\renewcommand{\today}{\ifcase \month \or January\or February\or March\or April\or May \or June\or July\or August\or September\or October\or November\or December\fi, \number \year} 
\newcommand{\N}{\mathbb{N}}
\newcommand{\Z}{\mathbb{Z}} 
\newcommand{\Q}{\mathbb{Q}} 
\newcommand{\R}{\mathbb{R}}
\newcommand{\Ha}{\mathfrak{H}}
\newcommand{\astq}{\ast_q}
\newcommand{\negphantom}[1]{\settowidth{\dimen0}{#1}\hspace*{-\dimen0}}
\newcommand\restr[2]{{\left.\kern-\nulldelimiterspace #1 \right|_{#2}}}
\newcommand{\bbracket}[1]{\begin{bmatrix}#1\end{bmatrix}}
\newcommand{\fgen}[3]{\begin{vmatrix}#1\end{vmatrix}_{#2,#3}}
\newcommand{\sgen}[1]{\fgen{#1}{S}{R}}
\newcommand{\pgen}[1]{\begin{Vmatrix}#1\end{Vmatrix}_{S,R}}
\newcommand\vertarrowbox[3][3ex]{%
  \begin{array}[t]{@{}c@{}} #2 \\
  \left\uparrow\vcenter{\hrule height #1}\right.\kern-\nulldelimiterspace\\
  \makebox[0pt]{\scriptsize#3}
  \end{array}%
}
\newcommand{\diam}{\,\overline{\diamond}\,}
\newcommand{\shuf}{\,\overline{\shuffle}\,}
\newcommand{\astt}{\,\overline{\ast}\,}
\newcommand{\stuffle}{\ast}
\newcommand{\Li}{\operatorname{Li}}
\newcommand{\rank}{\operatorname{rank}}
\newcommand{\MD}{\mathcal{MD}}
\newcommand{\qMZ}{\operatorname{q}\!\mathcal{MZ}}
\newcommand{\BD}{\mathcal{BD}}
\newcommand{\s}{\textbf{s}}
\newcommand{\del}{\partial}
\newenvironment{changemargin}[2]{%
\begin{list}{}{%
\setlength{\topsep}{0pt}%
\setlength{\leftmargin}{#1}%
\setlength{\rightmargin}{#2}%
\setlength{\listparindent}{\parindent}%
\setlength{\itemindent}{\parindent}%
\setlength{\parsep}{\parskip}%
}%
\item[]}{\end{list}}
\newtheorem{theorem}{Theorem}[section]
\newtheorem*{conjecture*}{Conjecture}
\newtheorem{lemma}[theorem]{Lemma}
\newtheorem{proposition}[theorem]{Proposition}
\newtheorem{suspicion}{Suspicion}
\newtheorem{corollary}[theorem]{Corollary}
\title{Zeta}
\author{Abel Vleeshouwers (s4448588)}
\date{Fall 2019}
\begin{document}

% \maketitle
% Om te printen.
% \include{Print}
% Partitierelatie bij l=3.
% \include{Proberen/Vindingen}

% \include{Introductie}

% \include{Things}

% \include{Uitleg}

% \include{Partition}

% % Het bewijs van de stelling dat MD invariant is onder qd/dq.
% \include{Derivative}

% % Het bewijs van de stelling voor l=2 en S+R oneven, in de bi-bracket setting.
% \include{Oddweight}

% \include{ToDo}

% \include{Proberen/Uitproberen}

% \include{DimKer}

% De ~volgorde van de items die besproken worden.
% \include{Volgorde}

% \include{qZetabrackets}

% =-=-=-=-=-=-=-=-=-=-=-=-=-=-=-=-=-=-=-=-=-=-=-=-=-=-=-=-=-=-=-=-=
% Volledige thesis!

\renewcommand{\abstractname}{}
\begin{titlepage}

\newcommand{\HRule}{\rule{\linewidth}{0.5mm}} % Defines a new command for the horizontal lines, change thickness here

\center % Center everything on the page
\begin{figure}[ht]
\begin{center}
\includegraphics[height=6cm]{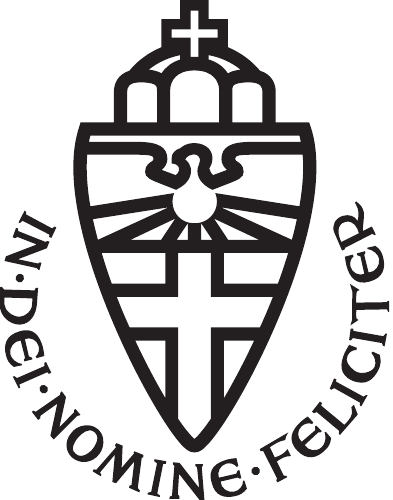}
\end{center}
\end{figure}
\smallskip
\textsc{\LARGE Radboud University}\\[0.3cm] % Name of your university/college
\textsc{\large Faculty of Science}\\[1cm]

\HRule \\[0.4cm]
{ \Huge   Multiple zeta values and their $q$-analogues}\\[0.4cm] % Title of your document
\HRule \\[1.5cm]
\textsc{\Large Master thesis in Mathematics}\\[0.5cm] % Major heading such as course name

\large
{\LARGE{Abel Vleeshouwers}}\\
% s4448588\\

\bigskip
\bigskip

\emph{Supervisor:\hphantom{blblasfgsfdasdfdfadsfdsafsadsafsflbalajlkabj}Second reader:}\\
\Large{prof. dr. Wadim Zudilin \hphantom{blblalbalajlkabj} dr. Maarten van Pruijssen}\\

\vfill
\bigskip
\bigskip

\bigskip
\bigskip

\bigskip

{\monthname \;\number\year}\\[3cm] % Date, change the \today to a set date if you want to be precise

\end{titlepage}
\thispagestyle{empty}
\clearpage

\hphantom{.}
\thispagestyle{empty}
\clearpage

\tableofcontents
\thispagestyle{empty}
% \pagenumbering{arabic}
\clearpage
% \section*{Acknowledgements}
% \addcontentsline{toc}{section}{Acknowledgements}
% \epigraph{\textit{Dikke huisjesslak,\\ ook jij beklimt de Fuji\\-- maar langzaam, langzaam.}}{--- Issa}
% Ik wil graag hen bedanken die me het jaar door gesteund hebben en er voor me waren. Zij hebben er voor gezorgd dat, zelfs in de tijden van \textit{afstand houden}, ik gelukkig en met tevredenheid deze scriptie heb mogen schrijven.\\

% % \noindent\textcolor{red}{Russian: Finally, I'd like to thank Wadim for his diverse and invaluable support.}
% \vspace{5cm}

\epigraph{\textit{Dikke huisjesslak,\\ ook jij beklimt de Fuji\\-- maar langzaam, langzaam.}}{--- Issa}
\section*{Acknowledgements}
\addcontentsline{toc}{section}{Acknowledgements}
Ik wil graag hen bedanken die me het jaar door gesteund hebben en er voor me waren. Zij hebben er voor gezorgd dat, zelfs in de tijden van \textit{afstand houden}, ik gelukkig en met tevredenheid deze scriptie heb mogen schrijven.\\

\noindent Verder wil ik Isabelle en Maarten bedanken voor het proeflezen van de scriptie en voor hun feedback.\\

\noindent Ich m\"ochte auch Henrik danken f\"ur seine Suggestionen und Hilfe.

\vspace{1cm}
\noindent  \foreignlanguage{russian}{И, наконец, я бы хотел
поблагодарить Вадима за всестороннюю и неоценимую поддержку.}
\begin{center}\section*{Abstract}\end{center}
\addcontentsline{toc}{section}{Abstract}

\begin{changemargin}{1.1cm}{1.1cm}
% \subsubsection{Corrected}

\noindent We explore the theory of multiple zeta values (MZVs) and some of their $q$-$\negphantom{ }$ generalisations. Multiple zeta values are numerical quantities that satisfy several combinatorial relations over the rationals. These relations include two multiplicative relations, which arise naturally from comparison of the MZVs with an underlying algebraic structure. We generalise these concepts by introducing the parameter $q$ in such a way that as $q\to 1^-$ we return to the ordinary MZVs. Our special interest lies in two $q$-models recently introduced by H. Bachmann. He further conjectures that the $\Q$-spaces generated by these $q$-generalisations coincide. In this thesis we establish a particular case of Bachmann's conjecture.\\

\end{changemargin}

% \noindent We explore the theory of multiple zeta values (MZVs) and some of their $q$-$\negphantom{ }$ generalisations. The multiple zeta values are numerical quantities that satisfy several combinatorial relations over the rationals. These relations include two multiplicative relations, and arise naturally from comparison of the MZVs with an underlying algebraic structure. We generalise these concepts by introducing the parameter $q$ in such a way that as $q\to 1^-$ we return to the ordinary MZVs. Our special interest lies in two $q$-models recently introduced by H. Bachmann. He further conjectures that the $\Q$-spaces generated by these $q$-generalisations coincide. In this thesis we establish a particular case of Bachmann's conjecture.
\section{Multiple zeta values}

% \begin{enumerate}
%     \item Historical introduction of single zeta values
%     \item $\zeta(2n)=\frac{(-1)^{n+1}B_{2n}(2\pi)^{2n}}{2(2n)!}$.
% \end{enumerate}
The Riemann zeta function has always been concept of great study. It is defined as the series
\begin{equation*}
    \zeta(s)=\sum_{n=1}^\infty\frac{1}{n^s}
\end{equation*}
for complex $s$. The series is convergent whenever $\operatorname{Re}(s)>1$. The zeta function plays an important role in number theory, due to its connection to prime numbers
\begin{equation*}
    \zeta(s)=\prod_{p \text{ prime}}\frac{1}{1-p^{-s}}.
\end{equation*}
When we consider the integer values $\zeta(m)$ of the Riemann zeta function for $m$ positive. Some of these values are well-known constants, such as
\begin{align*}
    \zeta(2)=\frac{1}{1^2}+\frac{1}{2^2}+\frac{1}{3^2}+\frac{1}{4^2}+\dots=\frac{\pi^2}{6}
\end{align*}
and Ap\'ery's constant
\begin{equation*}
    \zeta(3)=\frac{1}{1^3}+\frac{1}{2^3}+\frac{1}{3^3}+\frac{1}{4^3}+\dots\approx 1.202056903\dots\;.
\end{equation*}
Generally, more is known for even zeta values $\zeta(2m)$. For example, we have the closed representation
\begin{equation*}
    \zeta(2m)=\frac{(-1)^{m+1}B_{2m}(2\pi)^{2m}}{2(2m)!},
\end{equation*}
where the Bernoulli numbers $B_{k}$ are defined as the coefficients of the generating function of $\frac{x}{e^x-1}=\sum_{k=0}^\infty \frac{B_k}{k!}x^k$.\\

In the $1990$s, Hoffmann and Zagier introduced a multi-variable variant of the Riemann zeta function. This series is called the \textit{multiple zeta function} and is defined for a multi-index $\s=(s_1,\dots,s_l)$ of complex numbers with $\operatorname{Re}(s_1)>1$ and $\operatorname{Re}(s_2),\dots,\operatorname{Re}(s_l)>0$. It is defined as the series
\begin{align*}
    \zeta(\s)=\zeta(s_1,\dots,s_l)&\coloneqq\sum_{n_1>n_2>\dots>n_l>0}\frac{1}{n_1^{s_1}n_2^{s_2}\cdots n_l^{s_l}}.
    % &=\sum_{n_l=1}^\infty\;\sum_{n_{l-1}=n_l+1}^\infty\cdots\sum_{n_2=n_3+1}^\infty\;\sum_{n_1=n_2+1}^\infty\frac{1}{n_1^{s_1}n_2^{s_2}\cdots n_l^{s_l}}.
\end{align*}
The values at integral points $(s_1,\dots,s_l)$ are called \textit{multiple zeta values}, or \textit{MZVs}. In this thesis we will only be interested in these multiple zeta values. We call a multi-index $\s=(s_1,\dots,s_l)$ \textit{admissible} if $s_1>1$ and $s_j>0$ for $2\leq j\leq l$. The \textit{length} or \textit{depth} of a multiple zeta value $\zeta(s_1,\dots,s_l)$ is the integer $l$, and its \textit{weight} is the value $s_1+\dots+s_l$.

\begin{lemma}
For integers $s_1>1$ and $s_2,\dots,s_l>0$, the series
\begin{equation*}
    \sum_{n_1>\dots>n_l>0}\frac{1}{n_1^{s_1}\cdots n_l^{s_l}},
\end{equation*}
defining the multiple zeta values, converges (absolutely).
\end{lemma}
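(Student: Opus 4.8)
Since every term of the series is positive, absolute convergence coincides with ordinary convergence, so it suffices to produce a finite upper bound for the partial sums. The plan is to induct on the length $l$, peeling off the outermost summation variable $n_1$ and comparing the resulting one-variable sum to an integral. The structural observation that must be maintained through the induction is that admissibility (leading exponent $>1$, all other exponents $\geq 1$) is preserved when $n_1$ is summed out. For the base case $l=1$ the series is $\zeta(s_1)$ with $s_1\geq 2$, whose convergence is classical (e.g.\ by comparison with $\sum n^{-2}$).

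For the inductive step I would fix $n_2>\dots>n_l>0$ and sum over all integers $n_1>n_2$. Because $x\mapsto x^{-s_1}$ is decreasing and $s_1>1$,
\begin{equation*}
\sum_{n_1>n_2}\frac{1}{n_1^{s_1}}\leq\int_{n_2}^\infty\frac{dx}{x^{s_1}}=\frac{n_2^{1-s_1}}{s_1-1}\leq n_2^{1-s_1},
\end{equation*}
the last inequality using $s_1-1\geq 1$. Substituting this bound term-by-term (legitimate since all summands are nonnegative) gives
\begin{equation*}
\sum_{n_1>\dots>n_l>0}\frac{1}{n_1^{s_1}\cdots n_l^{s_l}}\leq\sum_{n_2>\dots>n_l>0}\frac{1}{n_2^{\,s_1-1+s_2}\,n_3^{s_3}\cdots n_l^{s_l}}.
\end{equation*}
The right-hand side is again a multiple zeta series, now of length $l-1$, with leading exponent $s_1-1+s_2\geq 2-1+1=2>1$ and all remaining exponents still $\geq 1$; that is, it is admissible. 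By the induction hypothesis it converges, bounding the original series and completing the step.

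The only point requiring care is the justification of the term-by-term replacement, and this is precisely where positivity is essential: it lets us rearrange the sums freely (Tonelli), so that summing out $n_1$ first is valid even before convergence of the full series is known. I expect no serious obstacle beyond bookkeeping; the crux is simply recognising that the integral comparison lowers the leading exponent by exactly $s_1-1\geq 1$, which is exactly enough to keep the shortened index admissible and thereby close the induction. An alternative route would bound the inner sums by elementary symmetric expressions, reducing to the extreme case $(2,1,\dots,1)$ and then estimating $\sum_{n}(\log n)^{l-1}n^{-2}<\infty$, but the length induction above is cleaner and avoids logarithmic factors entirely.
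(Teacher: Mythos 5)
Your proof is correct and follows essentially the same route as the paper: both peel off the largest variable $n_1$ using the tail estimate $\sum_{n>M}n^{-s}\leq\frac{1}{(s-1)M^{s-1}}$ and observe that the transferred weight makes the new leading exponent $s_1+s_2-1>1$, preserving admissibility. The only cosmetic difference is that you invoke an induction hypothesis of bare convergence at length $l-1$, while the paper iterates the same bound all the way down to obtain an explicit quantitative tail estimate.
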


\noindent\textit{Proof.} If $l=1$, the series is the ordinary Riemann zeta function $\zeta(s)=\sum_{n=1}^\infty 1/n^s$, which converges whenever $s>1$. Now suppose the statement is true for some integer $l$. Then, by this assumption, the quantity $\sum_{n_1>\dots>n_l>M}\frac{1}{n_1^{s_1}\cdots n_l^{s_l}}$ converges. After repeated use of the inequality
\begin{equation*}
    \sum_{n>M}\frac{1}{n^s}\leq \frac{1}{(s-1)M^{s-1}}
\end{equation*}
for $s>1$, and the notion that $s_1+\dots+s_i-i>1$, we obtain
\begin{align*}
    \sum_{n_1>n_2>\dots>n_l>M}\frac{1}{n_1^{s_1}\cdots n_l^{s_l}}&=\sum_{n_l>M}^\infty\frac{1}{n_l^{s_l}}\left(\sum_{n_{l-1}>n_l}^\infty\frac{1}{n_{l-1}^{s_{l-1}}}\left(\cdots\sum_{n_2>n_3}^\infty\frac{1}{n_2^{s_2}}\left(\sum_{n_1>n_2}^\infty\frac{1}{n_1^{s_1}}\right)\right)\cdots\right)\\
    &\leq\sum_{n_l>M}^\infty\frac{1}{n_l^{s_l}}\left(\sum_{n_{l-1}>n_l}^\infty\frac{1}{n_{l-1}^{s_{l-1}}}\left(\cdots\sum_{n_2>n_3}^\infty\frac{1}{n_2^{s_2}}\cdot\frac{1}{(s_1-1)n_2^{s_1-1}}\right)\cdots\right)\\
    &=\frac{1}{s_1-1}\sum_{n_l>M}^\infty\frac{1}{n_l^{s_l}}\left(\sum_{n_{l-1}>n_l}^\infty\frac{1}{n_{l-1}^{s_{l-1}}}\left(\cdots\sum_{n_2>n_3}^\infty\frac{1}{n_2^{s_1+s_2-1}}\right)\cdots\right)\\
    &\hphantom{hello}\vdots\\
    &\leq\frac{1}{(s_1-1)(s_1+s_2-2)\cdots(s_1+\dots+s_l-l)M^{s_1+\dots+s_l-l+1}}.
\end{align*}
But then, because $s_1+\dots+s_l-l>1$, the series
\begin{equation*}
    \zeta(s_1,\dots,s_l,s_{l+1})\leq \frac{1}{(s_1-1)(s_1+s_2-2)\cdots(s_1+\dots+s_l-l)}\sum_{n_{l+1}=1}^\infty \frac{1}{n_{l+1}^{s_1+\dots+s_l+s_{l+1}-l}}
\end{equation*}
converges.\qed\\

\noindent\textbf{Example.} The multiple zeta values of weight $3$ and $4$ are given by
\begin{align*}
    \zeta(2,1)&=1.202056903159594\ldots=\zeta(3)\\
\intertext{and}
    \zeta(2,1,1)&=1.082323233711138\ldots=\zeta(4),\\
    \zeta(3,1)&=0.270580808427785\ldots,\\
    \zeta(2,2)&=0.811742425283353\ldots.
\end{align*}

This example already shows that there exist some apparent relations between the multiple zeta values (note that we also seem to have $\zeta(4)=4\zeta(3,1)=\frac{4}{3}\zeta(2,2)$). The non-obvious identity
\begin{equation*}
    \zeta(3)=\zeta(2,1)
\end{equation*}
was already known by Euler. It turns out that there is an abundance of these relations. We will devote the upcoming section to give some examples of more generic identities between multiple zeta values.

\subsection{Examples}
\label{subsection:examplesmzvtheorems}
\begin{theorem}[Euler]
\label{eulertheorem}
For an integer $s\geq 3$ we have
\begin{equation*}
    \zeta(s)=\sum_{j=2}^{s-1}\zeta(j,s-j).
\end{equation*}
\end{theorem}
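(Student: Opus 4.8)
The plan is to expand the right-hand side as an iterated double sum, collapse the sum over $j$ with a finite geometric series, and then recognise a telescoping of harmonic numbers. First I would write, directly from the definition,
\[
\sum_{j=2}^{s-1}\zeta(j,s-j)=\sum_{j=2}^{s-1}\sum_{m>n\ge 1}\frac{1}{m^{j}n^{s-j}}=\sum_{m>n\ge 1}\sum_{j=2}^{s-1}\frac{1}{m^{j}n^{s-j}},
\]
where the interchange of the two summations is permitted because each of the series $\zeta(j,s-j)$ is admissible (here $j\ge 2$ and $s-j\ge 1$) and hence converges absolutely by the preceding Lemma. For fixed $m>n$ the inner sum is a finite geometric series in the ratio $n/m$, which I would evaluate in closed form as
\[
\sum_{j=2}^{s-1}\frac{1}{m^{j}n^{s-j}}=\frac{1}{m\,n^{s-2}(m-n)}-\frac{1}{m^{s-1}(m-n)}.
\]

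Next I would treat the two resulting pieces separately, after applying the partial fraction $\tfrac{1}{m\,n^{s-2}(m-n)}=\tfrac{1}{n^{s-1}}\!\left(\tfrac{1}{m-n}-\tfrac{1}{m}\right)$ to the first one. Summing the first piece over all $m>n$ with $n$ fixed (writing $m=n+l$) telescopes, since $\sum_{l\ge 1}\!\left(\tfrac1l-\tfrac{1}{n+l}\right)=H_n$, the $n$-th harmonic number $H_n=\sum_{k=1}^{n}1/k$; this contributes $\sum_{n\ge 1}H_n/n^{s-1}$. For the second piece I would instead fix $m$ and sum over $n$ from $1$ to $m-1$, using $\sum_{n=1}^{m-1}\tfrac{1}{m-n}=H_{m-1}$, which contributes $\sum_{m\ge 1}H_{m-1}/m^{s-1}$ (with $H_0=0$). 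Combining,
\[
\sum_{j=2}^{s-1}\zeta(j,s-j)=\sum_{n\ge 1}\frac{H_n}{n^{s-1}}-\sum_{n\ge 1}\frac{H_{n-1}}{n^{s-1}}=\sum_{n\ge 1}\frac{H_n-H_{n-1}}{n^{s-1}}=\sum_{n\ge 1}\frac{1}{n^{s}}=\zeta(s),
\]
where the final collapse uses $H_n-H_{n-1}=1/n$.

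The delicate point, and the step I would be most careful about, is the regrouping in the middle: the partial-fractioned terms $\tfrac{1}{n^{s-1}(m-n)}$ and $\tfrac{1}{m n^{s-1}}$ diverge individually when summed over the region $m>n$, so I must keep them paired as $\tfrac{1}{n^{s-1}}(\tfrac{1}{m-n}-\tfrac1m)$. Written that way, each of the two pieces is a series of \emph{positive} terms that converges (to $\sum_n H_n/n^{s-1}$ and $\sum_m H_{m-1}/m^{s-1}$ respectively, both finite because $H_n=O(\log n)$ and $s-1\ge 2$), so the split of the absolutely convergent double sum into these two harmonic-number sums is fully justified. Beyond that bookkeeping, the real content is simply spotting the two telescopings — the geometric collapse over $j$, and the harmonic telescoping $H_n-H_{n-1}=1/n$ that converts the difference of the two sums back into $\zeta(s)$.
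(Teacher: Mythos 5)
Your proof is correct, but it takes a genuinely different route from the paper. The paper disposes of Euler's theorem in one line, as the case $l=2$ of the Sum theorem (Theorem \ref{sumtheorem}), which is itself deduced from Ohno's relations via duality of the dual index $(l+1)^\dagger=(2,1,\dots,1)$ — machinery that is only established later through $q$-connected sums. You instead give a direct, self-contained computation in the spirit of Euler's original argument: expand the right-hand side, collapse the sum over $j$ as a finite geometric series, partial-fraction, and recognise the two harmonic-number sums $\sum_n H_n/n^{s-1}$ and $\sum_n H_{n-1}/n^{s-1}$ whose difference telescopes to $\zeta(s)$. I checked the algebra: the closed form $\frac{1}{m\,n^{s-2}(m-n)}-\frac{1}{m^{s-1}(m-n)}$ for the inner geometric sum is right, both pieces are nonnegative termwise (since $m^{s-1}\ge m\,n^{s-2}$ for $m>n$, $s\ge 3$), and your care in keeping $\frac{1}{m-n}-\frac{1}{m}$ paired before summing is exactly the point that needs attention; with both pieces individually convergent the final separation is legitimate. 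What each approach buys: yours is elementary and requires nothing beyond the convergence lemma, so it could stand in Section \ref{subsection:examplesmzvtheorems} without forward references; the paper's derivation is essentially free once Ohno's relations are available and exhibits Euler's identity as one instance of a much larger family, at the cost of resting on results proved only in Section \ref{subsection:qmultiplezetavalues}.
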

\begin{proof}
The statement is a particular instance of Theorem \ref{sumtheorem}.
\end{proof}
The next theorem is a weighted version of Theorem \ref{eulertheorem}. A proof can be found in \cite[Thm. 2.3, p. 19]{notes}.
\begin{theorem}
For an integer $s\geq 3$ we have
\begin{equation*}
    (s+1)\zeta(s)=\sum_{j=2}^{s-1}2^j\zeta(j,s-j).
\end{equation*}
\end{theorem}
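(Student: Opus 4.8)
The plan is to pass to generating functions in the weight and reduce the claim to a single identity for the digamma function. Writing out the defining double series, one sees that
\begin{equation*}
\sum_{j=2}^{s-1}2^j\zeta(j,s-j)=2\,[u^{s-2}]\,\Psi(2u,u),\qquad \Psi(x,y)\coloneqq\sum_{a\ge2,\,b\ge1}\zeta(a,b)\,x^{a-1}y^{b-1},
\end{equation*}
so that the weights $2^j$ are produced precisely by the substitution $x=2u$, $y=u$. On the other side, from the single-variable series $\phi(u)\coloneqq\sum_{s\ge2}\zeta(s)u^{s-1}$ one gets $\sum_{s\ge3}(s+1)\zeta(s)u^{s-2}=u^{-2}\frac{d}{du}\big(u^2\phi(u)\big)-3\zeta(2)$, the last term merely correcting the inadmissible weight-two case. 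Thus it suffices to prove the generating-function identity $2\Psi(2u,u)=u^{-2}\frac{d}{du}\big(u^2\phi(u)\big)-3\zeta(2)$ as an identity of formal power series (equivalently, of analytic functions near $u=0$), and then to read off the coefficient of $u^{s-2}$.

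First I would put the two series into manageable form. Summing the geometric series $\sum_{a\ge2}x^{a-1}m^{-a}=x/(m(m-x))$ and $\sum_{b\ge1}y^{b-1}n^{-b}=1/(n-y)$ termwise gives
\begin{equation*}
\phi(u)=-\gamma-\psi(1-u),\qquad \Psi(x,y)=x\sum_{m>n\ge1}\frac{1}{m(m-x)(n-y)},
\end{equation*}
where $\psi=\Gamma'/\Gamma$ is the digamma function; the first of these is just the classical expansion $\psi(1-u)=-\gamma-\sum_{k\ge1}\zeta(k+1)u^k$. Applying the partial fraction $\frac{1}{m(m-x)}=\frac1x\big(\frac1{m-x}-\frac1m\big)$ and carrying out the inner sum over $m>n$ (which telescopes to $\psi(n+1)-\psi(n+1-x)$), I would express $\Psi(x,y)$ through values of $\psi$, reducing the whole problem to a functional identity among $\psi(1-x)$, $\psi(1-y)$ and the relevant partial sums.

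The key step — and the main obstacle — is the specialization $x=2u$, $y=u$: the doubling in the first variable has to be reconciled with the single variable $u$ on the right, and this is exactly where the duplication formula $\psi(2z)=\tfrac12\psi(z)+\tfrac12\psi\!\big(z+\tfrac12\big)+\log 2$ (equivalently the Legendre relation for $\Gamma$) enters and accounts for the powers $2^j$. I would expand both sides as power series in $u$ about the origin, using the Taylor coefficients of $\psi$ at $1$ and at $\tfrac12$ (the latter being responsible for the factors $2^k$), and verify that the coefficient of $u^{s-2}$ matches. Throughout, the rearrangements of the non-absolutely convergent double sums are to be understood as identities of formal power series valid in a neighbourhood of $u=0$, so that no analytic subtlety is swept under the rug. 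As a sanity check the cases $s=3,4$ reduce, via $\zeta(2,1)=\zeta(3)$ and the relations $\zeta(4)=4\zeta(3,1)=\tfrac43\zeta(2,2)$ from the preceding example, to the correct identities $4\zeta(3)=4\zeta(2,1)$ and $5\zeta(4)=4\zeta(2,2)+8\zeta(3,1)$.
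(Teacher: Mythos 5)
First, note that the paper itself does not prove this theorem: it only states it and refers to \cite{notes}, so there is no internal proof to compare against, and your attempt must stand on its own. Your preparatory reductions are all correct — the identification $\sum_{j=2}^{s-1}2^j\zeta(j,s-j)=2[u^{s-2}]\Psi(2u,u)$, the expansion $\phi(u)=-\gamma-\psi(1-u)$, and the representation $\Psi(x,y)=\sum_{n\ge1}\bigl(\psi(n+1)-\psi(n+1-x)\bigr)/(n-y)$ obtained from the partial fraction (the interchanges of summation are absolutely convergent for small $x,y$, so no regularisation issue arises). The difficulty is that everything after this point is a plan rather than a proof: the entire content of the theorem is the identity $2\Psi(2u,u)=u^{-2}\tfrac{d}{du}\bigl(u^2\phi(u)\bigr)-3\zeta(2)$, and you do not establish it — you only name the tool you expect to use. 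Moreover, the step does not close the way you describe. Applying $\psi(2z)=\tfrac12\psi(z)+\tfrac12\psi(z+\tfrac12)+\log 2$ gives
\begin{equation*}
\psi(n+1)-\psi(n+1-2u)=\tfrac12\Bigl(g\bigl(\tfrac{n+1}{2}\bigr)+g\bigl(\tfrac{n}{2}+1\bigr)\Bigr),\qquad g(z)=\psi(z)-\psi(z-u),
\end{equation*}
so that $2\Psi(2u,u)=\sum_{n\ge1}(n-u)^{-1}\bigl[g(\tfrac{n+1}{2})+g(\tfrac n2+1)\bigr]$ is still an \emph{infinite} sum of digamma differences at half-integer-shifted arguments. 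This cannot be evaluated merely by ``using the Taylor coefficients of $\psi$ at $1$ and at $\tfrac12$'': one still needs a genuinely nontrivial resummation (for instance Abel summation in integer steps, separately over even and odd $n$, using $g(z+1)-g(z)=-u/\bigl(z(z-u)\bigr)$, together with a second application of the duplication formula to reduce the boundary value $\psi(\tfrac12-u)$ back to $\psi(1-u)$ and $\psi(1-2u)$). None of this is indicated, and it is precisely the hard part of the argument. So the attempt has a genuine gap: the key identity is asserted to be verifiable, not verified.

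It is worth pointing out that a complete and much shorter proof is available from results already stated in this thesis. Sum Proposition \ref{shufflel=1} over all decompositions $a+b=s$ with $a,b\ge2$; since $\sum_{i=1}^{s-3}\binom{j-1}{i}=2^{j-1}-1-\delta_{j,s-1}$ for $2\le j\le s-1$, the coefficient of $\zeta(j,s-j)$ is $2^j-2-2\delta_{j,s-1}$. Summing Proposition \ref{stufflel=1} over the same range gives $2\sum_{j=2}^{s-2}\zeta(j,s-j)+(s-3)\zeta(s)$. Equating the two and applying Theorem \ref{sumtheorem} (to replace $\sum_{j=2}^{s-2}\zeta(j,s-j)$ by $\zeta(s)-\zeta(s-1,1)$ and $\sum_{j=2}^{s-1}2\zeta(j,s-j)$ by $2\zeta(s)$) makes the $\zeta(s-1,1)$ terms cancel and yields $(s+1)\zeta(s)=\sum_{j=2}^{s-1}2^j\zeta(j,s-j)$ directly, with the case $s=3$ being Euler's identity $\zeta(2,1)=\zeta(3)$. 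This route uses no analytic input beyond what the paper has already proved, whereas your digamma approach, even once completed, would require the duplication formula and delicate rearrangements of conditionally convergent sums.
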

Let $\s=(s_1,\dots,s_l)$ an admissible multi-index. We can rewrite $\s$ as
\begin{equation*}
    \s=(u_1+1,\underbrace{1,\dots,1}_{t_1-1\text{ times}},u_2+1,\underbrace{1,\dots,1}_{t_2-1\text{ times}},\dots,u_k+1,\underbrace{1,\dots,1}_{t_k-1\text{ times}}),
\end{equation*}
with integers $u_1,\dots,u_k,t_1,\dots,t_k>0$. We define the \textit{dual} of $\s$ to be the multi-index
\begin{equation*}
    \s^\dagger=(t_k+1,\underbrace{1,\dots,1}_{u_k-1\text{ times}},t_{k-1}+1,\underbrace{1,\dots,1}_{u_{k-1}-1\text{ times}},\dots,t_1+1,\underbrace{1,\dots,1}_{u_1-1\text{ times}}).
\end{equation*}
\noindent\textbf{Example.} The duals of the indices $\mathbf{r}=(4,1)$, $\s=(3,2,2)$ and $\mathbf{t}=(2,1,1)$ are given by
\begin{equation*}
    \mathbf{r}^\dagger=(3,1,1),\qquad\qquad\s^\dagger=(2,2,2,1),\qquad\qquad\mathbf{t}^\dagger=(4).
\end{equation*}
Furthermore, for every multi-index $\s$ we have
\begin{equation*}
    (\s^\dagger)^\dagger=\s.
\end{equation*}
\begin{theorem}[Ohno's relations]
\label{ohnosrelations}
Consider an admissible index $\s=(s_1,s_2,\dots,s_l)$, with its dual written as $\s^\dagger=(s'_1,s'_2,\dots, s'_m)$. Then for fixed integer $n\geq0$ we have
\begin{equation*}
    \sum_{\substack{e_1,\dots,e_l\geq 0\\e_1+\dots+e_l=n}}\zeta(s_1+e_1,s_2+e_2,\dots, s_l+e_l)=\sum_{\substack{e_1,\dots,e_m\geq 0\\e_1+\dots+e_m=n}}\zeta(s'_1+e_1,s'_2+e_2,\dots ,s'_m+e_m).
\end{equation*}
\end{theorem}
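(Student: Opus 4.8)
The plan is to deduce Ohno's relations from the duality $\zeta(\s)=\zeta(\s^\dagger)$ by promoting it to an identity of generating functions. First I would package the two sides into power series in an auxiliary variable $x$: set
\[
\Phi_\s(x)=\sum_{n\ge 0}x^n\sum_{\substack{e_1,\dots,e_l\ge0\\ e_1+\dots+e_l=n}}\zeta(s_1+e_1,\dots,s_l+e_l),
\]
and likewise $\Phi_{\s^\dagger}(x)$. Since the theorem asserts equality of the inner sums for each fixed $n$, it is equivalent (by comparing coefficients of $x^n$) to the single identity $\Phi_\s(x)=\Phi_{\s^\dagger}(x)$, valid as analytic functions on a small disc $|x|<1$. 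Interchanging the two summations and evaluating the geometric series $\sum_{e_i\ge0}(x/n_i)^{e_i}=n_i/(n_i-x)$ gives the closed form
\[
\Phi_\s(x)=\sum_{n_1>\dots>n_l>0}\prod_{i=1}^l\frac{1}{n_i^{s_i-1}(n_i-x)},
\]
so the problem becomes a symmetric-looking identity between two such nested sums (with $l$ resp.\ $m$ factors).

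Next I would realise $\Phi_\s(x)$ as an iterated integral over a simplex, generalising the standard representation in which the word $0^{s_1-1}1\cdots 0^{s_l-1}1$ is read off as a product of the forms $dt/t$ and $dt/(1-t)$. The aim is to twist these two forms by $x$ so that (i) extracting the coefficient of $x^n$ from the integral reproduces exactly the level-$n$ sum of MZVs with $n$ extra zeros distributed among the blocks, and (ii) the resulting $x$-dependent pair of forms is still swapped by the substitution $t\mapsto 1-t$. Granting such forms, the proof concludes as duality does: applying $t_j\mapsto 1-t_{w+1-j}$ reverses the simplex and interchanges the two forms, turning the word of $\s$ into the reverse-and-complement word of $\s^\dagger$ while preserving the twist, whence $\Phi_\s(x)=\Phi_{\s^\dagger}(x)$. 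Specialising to $x=0$ recovers ordinary duality, so Ohno's relations appear as the $x$-deformation of duality; they likewise contain the sum formula underlying Theorem~\ref{eulertheorem}.

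The hard part will be this second step: constructing the twisted forms so that the $x$-shift is distributed correctly. The naive attempt, multiplying $dt/(1-t)$ or $dt/t$ by $t^{-x}$, fails, because the exponent shift it introduces is carried upward through every subsequent $dt/t$ integration and therefore over-propagates: one obtains a factor $(n_i-x)$ for \emph{each} form in a block rather than the single factor $1/(n_i-x)$ per summation index demanded by the closed form above. The correct forms must inject exactly one $-x$ shift at each $\omega_1$ and cancel it before the transition to the next block, while staying symmetric under $t\mapsto 1-t$; pinning these down, together with justifying the interchange of summation and integration and convergence on $|x|<1$, is the technical core. A cleaner alternative that sidesteps the delicate forms is the connected-sum (transport) method: introduce a two-index sum $Z(\mathbf{k};\mathbf{l})$ joined by a Beta-type connector carrying the parameter $x$, prove a transport relation that moves one box at a time from the first index to the second without changing $Z$, and then empty $\s$ into $\s^\dagger$. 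I would fall back on this if the explicit integral proved too unwieldy.
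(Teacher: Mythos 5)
Your opening reduction is correct and coincides with what the paper actually does: packaging the level-$n$ sums into $\Phi_\s(x)$ and computing $\sum_{e\ge0}(x/n_i)^{e}\,n_i^{-s_i}=1/\bigl(n_i^{s_i-1}(n_i-x)\bigr)$ is exactly the expansion carried out in the proof of Theorem~\ref{qsumeduality} (there in $q$-deformed form, with $1/([n_i]-q^{n_i}x)$ in place of $1/(n_i-x)$). However, your \emph{primary} route --- realising $\Phi_\s(x)$ as an iterated integral with $x$-twisted forms swapped by $t\mapsto 1-t$ --- is left with a genuine unfilled hole: you correctly diagnose why the naive twist $t^{-x}\,dt/t$ over-propagates the shift, but you never produce forms that inject exactly one factor $1/(n_i-x)$ per summation index while remaining symmetric under $t\mapsto 1-t$, so as written that branch does not close. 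What saves the proposal is your designated fallback, which is precisely the paper's method (following Seki--Yamamoto): the paper defines the connected sum $Z_q(\s;\mathbf{t};x)$ with the connector
\begin{equation*}
\frac{q^{n_1m_1}f_q(n_1;x)\,f_q(m_1;x)}{f_q(n_1+m_1;x)},\qquad f_q(n;x)=\prod_{j=1}^{n}\bigl([j]-q^jx\bigr),
\end{equation*}
whose $q\to1^-$ limit is the Beta-type connector $\Gamma(n+1-x)\Gamma(m+1-x)/\bigl(\Gamma(1-x)\Gamma(n+m+1-x)\bigr)$ you allude to, proves the one-box transport relation by telescoping (Theorem~\ref{dualityqx}), empties $\s$ into $\s^\dagger$, extracts the coefficient of $x^c$ to get $S_q(\s;c)=S_q(\s^\dagger;c)$, and only then lets $q\to1^-$. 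So if you commit to the fallback and combine it with your generating-function reduction, you have the paper's proof (modulo the choice of working directly at $q=1$ versus deforming by $q$ first, which costs nothing here); if you insist on the twisted-integral route, the technical core you flag as "the hard part" is a real gap that must still be filled.
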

We will prove this result at a later stage, in Section \ref{subsection:qmultiplezetavalues}. When we apply Ohno's relations to the smallest case $n=0$, the identity we obtain between MZVs is known as \textit{duality}.
\begin{theorem}[Duality]
\label{duality}
For an admissible index $\s=(s_1,\dots,s_l)$, the duality
\begin{equation*}
    \zeta(\s)=\zeta(\s^\dagger)
\end{equation*}
holds.
\end{theorem}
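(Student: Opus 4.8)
The plan is to realise each admissible multiple zeta value as an iterated integral and then exploit a simple change of variables that interchanges the two underlying differential forms while reversing their order; this reversal will turn out to be exactly the combinatorial operation $\s\mapsto\s^\dagger$.

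First I would introduce the two one-forms $\omega_0=\frac{dt}{t}$ and $\omega_1=\frac{dt}{1-t}$ on $(0,1)$ and establish the representation
\[
\zeta(s_1,\dots,s_l)=\int_{1>t_1>\dots>t_k>0}\omega_{\epsilon_1}(t_1)\cdots\omega_{\epsilon_k}(t_k),
\]
where $k=s_1+\dots+s_l$ is the weight and the word $\epsilon_1\cdots\epsilon_k$ is obtained by replacing each entry $s_i$ with the block $\underbrace{0\cdots0}_{s_i-1}1$. This representation is proved by expanding $\omega_1=\sum_{n\ge1}t^{n-1}\,dt$ as a geometric series and integrating the innermost variable repeatedly; admissibility (namely $s_1>1$, so the word begins with $0$, and each index segment ends in $1$, so the word ends with $1$) is precisely the condition guaranteeing that the integral converges and matches the series of Lemma~1.

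Next I would rewrite $\s$ in the block form of the definition of the dual. An entry $u_i+1$ contributes $0^{u_i}1$, and each of the following $t_i-1$ ones contributes a single letter $1$, so the word attached to $\s$ is $0^{u_1}1^{t_1}0^{u_2}1^{t_2}\cdots 0^{u_k}1^{t_k}$. I would then apply the substitution $t_j\mapsto 1-t_{k+1-j}$. This reverses the order of the simplex of integration and sends $\omega_0\leftrightarrow\omega_1$, so that the word is replaced by its reversal with every letter complemented, namely $0^{t_k}1^{u_k}\cdots 0^{t_1}1^{u_1}$. Reading this word back as an index yields $(t_k+1,1^{u_k-1},t_{k-1}+1,1^{u_{k-1}-1},\dots,t_1+1,1^{u_1-1})$, which is exactly $\s^\dagger$; hence the two integrals coincide and $\zeta(\s)=\zeta(\s^\dagger)$.

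The main obstacle I anticipate is the honest verification of the iterated-integral representation itself: tracking the nested sums produced when the geometric series are integrated term by term, and confirming that the admissibility hypotheses are exactly what licenses the interchange of summation and integration near the boundary. Once that representation is secured, the change of variables and the bookkeeping identifying the transformed word with $\s^\dagger$ are routine. (Alternatively, since Ohno's relations are already recorded as Theorem~\ref{ohnosrelations}, duality follows immediately by setting $n=0$, for then the only admissible exponent tuple is $e_1=\dots=e_l=0$; this shortcut, however, merely defers the work to the proof of Ohno's relations.)
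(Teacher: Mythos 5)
Your proof is correct, but it is not the route the paper takes; in fact the paper explicitly sets aside the integral-representation argument (``Originally, Theorem~\ref{dualitymzv} was proven by a change of variable in some integral representation\dots'') in favour of the connected-sum proof of Seki and Yamamoto. In the paper one introduces the series $Z(\s;\mathbf{t})$ carrying the connector $\frac{n_1!m_1!}{(n_1+m_1)!}$, proves the telescoping identity of Lemma~\ref{factoriallemma}, and uses it to move entries one unit at a time from the left index to the right, so that $Z(\s;\varnothing)$ is transported step by step into $Z(\varnothing;\s^\dagger)$; duality falls out at the end of this chain of equalities. Your argument instead realises $\zeta(\s)$ as a Chen iterated integral over the simplex $1>t_1>\dots>t_k>0$ and applies $t_j\mapsto 1-t_{k+1-j}$, which swaps $\omega_0\leftrightarrow\omega_1$ and reverses the word; your bookkeeping identifying the transformed word $x_0^{t_k}x_1^{u_k}\cdots x_0^{t_1}x_1^{u_1}$ with $\s^\dagger$ is accurate, and your observation that admissibility ($\epsilon_1=0$, $\epsilon_k=1$) is exactly the convergence condition for the integral is the right point to worry about. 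The trade-off is worth noting: your approach requires setting up the integral representation (which the paper never does, since it builds the shuffle product from polylogarithms rather than from integrals), whereas the connected-sum method is a pure series manipulation with no boundary-convergence issues, and --- decisively for this thesis --- it deforms to the $q$-setting essentially verbatim (Theorem~\ref{dualityqx}), which is how the paper then obtains Ohno's relations; the naive $t\mapsto 1-t$ substitution has no such $q$-analogue. Your parenthetical shortcut via Ohno's relations with $n=0$ would be circular here, since the paper derives Ohno's relations from the $q$-connected-sum duality, not the other way around.
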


\begin{theorem}[Sum theorem]
\label{sumtheorem}
For fixed integers $S>1$ and $l>1$ we have
\begin{equation*}
    \sum_{\substack{s_1>1,\;s_2,\dots,s_l>0\\s_1+\dots+s_l=S}}\zeta(s_1,\dots,s_l)=\zeta(S).
\end{equation*}
\end{theorem}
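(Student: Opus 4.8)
The plan is to derive the Sum Theorem as a special case of Ohno's relations (Theorem \ref{ohnosrelations}), which I am allowed to assume. The key idea is to choose a single admissible index whose dual has depth one: then the right-hand side of Ohno's identity collapses to a single ordinary zeta value, while the left-hand side expands, after a shift of indices, into precisely the sum I want.

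First I would take the index $\s=(2,1,\dots,1)$ of depth $l$, consisting of one $2$ followed by $l-1$ ones; this is admissible and has weight $l+1$. Using the dual construction from the excerpt, I write $\s=(u_1+1,\underbrace{1,\dots,1}_{t_1-1})$ with $u_1=1$ and $t_1=l$, so that $k=1$; the formula then gives $\s^\dagger=(t_1+1)=(l+1)$, a single index of depth $m=1$. The two bookkeeping facts to verify carefully are exactly this dual computation and the shift bijection below.

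Next I would apply Ohno's relations to this $\s$ with the parameter $n=S-l-1\geq 0$. On the right-hand side, since $m=1$, the only summand is $\zeta\bigl((l+1)+n\bigr)=\zeta(S)$. On the left-hand side I would set up the map $(e_1,\dots,e_l)\mapsto(s_1,\dots,s_l):=(2+e_1,\,1+e_2,\dots,1+e_l)$. Under this correspondence the constraints $e_i\geq 0$ with $e_1+\dots+e_l=n$ translate into $s_1>1$, $s_2,\dots,s_l>0$ with $s_1+\dots+s_l=(l+1)+n=S$; that is, the left-hand sum ranges over every admissible index of weight $S$ and depth $l$ exactly once. Equating the two sides of Ohno's identity then yields $\sum_{s_1>1,\,s_2,\dots,s_l>0,\,s_1+\dots+s_l=S}\zeta(s_1,\dots,s_l)=\zeta(S)$, as claimed. (As a sanity check, the case $l=2$ recovers Euler's Theorem \ref{eulertheorem}.)

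Since Ohno's relations carry out all of the analytic content, there is no genuine obstacle: the argument reduces to selecting the correct index and checking the two combinatorial identities. The only subtlety worth flagging is the admissible range of $S$: the displayed sum is nonempty only for $S\geq l+1$, which is exactly the condition $n\geq 0$ under which Ohno's identity may legitimately be invoked.
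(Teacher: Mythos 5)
Your proof is correct and is essentially the paper's own argument: the paper applies Ohno's relations with $n=S-l-1$ to the single-entry index $\s=(l+1)$, whose dual is $(2,1,\dots,1)$, whereas you start from $(2,1,\dots,1)$ and dualise to $(l+1)$ — but since Ohno's identity is symmetric in $\s$ and $\s^\dagger$, this is the same computation read in the opposite direction. Your bookkeeping of the dual and of the shift bijection $(e_1,\dots,e_l)\mapsto(2+e_1,1+e_2,\dots,1+e_l)$ is accurate.
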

\begin{proof}
For fixed $S,l>1$ we consider the single-entry index $\s=(l+1)$. Its dual is then given by 
\begin{equation*}
    \s^\dagger=(2,\underbrace{1,1,\cdots,1}_{l-1 \text{ times}}).
\end{equation*}
Applying Ohno's relations with $n=S-l-1$ to $\s$, we will then obtain
\begin{align*}
    \zeta(S)&=\sum_{\substack{e_1,\dots,e_l\geq 0\\e_1+\dots+e_l=n}}\zeta(2+e_1,1+e_2\cdots ,1+e_l)\\
    &=\sum_{\substack{s_1>1,s_1,\dots,s_l>0\\s_1+\dots+s_l=n+l+1=S}}\zeta(s_1,\dots,s_l).\qedhere
\end{align*}
\end{proof}
In particular the Sum theorem states that the sum of all MZVs of fixed weight and length is constant, and only depends on its weight. If $l=2$, the Sum theorem is precisely Euler's theorem.
\begin{theorem}[Hoffman's relations]
\label{hoffmansrelations}
For an admissible index $\s=(s_1,\dots,s_l)$ we have
\begin{align*}
    \sum_{k=1}^l\zeta(s_1,\dots,s_{k-1},s_k+1,s_{k+1},\dots,s_l)=\sum_{\substack{k=1\\s_k\geq 2}}^l\sum_{j=0}^{s_k-2}\zeta(s_1,\dots,s_{k-1},s_k-j,j+1,s_{k+1},\dots,s_l).
\end{align*}
\end{theorem}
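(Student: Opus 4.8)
The plan is to deduce Hoffman's relations from the two product structures on multiple zeta values, applied to the divergent value $\zeta(1)$. I encode the admissible index $\s=(s_1,\dots,s_l)$ as the word $w=z_{s_1}z_{s_2}\cdots z_{s_l}\in\Ha^0$, where $z_s=x^{s-1}y$ and $\zeta(w)=\zeta(\s)$, and I write $y=z_1$ for the letter carrying $\zeta(1)$. Both the harmonic (stuffle) product $z_1\ast w$ and the shuffle product $z_1\shuffle w$ formally represent $\zeta(1)\zeta(\s)$, so $\zeta$ ought to annihilate their difference; the whole proof consists of making this precise and of computing what the two products actually are. Concretely I aim to establish the single word identity
\begin{equation*}
z_1\ast w-z_1\shuffle w=\sum_{k=1}^l z_{s_1}\cdots z_{s_k+1}\cdots z_{s_l}-\sum_{\substack{k=1\\ s_k\ge 2}}^l\sum_{j=0}^{s_k-2}z_{s_1}\cdots z_{s_k-j}z_{j+1}\cdots z_{s_l}
\end{equation*}
in $\Ha^0$, whose two sums are exactly the left- and right-hand sides of the theorem; applying $\zeta$ and showing the left member vanishes then finishes the argument.

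First I would expand the harmonic product with the stuffle recursion. This yields the \emph{merged} words $z_{s_1}\cdots z_{s_k+1}\cdots z_{s_l}$, obtained by absorbing $z_1$ into a single letter --- these are precisely the left-hand summands of the theorem --- together with \emph{insertion} words obtained by placing a separate $z_1$ into a gap of $w$. Then I would compute $z_1\shuffle w=y\shuffle w$, which is the sum over all ways of inserting one extra $y$ into the letter string of $w$, counted with multiplicity. Inserting the $y$ inside a block $x^{s_k-1}y$ splits the entry $s_k$ into a pair $(s_k-j,\,j+1)$ and produces exactly the right-hand summands; inserting it against an existing $y$ reproduces the separate-$z_1$ insertion words already seen in the stuffle expansion.

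The key structural point is that the only non-admissible word appearing in either product is $z_1w$ (the index $(1,s_1,\dots,s_l)$), and it occurs with coefficient $1$ in both $z_1\ast w$ and $z_1\shuffle w$; hence their difference already lies in $\Ha^0$. To see that $\zeta$ kills this difference I would pass to the regularisations $\zeta^\ast,\zeta^\shuffle\colon\Ha^1\to\R[T]$, which send $z_1\mapsto T$, restrict to $\zeta$ on $\Ha^0$, and satisfy $\zeta^\ast(z_1\ast w)=T\,\zeta(\s)=\zeta^\shuffle(z_1\shuffle w)$. Because $z_1w$ carries a single leading $1$, the polynomial $\zeta^\ast(z_1w)$ has degree $\le 1$ in $T$, and the Ihara--Kaneko--Zagier comparison map $\rho$ with $\zeta^\shuffle=\rho\circ\zeta^\ast$ fixes $1$ and $T$; thus $\zeta^\shuffle(z_1w)=\zeta^\ast(z_1w)$. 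Subtracting the two identities above after isolating the common $z_1w$ contribution gives $\zeta(z_1\ast w-z_1\shuffle w)=0$, which is the asserted relation.

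I expect the main obstacle to be the purely combinatorial verification of the displayed word identity, i.e.\ matching the insertion terms of the two products. The block-splitting shuffle words directly account for the interior summands $(s_k-j,j+1)$ with $j\ge 1$, but the boundary summands $(s_k,1)$ and the cancellation of the stuffle insertions hinge on the \emph{multiplicities} of the separate-$z_1$ words, and these behave subtly exactly when some $s_i=1$: inserting a $1$ next to an existing $1$ collapses several insertion positions into one word, so the naive count fails and the stuffle and shuffle coefficients must be compared entry by entry. Tracking these coincidences carefully --- for instance by induction on the length $l$, peeling off the last letter $z_{s_l}$ and using the recursions for both products --- is what turns the unrestricted sum over all insertions into the restricted sum $\sum_{k:\,s_k\ge2}$ of the theorem.
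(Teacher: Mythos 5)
Your proof is correct, but it takes a genuinely different route from the paper. The paper obtains Hoffman's relations as the $n=1$ case of Ohno's relations (Theorem \ref{ohnosrelations}): the left-hand side is immediate, and the right-hand side is produced by applying duality (Theorem \ref{duality}) to each term $\zeta(s'_1,\dots,s'_k+1,\dots,s'_m)$ of the dual index and carefully tracking, block by block, how incrementing a head entry $t_i+1$ versus one of the trailing $1$'s transforms back under $\dagger$ --- the former accounting for $j=0$ and the latter for $1\le j\le s_k-2$. You instead compute $x_1\ast w$ and $x_1\shuffle w$ explicitly and observe that the gap-insertion words occur identically (position by position, hence with matching multiplicities) in both products, so that the difference is exactly $\sum_k(\text{merged terms})-\sum_{k,j}(\text{split terms})$, and then you invoke $\zeta(x_1\shuffle w-x_1\ast w)=0$; this is the classical double-shuffle derivation, and the vanishing statement you re-derive via the regularisations $\zeta^\ast,\zeta^\shuffle$ and the comparison map is precisely Theorem \ref{shufflestufflecombi}, which the paper states but does not prove (it cites \cite{notes}). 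The trade-off: your argument is combinatorially self-contained and shorter once Theorem \ref{shufflestufflecombi} is granted, but within this thesis it rests on an unproved ingredient, whereas the paper's route rests on Ohno's relations, which are proved later in Section \ref{subsection:qmultiplezetavalues} via $q$-connected sums. Your worry about multiplicities when some $s_i=1$ is resolved more cleanly than you suggest: both products enumerate the same $l+1$ insertion positions (the gaps between the blocks $x_0^{s_k-1}x_1$, including the two ends), so the formal sums of insertion words coincide termwise and cancel exactly, with no entry-by-entry coefficient comparison needed.
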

\begin{proof}
Let us consider Ohno's relations with $n=1$. Then the left-hand side of Ohno's relations becomes the left-hand side of our statement. Let us write 
\begin{equation*}
    \s=(s_1,\dots,s_l)=(u_1+1,\underbrace{1,\dots,1}_{t_1-1\text{ times}},\dots,u_p+1,\underbrace{1,\dots,1}_{t_p-1\text{ times}}),
\end{equation*}
with its dual
\begin{equation*}
    \s^\dagger=(s'_1,\dots,s'_m)=(t_p+1,\underbrace{1,\dots,1}_{u_p-1\text{ times}},\dots,t_1+1,\underbrace{1,\dots,1}_{u_1-1\text{ times}}).
\end{equation*}
We will apply the duality $\zeta(\s)=\zeta(\s^\dagger)$, stated in Theorem \ref{duality}, to each of the terms in the sum
\begin{equation*}
    \sum_{k=1}^l\zeta(s'_1,\dots,s'_{k-1},s'_k+1,s'_{k+1},\dots,s'_m).
\end{equation*}
Suppose that $s'_k$ is in one of the $p$ blocks $\dots,t_i+1,1,\dots,1,\dots$\;. If $s'_k=t_i+1$, then we have
\begin{align*}
    (t_p+1,&\underbrace{1,\dots,1}_{u_p-1\text{ times}},\dots,t_i+2,\underbrace{1,\dots,1}_{u_i-1\text{ times}},\dots,t_1+1,\underbrace{1,\dots,1}_{u_1-1\text{ times}})^\dagger\\
    &=(u_1+1,\underbrace{1,\dots,1}_{t_1-1\text{ times}},\dots,u_i+1,\underbrace{1,\dots,1}_{t_i\text{ times}},\dots,u_p+1,\underbrace{1,\dots,1}_{t_p-1\text{ times}}).
\end{align*}
On the other hand, if $s'_k=1$, with position
\begin{equation*}
    \dots,t_i+1,\underbrace{1,\dots,1}_{a_i-1\text{ times}},\vertarrowbox{1}{$s'_k$},\underbrace{1,\dots,1}_{b_i-1\text{ times}},\dots\;,
\end{equation*}
we get
\begin{align*}
    (t_p+1,&\underbrace{1,\dots,1}_{u_p-1\text{ times}},\dots,t_i+1,\underbrace{1,\dots,1}_{a_i-1\text{ times}},2,\underbrace{1,\dots,1}_{b_i-1\text{ times}},\dots,t_1+1,\underbrace{1,\dots,1}_{u_1-1\text{ times}})^\dagger\\
    &=(u_1+1,\underbrace{1,\dots,1}_{t_1-1\text{ times}},\dots,b_i,a_i,\underbrace{1,\dots,1}_{t_i-1\text{ times}},\dots,u_p+1,\underbrace{1,\dots,1}_{t_p-1\text{ times}}).
\end{align*}
When we consider all positions for $s'_k$ in this block, we see that the cases where $s'_k=1$ correspond to the cases $\dots s_{k-1},s_k-j,j+1,s_{k+1},\dots$ for $1\leq j\leq s_k-2$. The case where $s'_k=t_i+1$ corresponds to the case $j=0$.
\end{proof}
The MZVs also satisfy multiplicative relations, which can be connected to an algebraic structure. We will discuss these relations and this connection in the next chapter.
\section{The algebra of multiple zeta values}
\label{chapter:MZV}
Let us directly calculate the product between two multiple zeta values
\begin{align}
\label{eq:productmzvs}
    \zeta(s)\zeta(t)&=\sum_{n,m>0}\frac{1}{n^sm^t}.
\end{align}
Instead of summing over all pairs $n,m$, we can consider the cases where $n>m$, $m>n$ and $n=m$. The first two cases correspond to the zeta values $\zeta(s,t)$ and $\zeta(t,s)$, respectively, and the last case corresponds to the value $\zeta(s+t)$. This proves the following proposition.
\begin{proposition}[Stuffle product]
\label{stufflel=1}
For integers $s,t>1$ we have
\begin{equation*}
    \zeta(s)\zeta(t)=\zeta(s,t)+\zeta(t,s)+\zeta(s+t).
\end{equation*}
\end{proposition}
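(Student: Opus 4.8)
The plan is to expand the product $\zeta(s)\zeta(t)$ as a double series over the positive integers and then partition the summation lattice into three disjoint regions according to the order relation between the two indices. First I would write the product of the two single sums as
\begin{equation*}
    \zeta(s)\zeta(t)=\left(\sum_{n>0}\frac{1}{n^s}\right)\left(\sum_{m>0}\frac{1}{m^t}\right)=\sum_{n,m>0}\frac{1}{n^sm^t},
\end{equation*}
exactly as in \eqref{eq:productmzvs}. The set of index pairs $\{(n,m):n,m>0\}$ is then the disjoint union of the three pieces $\{n>m\}$, $\{m>n\}$ and $\{n=m\}$, so it suffices to match each piece with a multiple zeta value and add the contributions.

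For the second step I would identify each region explicitly. The sum over $\{n>m>0\}$ of $1/(n^s m^t)$ is, by definition, $\zeta(s,t)$; the sum over $\{m>n>0\}$ of $1/(n^s m^t)$ is $\zeta(t,s)$ (relabelling so that the larger index carries the exponent $t$); and on the diagonal $\{n=m>0\}$ the summand becomes $1/n^{s+t}$, whose sum is $\zeta(s+t)$. Collecting the three contributions yields
\begin{equation*}
    \zeta(s)\zeta(t)=\zeta(s,t)+\zeta(t,s)+\zeta(s+t),
\end{equation*}
which is the claimed identity. Note that the admissibility hypotheses are all met here: since $s,t>1$ each of the four series appearing is admissible and therefore convergent by the preceding Lemma.

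The one point that genuinely needs care — and the step I would regard as the main obstacle — is the rearrangement of the double series. Splitting $\sum_{n,m>0}$ into the three sub-sums and evaluating them independently is legitimate only because the family $\{1/(n^sm^t)\}_{n,m>0}$ is absolutely summable. This in turn follows from $s,t>1$: the factorisation $\sum_{n,m>0}1/(n^sm^t)=\zeta(s)\zeta(t)$ of the series of absolute values is finite, so unconditional convergence holds and the terms may be regrouped in any fashion. Once this justification is in place, the partition argument is purely combinatorial and the proposition follows immediately.
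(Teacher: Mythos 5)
Your argument is correct and is essentially identical to the paper's: both expand $\zeta(s)\zeta(t)$ as the double series $\sum_{n,m>0}1/(n^sm^t)$ and split the index set into the three regions $n>m$, $m>n$ and $n=m$, identifying these with $\zeta(s,t)$, $\zeta(t,s)$ and $\zeta(s+t)$. Your explicit justification of the rearrangement via absolute convergence is a welcome extra detail that the paper leaves implicit.
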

The product $\zeta(s)\zeta(t)$ can also be represented in an alternative way. This other representation in terms of MZVs follows from applying the identity
\begin{equation*}
    \frac{1}{n^sm^t}=\sum_{j=1}^{s+t-1}\left(\frac{{j-1\choose s-1}}{(n+m)^j m^{s+t-j}}+\frac{{j-1\choose t-1}}{(n+m)^j n^{s+t-j}}\right)
\end{equation*}
to the terms in (\ref{eq:productmzvs}). If we then set $k=n+m$ and apply the definition of the multiple zeta values to the pairs $k>n>0$ and $k>m>0$, we obtain the following relation.
\begin{proposition}[Shuffle product]
\label{shufflel=1}
For integers $s,t>1$ we have
\begin{equation*}
    \zeta(s)\zeta(t)=\sum_{j=1}^{s+t-1}\left({j-1\choose s-1}+{j-1\choose t-1}\right)\zeta(j,s+t-j)
\end{equation*}
\end{proposition}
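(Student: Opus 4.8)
The plan is to follow the roadmap set out just before the statement: expand the product as the double series $\zeta(s)\zeta(t)=\sum_{n,m\geq1}\frac{1}{n^sm^t}$ from \eqref{eq:productmzvs}, apply the displayed partial-fraction identity termwise, and then recognise the resulting sums as multiple zeta values of length two. The real content, and what I regard as the main obstacle, is to \emph{prove} the partial-fraction identity
\[
\frac{1}{n^sm^t}=\sum_{j=1}^{s+t-1}\left(\binom{j-1}{s-1}\frac{1}{(n+m)^jm^{s+t-j}}+\binom{j-1}{t-1}\frac{1}{(n+m)^jn^{s+t-j}}\right),
\]
since once it is available the remaining steps are bookkeeping.

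To establish this identity I would iterate the elementary splitting
\[
\frac{1}{n^am^b}=\frac{1}{n+m}\cdot\frac{1}{n^am^{b-1}}+\frac{1}{n+m}\cdot\frac{1}{n^{a-1}m^b},
\]
valid for $a,b\geq1$ (it is just $\frac1{n+m}\bigl(\frac1n+\frac1m\bigr)=\frac1{nm}$ with the remaining powers carried along). Starting from $(a,b)=(s,t)$, each application contributes a factor $\frac1{n+m}$ and lowers exactly one of the two exponents by one; I would follow each branch until either the exponent of $m$ or the exponent of $n$ first reaches $0$. Interpreting the branching as lattice paths, a terminal term $\frac{1}{(n+m)^jn^{s+t-j}}$ (exponent of $m$ exhausted) is reached after $j$ steps, precisely $t$ of which decrement $m$ and with the final step being such a decrement; the number of such paths is $\binom{j-1}{t-1}$. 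The symmetric count yields the coefficient $\binom{j-1}{s-1}$ for the $m$-type terms, and since $\binom{j-1}{t-1}=0$ whenever $j<t$ the summation range may be taken uniformly as $1\leq j\leq s+t-1$. Equivalently, the identity can be proved by induction on the weight $s+t$ using the same splitting together with Pascal's rule.

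With the identity in hand I would substitute it into the double sum and interchange the order of summation, which is legitimate because every term is nonnegative (so Tonelli applies, and absolute convergence of $\zeta(s)\zeta(t)$ is already guaranteed by the convergence lemma). For each fixed $j$ I would set $k=n+m$: summing $\frac{1}{(n+m)^jm^{s+t-j}}$ over $n,m\geq1$ becomes $\sum_{k>m\geq1}\frac{1}{k^jm^{s+t-j}}=\zeta(j,s+t-j)$, and the $n$-type term likewise yields $\zeta(j,s+t-j)$ via $k>n\geq1$. Collecting the two binomial coefficients then produces the claimed formula. The only point requiring care is admissibility: $\zeta(j,s+t-j)$ converges only for $j\geq2$, but the hypothesis $s,t>1$ forces the $j=1$ coefficient $\binom{0}{s-1}+\binom{0}{t-1}$ to vanish, so every surviving summand is an honest convergent MZV and no divergent pieces ever appear.
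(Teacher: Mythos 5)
Your proposal follows exactly the route the paper takes: apply the stated partial-fraction identity to the double series \eqref{eq:productmzvs} and substitute $k=n+m$ to recognise the two families of length-two MZVs. It is correct, and in fact it supplies the details the paper omits — a proof of the partial-fraction identity via the iterated splitting $\frac{1}{n^am^b}=\frac{1}{n+m}\bigl(\frac{1}{n^am^{b-1}}+\frac{1}{n^{a-1}m^b}\bigr)$ with a lattice-path count of the coefficients, the justification of the interchange of summation, and the observation that the (divergent) $j=1$ term carries coefficient $\binom{0}{s-1}+\binom{0}{t-1}=0$ when $s,t>1$.
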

Of course, this trick of splitting the product can be generalised to MZVs of arbitrary length. In this chapter we will connect these multiplicative relations to an algebraic structure in which these relations appear naturally.

\subsection{The algebra of multiple zeta values}
Consider the alphabet $X=\{x_0,x_1\}$, and all words generated by this alphabet. For a multi-index $\s=(s_1,\dots,s_l)$ we relate to it the word
\begin{equation*}
    x_\s=x_0^{s_1-1}x_1x_0^{s_2-1}x_1\cdots x_0^{s_l-1}x_1.
\end{equation*}
The degree $\operatorname{deg}(x_\s)=s_1+\dots+s_l$ is also called the weight of the word $x_\s$, and is denoted by $|\s|$. Similarly, the length $l(x_\s)=l(\s)=l$ is the number of times that the letter $x_1$ appears in $x_\s$. We say that $x_\s$ is \textit{admissible} if $s_1>1$.\\

Let $\Ha=\Q\langle X\rangle$ be the $\Q$-algebra generated by $X$, graded by weight. We define the spaces $\Ha^1=\Q\textbf{1}\oplus \Ha x_1$ and $\Ha^0=\Q\textbf{1}\oplus x_0\Ha x_1$, where $\textbf{1}$ denotes the empty word in $\Ha$. In other words, the space $\Ha^1$ consists of all $\Q$-linear combinations of the empty word and all words of the form $x_\s$, for some index $\s$. Every word $v\in \Ha^1$ can be written as $v=y_{s_1}y_{s_2}\cdots y_{s_l}$, where the letters $\{y_s\}_{s=1}^\infty$ are given by $y_s=x_0^{s-1}x_1$. The space $\Ha^0\subseteq\Ha^1$ is the subspace generated by all admissible words.\\

We define two multiplications. The first is called the \textit{shuffle product}, which is denoted by $\shuffle$, and is defined on $\Ha$. The second is the \textit{stuffle product}, denoted by $\ast$, and is defined only on $\Ha^1$. These multiplications are given by the following rules:\\
For any word $w$ we set
\begin{equation*}
    \mathbf{1}\shuffle w = w\shuffle \mathbf{1}=w,\qquad\qquad \mathbf{1}\ast w = w\ast \mathbf{1}=w.
\end{equation*}
Additionally, for any non-empty words $x_iv$ and $x_jw$, we recursively define
\begin{equation*}
    x_iv\shuffle x_jw=x_i(v\shuffle x_jw) + x_j(x_iv\shuffle w),
\end{equation*}
and for words $y_iw$ and $y_jv$, we define
\begin{equation*}
    y_iv\ast y_jw=y_i(v\ast y_jw) + y_j(y_iv\ast w) +y_{i+j}(v\ast w).
\end{equation*}
Finally, we extend this definition linearly over $\Ha$ and $\Ha^1$, respectively.\\

\noindent\textbf{Example.} We have
\begin{align*}
    x_0x_1\shuffle x_0x_1^2&=x_0(x_1\shuffle x_0x_1^2)+x_0(x_0x_1\shuffle x_1^2)\\
    &=\dots\\
    &=3x_0x_1x_0x_1^2+6x_0^2x_1^2+x_0x_1^2x_0x_1
\end{align*}
and
\begin{align*}
    x_0x_1\ast x_0x_1^2=y_2\ast y_2y_1&=y_2^2y_1+y_2(y_2\ast y_1) +y_4y_1\\
    &=2y_2^2y_1+y_2y_1y_2+y_2y_3+y_4y_1.
\end{align*}

Note that the subspace $\Ha^0$ is closed under $\shuffle$ and $\ast$. This gives rise to the algebras $\Ha^0_\shuffle=(\Ha^0,\shuffle)$ and $\Ha^0_\ast=(\Ha^0,\ast)$.\\

\begin{lemma}
The products $\shuffle$ and $\stuffle$ are commutative and associative.
\end{lemma}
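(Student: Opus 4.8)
The plan is to reduce everything to identities between words and then argue by induction on weight. Since both products are defined to be $\Q$-bilinear, it suffices to verify commutativity and associativity on the basis of $\Ha$ (respectively $\Ha^1$) consisting of words; both properties then extend to all of $\Ha$ by linearity, so throughout I only manipulate monomials.

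First I would prove commutativity. For the shuffle product I induct on the sum $\deg(u)+\deg(v)$ of the weights of two words $u,v$. If either word is $\mathbf{1}$, the claim is immediate from the rule $\mathbf{1}\shuffle w=w\shuffle\mathbf{1}=w$. Otherwise I write $u=x_iu'$ and $v=x_jv'$, and the recursive definition gives
\begin{align*}
    u\shuffle v&=x_i(u'\shuffle v)+x_j(u\shuffle v'),\\
    v\shuffle u&=x_j(v'\shuffle u)+x_i(v\shuffle u').
\end{align*}
The inductive hypothesis yields $u'\shuffle v=v\shuffle u'$ and $u\shuffle v'=v'\shuffle u$, so the two right-hand sides coincide. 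The stuffle case is identical except that each expansion carries an additional term $y_{i+j}(u'\ast v')$, which is manifestly symmetric in $i$ and $j$ and so matches directly.

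The harder part is associativity, which I would again establish by induction, now on the total weight $\deg(u)+\deg(v)+\deg(w)$. After disposing of the cases where one of the three words is empty (where associativity is trivial), I write $u=x_iu'$, $v=x_jv'$, $w=x_kw'$ and expand both $(u\shuffle v)\shuffle w$ and $u\shuffle(v\shuffle w)$ completely, peeling off a single leading letter at each step. Each side then organizes into three groups according to which of the three words donates the first letter $x_i$, $x_j$, or $x_k$; within each group the remaining factor is a shuffle of a strictly shorter triple, to which the inductive hypothesis applies. Matching the three groups term by term gives the identity. For the stuffle product the same scheme works, but the three-term recursion multiplies the number of branches: the leading symbol can be emitted from $u$, from $v$, or arise as a merged letter, so the full symmetric expansion of a triple $y_iu',y_jv',y_kw'$ acquires one term for each nonempty subset of $\{i,j,k\}$, namely $y_i,y_j,y_k$, the pairwise merges $y_{i+j},y_{i+k},y_{j+k}$, and the total merge $y_{i+j+k}$.

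I expect the bookkeeping in the stuffle associativity to be the main obstacle: one must track every way the leading letters $y_i,y_j,y_k$ can be emitted or merged and verify that the merged contributions appear with matching multiplicity in both bracketings, so that the identical seven-term expansion is produced on each side. Organizing the expansion symmetrically, so that each monomial on the left is paired with exactly one on the right, is where the care is needed; the shuffle case is then recovered as the same argument with all merge terms suppressed.
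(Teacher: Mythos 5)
Your argument is correct and follows essentially the same route as the paper: unwind the recursive definitions on two (or three) non-empty words, apply the inductive hypothesis to the shorter products, and observe that the extra stuffle terms are symmetric. In fact you go further than the paper, which only writes out the commutativity of $\shuffle$ (by the same induction) and dismisses the remaining three claims with ``can be proven in a similar way''; your seven-term bookkeeping for stuffle associativity is exactly the computation that remark is hiding.
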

\begin{proof}
We will prove the commutativity of $\shuffle$. Let $a,b$ be two words. We will show commutativity by induction on the sum of the lengths $l(a)+l(b)$. If either $a$ or $b$ is the empty word, then the statement follows by definition. So let us assume that we have $a=x_iv$ and $b=x_jw$ for some letters $x_i,x_j$ and words $v,w$. Then we can simply apply the inductive relation defining the product $\shuffle$ so that we obtain
\begin{align*}
    a\shuffle b&=x_iv\shuffle x_jw\\
    &=x_i(v\shuffle x_jw)+x_j(x_iv\shuffle w)\\
    &=x_i(x_jw\shuffle v)+x_j(w\shuffle x_iv)\\
    &=x_jw\shuffle x_iv\\
    &=b\shuffle a.
\end{align*}
This shows the commutativity of $\shuffle$. The other statements can proven in a similar way.
\end{proof}
For an admissible word $x_\s$ we define the map $\zeta\colon\Ha^0\to\R$ on the words
\begin{equation*}
    \zeta(\mathbf{1})=1,\qquad\qquad \zeta(x_\s)=\zeta(\s)=\zeta(s_1,s_2\dots,s_l),
\end{equation*}
and then extend this definition linearly over $\Ha^0$. With this definition we are able to describe the link between multiple zeta values and the algebras $\Ha^0_\shuffle$ and $\Ha^0_\ast$. The following theorems greatly motivate these shuffle and stuffle algebras.
\begin{theorem}
\label{homommzvstuffle}
The map $\zeta\colon \Ha^0_\ast\to \R$ is a homomorphism of algebras, i.e. for any admissible words $v,w \in \Ha^0$ we have 
\begin{equation*}
    \zeta(v\ast w)=\zeta(v)\cdot\zeta(w).
\end{equation*}
\end{theorem}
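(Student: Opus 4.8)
The plan is to reduce to monomials and then match the series decomposition of the product with the recursive definition of the stuffle product. Since $\zeta$ is extended $\Q$-linearly and $\ast$ is bilinear, it suffices to verify $\zeta(x_\s \ast x_{\mathbf{t}}) = \zeta(\s)\zeta(\mathbf{t})$ for two admissible words $x_\s = y_{s_1}\cdots y_{s_l}$ and $x_{\mathbf{t}} = y_{t_1}\cdots y_{t_k}$. The starting point is the product of the two defining series, which by the absolute convergence established in the opening lemma may be rearranged freely:
\[
\zeta(\s)\zeta(\mathbf{t}) = \sum_{\substack{n_1>\dots>n_l>0\\ m_1>\dots>m_k>0}} \frac{1}{n_1^{s_1}\cdots n_l^{s_l}\, m_1^{t_1}\cdots m_k^{t_k}}.
\]

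Next I would partition this double sum according to the relative order of the two decreasing chains $n_1>\dots>n_l$ and $m_1>\dots>m_k$. Every pair of such chains determines, upon merging, a single weakly interleaved arrangement in which each comparison of an $n_i$ with an $m_j$ falls into exactly one of $n_i>m_j$, $n_i<m_j$, or $n_i=m_j$; grouping the summands by the combinatorial type of this merge collects them into nested sums that are again multiple zeta values. The key observation is that these merge types are in bijection with the monomials appearing in $x_\s \ast x_{\mathbf{t}}$, and the three possibilities for the two largest indices $n_1$ and $m_1$ — namely $n_1$ strictly largest, $m_1$ strictly largest, or $n_1=m_1$ — correspond precisely to the three terms
\[
y_{s_1}(v\ast y_{t_1}w) + y_{t_1}(y_{s_1}v\ast w) + y_{s_1+t_1}(v\ast w)
\]
in the recursive rule defining $\ast$, where $v=y_{s_2}\cdots y_{s_l}$ and $w=y_{t_2}\cdots y_{t_k}$.

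This structure makes induction the natural vehicle. I would induct on the total length $l(\s)+l(\mathbf{t})$. The base case, when one word is the empty word $\mathbf{1}$, is immediate since $\mathbf{1}\ast x=x$ and $\zeta(\mathbf{1})=1$. For the inductive step I would isolate the outermost indices $n_1$ and $m_1$ in the product of series, split the range of the remaining indices into the three regimes above, and in each regime recognise the inner sum as a product of two shorter multiple zeta values. Applying the inductive hypothesis to these three shorter products converts them into the $\zeta$-images of $v\ast y_{t_1}w$, $y_{s_1}v\ast w$, and $v\ast w$, which assemble to exactly $\zeta(y_{s_1}v\ast y_{t_1}w)=\zeta(x_\s\ast x_{\mathbf{t}})$.

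I expect the main obstacle to be the careful bookkeeping in the inductive step: one must check that after fixing the largest index — or the common value in the equal case — the summation over the smaller indices genuinely factors as a product of two lower-length zeta series with the correct arguments, so that the inductive hypothesis applies verbatim. In particular the equality case $n_1=m_1$ must contribute the weight-combining letter $y_{s_1+t_1}$, and one must confirm that admissibility and convergence are preserved throughout the splitting so that every intermediate series is a bona fide MZV. Convergence itself poses no difficulty, since the absolute convergence from the opening lemma legitimises every rearrangement.
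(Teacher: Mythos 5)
Your combinatorial idea --- splitting the double sum according to whether $n_1>m_1$, $n_1<m_1$ or $n_1=m_1$ and matching the three cases with the three terms of the recursive stuffle rule --- is exactly the engine of the paper's proof. But there is a genuine gap in how you set up the induction. You induct on the statement $\zeta(v\ast w)=\zeta(v)\zeta(w)$ for the \emph{infinite} series, and in the inductive step you claim that after fixing the outermost index the inner summation ``factors as a product of two lower-length zeta series with the correct arguments, so that the inductive hypothesis applies verbatim.'' It does not. Once $n_1$ is fixed as the strictly largest index, the remaining indices $n_2,\dots,n_l$ and $m_1,\dots,m_k$ all range over integers $<n_1$; the inner object is therefore a product of \emph{truncated} sums cut off at $n_1-1$, not a product of multiple zeta values, and your inductive hypothesis (a statement about the full series $\zeta$) says nothing about it. Worse, the tail words that appear, such as $v=y_{s_2}\cdots y_{s_l}$, are in general non-admissible (e.g.\ $s_2=1$), so the corresponding infinite series diverge and cannot be ``recognised as bona fide MZVs'' at all --- the confirmation you defer to the bookkeeping stage would in fact fail.

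The repair is the one the paper carries out: strengthen the induction so that it lives at the level of the truncated multiple harmonic sums
\begin{equation*}
    H(\s;N)=\sum_{N\geq n_1>\dots>n_l>0}\frac{1}{n_1^{s_1}\cdots n_l^{s_l}},
\end{equation*}
which are defined for \emph{all} indices, admissible or not. One proves $H(v;N)\,H(w;N)=H(v\ast w;N)$ for every $N$ by induction on $l(v)+l(w)$, using precisely your three-way split together with the recursion $H(s_1,\dots,s_l;N)=\sum_{n_1\leq N}n_1^{-s_1}H(s_2,\dots,s_l;n_1-1)$; here the inner sums really are truncated harmonic sums of shorter words, so the strengthened hypothesis applies. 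The theorem for $\zeta$ then follows by letting $N\to\infty$, where admissibility of $v$ and $w$ guarantees convergence. Your proposal has the right skeleton but is missing this reformulation, without which the inductive step does not close.
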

\begin{theorem}
\label{homommzvshuffle}
The map $\zeta\colon \Ha^0_\shuffle\to \R$ is a homomorphism of algebras, i.e. for any admissible words $v,w \in \Ha^0$ we have 
\begin{equation*}
    \zeta(v\shuffle w)=\zeta(v)\cdot\zeta(w).
\end{equation*}
\end{theorem}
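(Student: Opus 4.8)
The plan is to prove the theorem through the \emph{iterated-integral representation} of multiple zeta values, which converts the combinatorial shuffle product into the elementary geometric fact that a product of integrals over nested simplices decomposes along shuffles. First I would assign to the two letters the differential one-forms $\omega_0=\frac{dt}{t}$ and $\omega_1=\frac{dt}{1-t}$ on the open interval $(0,1)$, and to any word $w=x_{\epsilon_1}\cdots x_{\epsilon_k}$ (each $\epsilon_i\in\{0,1\}$) the iterated integral
\begin{equation*}
    I(w)=\int_{1>t_1>\cdots>t_k>0}\omega_{\epsilon_1}(t_1)\cdots\omega_{\epsilon_k}(t_k),
\end{equation*}
extended $\Q$-linearly to all of $\Ha$ with $I(\mathbf{1})=1$. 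The key lemma to establish is that $I(u)=\zeta(u)$ for every admissible word $u\in\Ha^0$. This is verified by expanding $\frac{1}{1-t}=\sum_{n\geq 1}t^{n-1}$ at each occurrence of $\omega_1$, integrating the resulting monomials over the simplex term by term, and recognising that the nested integration of the factors $\frac{dt}{t}$ produces exactly the denominators $n_1^{s_1}\cdots n_l^{s_l}$ subject to the strict inequalities $n_1>\cdots>n_l>0$. Admissibility ($s_1>1$) guarantees that $x_\s$ begins with $x_0$ and ends with $x_1$, so that the outermost form is $\frac{dt}{t}$ near $t=1$ and the innermost is $\frac{dt}{1-t}$ near $t=0$; this is precisely what makes the iterated integral converge, matching the convergence already established for $\zeta(\s)$.

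Second, I would prove the shuffle identity for the integrals themselves: for any words $a$ and $b$,
\begin{equation*}
    I(a)\cdot I(b)=I(a\shuffle b).
\end{equation*}
Writing $I(a)I(b)$ as a single integral over the product of a $p$-simplex (in variables $t_1>\cdots>t_p$) and a $q$-simplex (in variables $s_1>\cdots>s_q$), one partitions the product domain, up to a set of measure zero where two coordinates coincide, into the chambers on which all $p+q$ variables are totally ordered. Each such chamber is itself a $(p+q)$-simplex and corresponds to exactly one interleaving of the two variable blocks that preserves the internal order within each block; summing the integrands over these interleavings reproduces term by term the combinatorial recursion defining $a\shuffle b$ on $\Ha$. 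This is the classical statement that iterated integrals over a fixed interval multiply according to the shuffle rule, and the recursion on $p+q$ mirrors exactly the recursive definition $x_iv\shuffle x_jw=x_i(v\shuffle x_jw)+x_j(x_iv\shuffle w)$.

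Finally, combining the two lemmas gives the theorem at once. For admissible words $v,w\in\Ha^0$ the integrals $I(v)$ and $I(w)$ converge, every word occurring in $v\shuffle w$ is again admissible (it still begins with $x_0$ and ends with $x_1$), and so, using linearity of both $I$ and $\zeta$ together with the representation lemma on each admissible summand,
\begin{equation*}
    \zeta(v)\,\zeta(w)=I(v)\,I(w)=I(v\shuffle w)=\zeta(v\shuffle w),
\end{equation*}
as required. The main obstacle is the pair of analytic points hidden in the first two steps: one must justify the term-by-term interchange of summation and integration over the simplex in the convergent regime, and verify that the diagonal boundaries $t_i=s_j$ form a null set contributing nothing to the chamber decomposition. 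Once these are settled, the identification of the integral shuffle with the combinatorial product $\shuffle$ defined on $\Ha$ is purely formal.
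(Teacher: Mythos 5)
Your proposal is correct, but it follows a genuinely different route from the paper. The paper never sets up the full iterated-integral representation: it introduces the generalised polylogarithms $\Li_\s(z)=\sum_{n_1>\cdots>n_l>0}z^{n_1}/(n_1^{s_1}\cdots n_l^{s_l})$, establishes the first-order relations $\frac{d}{dz}\Li_{x_iv}(z)=\omega_i(z)\Li_v(z)$ with $\omega_0(z)=1/z$, $\omega_1(z)=1/(1-z)$, and then proves $\Li_{v\shuffle w}(z)=\Li_v(z)\Li_w(z)$ by induction on the total length, differentiating the product, applying the recursive definition of $\shuffle$, and fixing the constant of integration at $z=0$; the theorem follows by letting $z\to1^-$. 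You instead integrate these relations all the way down to the Chen--Kontsevich iterated integral $I(w)$ over the simplex $1>t_1>\cdots>t_k>0$ and obtain the shuffle identity from the chamber decomposition of the product of two simplices. The trade-off is as follows: the paper's argument stays inside the disc $|z|<1$, where everything converges absolutely and no endpoint singularities appear until the single limit at the end, and the induction is a few lines with no measure-theoretic bookkeeping; your argument is more geometric, makes the combinatorial meaning of $\shuffle$ (interleavings of two ordered blocks) visibly identical to the decomposition of the integration domain, and yields the representation $\zeta(\s)=I(x_\s)$ as a by-product --- which is the classical route to duality via $t\mapsto 1-t$, a fact the paper alludes to before giving its connected-sum proof of Theorem \ref{dualitymzv}. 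The two analytic points you flag (justifying term-by-term integration of the geometric series over the simplex, and discarding the measure-zero diagonals in the chamber decomposition) are genuine obligations of your route but are routine, and your observation that every word in $v\shuffle w$ is again admissible is exactly what makes the final identification with $\zeta$ legitimate.
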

We will prove Theorems \ref{homommzvstuffle} and \ref{homommzvshuffle} in Sections \ref{subsection:multiple harmonic sums} and \ref{subsection:generalised polylogarithms}, respectively. Additionally, the statements in these theorems can be combined and specified to give another relation between MZVs.
\begin{theorem}
\label{shufflestufflecombi}
For any admissible word $w\in\Ha^0$ we have
\begin{equation*}
    \zeta(x_1\shuffle w- x_1\ast w)=0
\end{equation*}
\end{theorem}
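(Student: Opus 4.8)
The plan is to bypass any regularisation and instead reduce the identity directly to Hoffman's relations (Theorem~\ref{hoffmansrelations}), which are already at our disposal. Concretely, I would expand both products explicitly, observe that their non-admissible parts and a large common block of terms cancel, and recognise what survives as a single instance of Hoffman's identity.

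First I would check that $x_1\shuffle w-x_1\ast w$ genuinely lies in $\Ha^0$, so that applying $\zeta$ is legitimate. Writing $w=x_0v$ (every admissible word begins in $x_0$), the shuffle recursion gives $x_1\shuffle x_0v=x_1x_0v+x_0(x_1\shuffle v)$, so the only word not beginning in $x_0$ is the leading term $x_1w$. Writing instead $w=y_{s_1}\cdots y_{s_l}$ with $s_1\ge2$, the stuffle recursion gives $x_1\ast w=y_1w+y_{s_1}(y_1\ast y_{s_2}\cdots y_{s_l})+y_{1+s_1}y_{s_2}\cdots y_{s_l}$, and again the only non-admissible word is the leading $y_1w=x_1w$. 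Since the two offending terms are literally equal, they cancel in the difference and $x_1\shuffle w-x_1\ast w\in\Ha^0$.

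Next I would expand the two products as elements of $\Ha$. Shuffling the single letter $x_1$ into $w$ means summing, over every gap of $w$, the word obtained by inserting $x_1$ there. Inserting inside the $x_0$-run of the $k$-th block splits the entry $s_k$ into a pair $(b,c)$ with $b+c=s_k+1$, $b\ge2$, $c\ge1$; inserting just after the $x_1$ that ends a block, or at the very end, creates a new entry equal to $1$; and inserting at the very front yields the single non-admissible word $x_1w$. The stuffle recursion instead unfolds into the $l{+}1$ insertions of $y_1$ (the first of which is again $x_1w$) together with the $l$ ``stuffed'' words $y_{s_1}\cdots y_{s_k+1}\cdots y_{s_l}$. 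Both expansions therefore share the same leading word $x_1w$ and the same block of $1$-insertion words $y_{s_1}\cdots y_{s_k}\,y_1\,y_{s_{k+1}}\cdots y_{s_l}$ for $k=1,\dots,l$.

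Subtracting, the word $x_1w$ and all the $1$-insertion words cancel, leaving an element of $\Ha^0$ to which $\zeta$ applies, and writing each admissible split $(b,c)$ with $b\ge2$ as $(s_k-j,\,j+1)$ with $0\le j\le s_k-2$ gives
\[
\zeta(x_1\shuffle w-x_1\ast w)=\sum_{\substack{k=1\\ s_k\ge2}}^{l}\sum_{j=0}^{s_k-2}\zeta(\dots,s_k-j,j+1,\dots)-\sum_{k=1}^{l}\zeta(\dots,s_k+1,\dots).
\]
The right-hand side is exactly the difference between the two sides of Hoffman's relations (Theorem~\ref{hoffmansrelations}), hence $0$. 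I expect the main obstacle to be purely combinatorial: carrying out the shuffle expansion correctly, and in particular making sure that the between-block gaps produce the $1$-insertion words with the right multiplicity so that they cancel the stuffle's $1$-insertions exactly, leaving precisely one instance of Hoffman's identity. (As a sanity check, for $w=y_s$ this recipe collapses to $\sum_{j=2}^{s}\zeta(j,s+1-j)=\zeta(s+1)$, which is Euler's Theorem~\ref{eulertheorem}.)
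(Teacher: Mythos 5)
Your proof is correct, but it takes a genuinely different route from the one the paper relies on: the paper omits the argument and points to a regularisation-style proof in the cited lecture notes, whereas you reduce the statement to Hoffman's relations (Theorem \ref{hoffmansrelations}), which the paper establishes independently via Ohno's relations and duality (ultimately through the connected-sum machinery), so there is no circularity. The combinatorics is sound: shuffling the single letter $x_1$ into $w=x_\s$ is exactly the sum over the $|w|+1$ insertion gaps, and your bookkeeping of these gaps --- one at the front giving $x_1w$, one after each block's $x_1$ giving the $l$ words $y_{s_1}\cdots y_{s_k}y_1y_{s_{k+1}}\cdots y_{s_l}$, and $s_k-1$ gaps inside the $k$-th $x_0$-run giving the splits $(j+1,s_k-j)$ --- accounts for all gaps with the right multiplicities (coinciding words such as those arising from consecutive $x_1$'s are handled automatically because you sum over gaps, not over distinct words). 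The same leading word and the same $1$-insertion block appear in the stuffle expansion, so the difference is precisely (RHS $-$ LHS) of Hoffman's identity and $\zeta$ of it vanishes. What your approach buys is an elementary, regularisation-free proof that makes the well-known equivalence between Theorem \ref{shufflestufflecombi} and Hoffman's relations explicit; what it costs is that it inherits the full weight of the paper's proof of Theorem \ref{hoffmansrelations} (hence of Ohno's relations), whereas the standard regularised-double-shuffle argument is self-contained and generalises to $u\ast w-u\shuffle w$ for arbitrary $u\in\Ha^1$, which a reduction to Hoffman's relations does not.
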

We will omit the proof, but it can be found in \cite[Thm. 3.3, pp. 28, 40]{notes}.\\

\noindent Let $\tau\colon \Ha\to\Ha$ be the anti-automorphism given by
\begin{equation*}
    \tau\colon x_{i_1}x_{i_2}\cdots x_{i_l}\mapsto x_{1-i_l}\cdots x_{1-i_2}x_{1-i_1}.
\end{equation*}
Note that this map is no different from the duality defined in Section \ref{subsection:examplesmzvtheorems}: $\tau( x_\s)=x_{\s^\dagger}$.
For example, we have $\tau(x_0^2x_1x_0x_1+x_0x_1)=x_0x_1x_0x_1^2+x_0x_1$. The final theorem we give, is the algebraic version of Theorem \ref{duality}.
\begin{theorem}
\label{dualitymzv}
For any admissible word $w\in\Ha^0$ we have 
\begin{equation*}
    \zeta(\tau w)=\zeta(w).
\end{equation*}
\end{theorem}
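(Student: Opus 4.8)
The plan is to recognise that Theorem~\ref{dualitymzv} is nothing but the word-algebra reformulation of the numerical duality already recorded in Theorem~\ref{duality}. Since $\zeta$ is defined on $\Ha^0$ by $\Q$-linear extension from the admissible words $x_\s$ (together with $\zeta(\mathbf{1})=1$), it suffices to verify the identity $\zeta(\tau x_\s)=\zeta(x_\s)$ on each such generator; the general statement then follows immediately by linearity.

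First I would check that the claim is even well-posed, i.e.\ that $\tau$ maps $\Ha^0$ into itself. An admissible word $x_\s=x_0^{s_1-1}x_1\cdots x_0^{s_l-1}x_1$ begins with $x_0$ (because $s_1>1$) and ends with $x_1$. The anti-automorphism $\tau$ reverses the letter order and exchanges $x_0\leftrightarrow x_1$, so its image again begins with $x_0$ and ends with $x_1$, hence is admissible; moreover $\tau(\mathbf{1})=\mathbf{1}$. Thus $\tau w\in\Ha^0$ whenever $w\in\Ha^0$, and $\zeta(\tau w)$ makes sense.

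The substantive step is the combinatorial identity $\tau(x_\s)=x_{\s^\dagger}$, already noted after the definition of $\tau$. Concretely, writing $\s$ in its block form $(u_1+1,1^{t_1-1},\dots,u_k+1,1^{t_k-1})$ turns $x_\s$ into a concatenation of blocks $x_0^{u_i}x_1^{t_i}$. Reversing the whole word and swapping $x_0\leftrightarrow x_1$ sends each block $x_0^{u_i}x_1^{t_i}$ to $x_0^{t_i}x_1^{u_i}$ and reverses their order, producing exactly the word attached to $\s^\dagger=(t_k+1,1^{u_k-1},\dots,t_1+1,1^{u_1-1})$. Granting this, for every admissible $x_\s$ we obtain
\begin{equation*}
    \zeta(\tau x_\s)=\zeta(x_{\s^\dagger})=\zeta(\s^\dagger)=\zeta(\s)=\zeta(x_\s),
\end{equation*}
the middle equality being precisely Theorem~\ref{duality}. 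Extending linearly over $\Ha^0$ then completes the argument.

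The only place demanding genuine care is the block-level bookkeeping identifying $\tau(x_\s)$ with $x_{\s^\dagger}$, since it hinges on matching the $u_i,t_i$ data of $\s$ with that of $\s^\dagger$ under reversal; everything else is purely formal. One caveat is that this route routes the proof through Theorem~\ref{duality}, which in this text is deduced from Ohno's relations (Theorem~\ref{ohnosrelations}), proved only later in Section~\ref{subsection:qmultiplezetavalues}. If a self-contained argument is preferred, the natural alternative is to use the iterated-integral representation of the MZVs together with the substitution $t\mapsto 1-t$: this reverses the integration simplex and interchanges the forms $dt/t$ and $dt/(1-t)$, which is exactly the action of $\tau$ on the associated word, and so yields $\zeta(\tau w)=\zeta(w)$ directly.
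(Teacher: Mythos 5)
Your proposal is logically sound: the identification $\tau(x_\s)=x_{\s^\dagger}$ is verified correctly (the block $x_0^{u_i}x_1^{t_i}$ goes to $x_0^{t_i}x_1^{u_i}$ with the blocks reversed), the well-posedness check that $\tau$ preserves $\Ha^0$ is a worthwhile observation, and linearity then reduces Theorem~\ref{dualitymzv} to Theorem~\ref{duality}. However, you should be aware that the paper does not treat Theorem~\ref{duality} as an independent prior result here: it proves Theorems~\ref{duality} and~\ref{dualitymzv} \emph{simultaneously} in Section~\ref{subsection:connected sums}, via the Seki--Yamamoto connected sum $Z(\s;\mathbf{t})$ carrying the factor $\frac{n_1!m_1!}{(n_1+m_1)!}$, the telescoping identity of Lemma~\ref{factoriallemma}, and the resulting index-transport rules $Z(s_1+1,\dots;\mathbf{t})=Z(s_1,\dots;1,\mathbf{t})$ and its mirror, iterated until $Z(\s;\varnothing)=Z(\varnothing;\s^\dagger)$. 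Your reduction therefore contains no substantive content of its own; all the analytic work is delegated either to Theorem~\ref{duality} (which in this text is otherwise obtained only from Ohno's relations, proved later via $q$-connected sums) or to the iterated-integral argument under $t\mapsto 1-t$, which the paper explicitly mentions as the ``original'' proof but deliberately does not follow. You correctly flag this dependency yourself, which is to your credit; to match the paper you would need to supply the connected-sum telescoping argument, whereas your integral-representation alternative is a genuinely different (and classical) self-contained route that trades the elementary series manipulation for the machinery of iterated integrals.
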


\noindent\textbf{Example.} The space generated by all MZVs of weight $4$ is equal to $\Q\langle \zeta(4)\rangle=\Q\langle \frac{\pi^4}{90}\rangle$. Namely, Theorem \ref{homommzvshuffle} and \ref{homommzvstuffle} give us the relations
\begin{align*}
    \zeta(2)\zeta(2)&=\zeta(x_0x_1)\zeta(x_0x_1)=\zeta(x_0x_1\shuffle x_0x_1)=\zeta(2x_0x_1x_0x_1+4x_0^2x_1^2)=2\zeta(2,2)+4\zeta(3,1)\\
    &=\zeta(y_2)\zeta(y_2)=\zeta(y_2\ast y_2)=\zeta(2y_2y_2+y_4)=2\zeta(2,2)+\zeta(4).
\end{align*}
This shows that $\zeta(3,1)=\frac{1}{4}\zeta(4)$. Additionally, Theorem $\ref{shufflestufflecombi}$ states that
\begin{equation*}
    0=\zeta(x_1\shuffle x_0^2x_1-y_1\ast y_3)=\zeta(x_0x_1x_0x_1+x_0^2x_1^2-x_0^3x_1)=\zeta(2,2)+\zeta(3,1)-\zeta(4),
\end{equation*}
so that $\zeta(2,2)=\zeta(4)-\zeta(3,1)=\frac{3}{4}\zeta(4)$. Finally, Theorem \ref{dualitymzv} gives us
\begin{equation*}
    \zeta(2,1,1)=\zeta(x_0x_1^3)=\zeta(x_0^3x_1)=\zeta(4).
\end{equation*}
Note that we could have also used Theorem \ref{shufflestufflecombi} again with $w=x_0x_1x_1$ to also obtain $\zeta(2,1,1)=\zeta(2,2)+\zeta(3,1)=\zeta(4)$.\\

This example shows that it is possible to deduce all possible $\Q$-linear relations in weight $4$ by only considering the relations obtained from Theorems \ref{homommzvstuffle} -- \ref{shufflestufflecombi}. It is believed (empirically and experimentally) that all $\Q$-linear relations between MZVs are deducible from these Theorems. This leads to the following standing conjecture.
\begin{conjecture*}
We have the equality
\begin{equation*}
    \ker(\zeta)=\{ u\ast w - u\shuffle w \mid u\in\Ha^1 \text{ and }v\in\Ha^0\}.
\end{equation*}
\end{conjecture*}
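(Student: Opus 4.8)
The statement asserts the equality of two $\Q$-subspaces of $\Ha^0$ (reading the right-hand side as the $\Q$-linear span of the displayed double shuffle differences, with the evident typo $v$ read as $w$). The plan is to treat the two inclusions completely separately, because they are of entirely different character: one is a direct consequence of the algebra built up in this chapter, whereas the other is, in the current state of the art, genuinely out of reach. Before starting I would verify that the right-hand side really does land in $\Ha^0$, i.e. that $u\ast w-u\shuffle w\in\Ha^0$ for every $u\in\Ha^1$ and $w\in\Ha^0$. Both $u\ast w$ and $u\shuffle w$ lie in $\Ha^1$, since a shuffle or stuffle of words ending in $x_1$ again ends in $x_1$; using $\Ha^1=\Ha^0\oplus x_1\Ha^1$ it then suffices to check that the projections of $u\ast w$ and of $u\shuffle w$ onto the non-admissible part $x_1\Ha^1$ agree and hence cancel in the difference. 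This is a direct computation from the recursive definitions; for instance $x_1\ast x_0x_1-x_1\shuffle x_0x_1=x_0^2x_1-x_0x_1^2\in\Ha^0$, the non-admissible term $x_1x_0x_1$ having dropped out.

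For the inclusion $\supseteq$ I would first handle the admissible case: if $u,w\in\Ha^0$, then Theorems \ref{homommzvstuffle} and \ref{homommzvshuffle} give $\zeta(u\ast w)=\zeta(u)\zeta(w)=\zeta(u\shuffle w)$, so $u\ast w-u\shuffle w\in\ker(\zeta)$; these are the finite double shuffle relations. To reach all $u\in\Ha^1$ — in particular non-admissible $u$ such as $u=x_1$ — I would invoke the regularization machinery: $\Ha^1$ is a polynomial algebra over $\Ha^0$ in the single generator $x_1$ for each of the two products, and the associated regularization maps $\mathrm{reg}^{\ast},\mathrm{reg}^{\shuffle}\colon\Ha^1\to\Ha^0$ restrict to the identity on $\Ha^0$ while remaining compatible with $\ast$ and $\shuffle$ respectively. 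Theorem \ref{shufflestufflecombi} is exactly the base case $u=x_1$ of the resulting regularized double shuffle relations; the general case then follows by writing $u$ as a polynomial in $x_1$ and reducing to this base case. This settles $\supseteq$ with the tools already available here.

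The reverse inclusion $\subseteq$ is the entire substance of the conjecture and is where all the difficulty sits. Writing $\mathcal{Z}_k$ for the $\Q$-span of the weight-$k$ MZVs and $D_k$ for the weight-$k$ part of the right-hand side, the inclusion amounts to the assertion that $\zeta$ induces an isomorphism $\Ha^0_k/D_k\cong\mathcal{Z}_k$ in every weight, i.e. that no relation among MZVs survives beyond the double shuffle ones. The only credible route is a dimension count against Zagier's conjecture $\dim_\Q\mathcal{Z}_k=d_k$ with $d_k=d_{k-2}+d_{k-3}$: one would first bound $\dim(\Ha^0_k/D_k)\le d_k$ by producing enough double shuffle relations to cut $\Ha^0_k$ down to the predicted size, a purely combinatorial and linear-algebraic task, and then match this against the arithmetic lower bound $\dim\mathcal{Z}_k\ge d_k$ so as to force $\zeta$ to be injective on the quotient.

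The second half of this count is the main and, at present, inaccessible obstacle. The upper bound $\dim\mathcal{Z}_k\le d_k$ is by now a theorem, obtained through the motivic and mixed-Tate formalism of Goncharov, Deligne--Goncharov, Terasoma and Brown, so the MZVs are known to be at least as constrained as the conjecture predicts. What is missing is the matching lower bound $\dim\mathcal{Z}_k\ge d_k$, which is a $\Q$-linear independence statement for explicit MZVs and hence a transcendence result far beyond current technology — one cannot at present even guarantee that $d_k$ linearly independent MZVs exist for general $k$, the irrationality of $\zeta(3)$ and the transcendence of $\zeta(2)$ being near the edge of what is provable. The honest conclusion of this proposal is therefore asymmetric: the inclusion $\supseteq$ can be proved in full with the material of this chapter, while $\subseteq$ reduces to an open transcendence problem, which is precisely why the statement stands here as a conjecture and not as a theorem.
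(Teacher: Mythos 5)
This statement appears in the paper only as a conjecture, with no proof offered, so there is no proof of the paper's to compare against; your overall framing --- that the inclusion $\supseteq$ is the half accessible with the chapter's tools, while $\subseteq$ amounts to an open $\Q$-linear independence (hence transcendence) problem --- is the correct assessment, and your remarks on the motivic upper bound versus the missing lower bound are sound. The honest, asymmetric conclusion is exactly why the statement stands as a conjecture.

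There is, however, a genuine error in your preliminary step: the claim that $u\ast w-u\shuffle w\in\Ha^0$ for every $u\in\Ha^1$ and $w\in\Ha^0$, because ``the projections onto the non-admissible part cancel,'' is false once $u$ has length greater than $1$. Take $u=x_1x_1=y_1y_1$ and $w=x_0x_1=y_2$. Then $y_1y_1\ast y_2=y_1y_1y_2+y_1y_2y_1+y_1y_3+y_2y_1y_1+y_3y_1$, whose non-admissible part is $y_1y_1y_2+y_1y_2y_1+y_1y_3$, while $x_1x_1\shuffle x_0x_1=x_1x_1x_0x_1+2x_1x_0x_1x_1+3x_0x_1^3$, whose non-admissible part is $y_1y_1y_2+2y_1y_2y_1$. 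The difference of these non-admissible parts is $x_1x_0^2x_1-x_1x_0x_1x_1\neq0$, so $u\ast w-u\shuffle w\notin\Ha^0$ and $\zeta$ cannot be applied to it as written. Your single test case $u=x_1$ is precisely the one case where the cancellation does occur (which is why Theorem \ref{shufflestufflecombi} can be stated without regularization). The correct formulation of the conjecture (Ihara--Kaneko--Zagier) applies a regularization map $\mathrm{reg}_\shuffle\colon\Ha^1_\shuffle\to\Ha^0_\shuffle$ to $u\ast w-u\shuffle w$ before evaluating $\zeta$, and the paper's loose statement should be read that way; since you already invoke the polynomial-algebra structure $\Ha^1_\bullet\cong\Ha^0_\bullet[x_1]$ in your second paragraph, the repair is to route the entire right-hand side through $\mathrm{reg}_\shuffle$ rather than to assert it already lies in $\Ha^0$. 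Two smaller caveats: the reduction of the general-$u$ case of $\supseteq$ to the base case $u=x_1$ is not the routine induction you suggest (the interplay of the two regularizations is exactly where the work lies), and the ``purely combinatorial'' bound $\dim_\Q(\Ha^0_k/D_k)\leq d_k$ is itself open for general $k$ --- what the motivic theory gives is a surjection of the double-shuffle quotient onto the motivic MZV algebra, i.e.\ the opposite inequality --- so even the first half of your proposed dimension count is not currently available.
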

For $k\geq 1$, let us define the spaces 
\begin{equation*}
    Z_k=\big\langle\zeta(\s)\;\big|\; \s \text{ is admissible and }|\s|=k\big\rangle_\Q
\end{equation*}
and let us denote their dimensions by $d_k=\dim_\Q Z_k$. The first values are given by $d_1=0$, $d_2=1$ because $\zeta(2)=\frac{\pi^2}{6}\not\in\Q$, $d_3=1$ because $\zeta(2,1)=\zeta(3)\not\in\Q$ and $d_4=1$ due to the previous example and the fact that $\zeta(4)=\frac{\pi^4}{90}\not\in\Q$. At weight $k=5$ the dimension seems to be bounded by $d_5\leq2$, as we have 
\begin{equation*}
    Z_5\subseteq \Q\langle \zeta(5),\zeta(3)\zeta(2)\rangle.
\end{equation*}
The conjectured dimensions for larger $k$ are given in Table \ref{table:dimensions}.
\begin{table}[H]
\centering
\begin{tabular}{l|rrrrrrrrrrrrrrrr}
$k$ & 1& 2& 3& 4& 5& 6& 7& 8& 9& 10& 11& 12& 13& 14& 15&16\\
\hline
$d_k$ & 0 & 1& 1& 1& 2& 2& 3& 4& 5& 7& 9& 12& 16& 21& 28& 37
\end{tabular}
\caption{Conjectured dimensions $d_k$.}
\label{table:dimensions}
\end{table}
More generally, we have the following conjecture regarding these values.
\begin{conjecture*}
For $k\geq3$ we have the following recurrence:
\begin{equation*}
    d_k=d_{k-3}+d_{k-2}.
\end{equation*}
\end{conjecture*}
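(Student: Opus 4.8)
The plan is to prove the two inequalities $d_k \le D_k$ and $d_k \ge D_k$ separately, where $D_k$ denotes the sequence defined by the very recurrence $D_k = D_{k-2} + D_{k-3}$ together with the initial data $D_0 = 1$, $D_1 = 0$, $D_2 = 1$ read off from the table; equivalently $\sum_{k\ge 0} D_k t^k = (1 - t^2 - t^3)^{-1}$. Once both inequalities are in hand we get $d_k = D_k$ for all $k$, and since $D_k$ satisfies the stated recurrence by construction, so does $d_k$. Thus the whole problem splits cleanly into an upper bound, which is a spanning statement, and a lower bound, which is a linear independence statement.

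First I would attack the upper bound $d_k \le D_k$ by exhibiting a spanning set of $Z_k$ of cardinality exactly $D_k$. The natural candidate is the Hoffman set
\[
    H_k = \big\{\,\zeta(s_1,\dots,s_l) : s_i \in \{2,3\}\text{ for all }i,\ s_1 + \dots + s_l = k \,\big\}.
\]
Counting compositions of $k$ into parts equal to $2$ and $3$ by conditioning on the last part shows $|H_k| = |H_{k-2}| + |H_{k-3}|$ with the same initial values, so $|H_k| = D_k$. It therefore suffices to prove that every admissible $\zeta(\s)$ of weight $k$ lies in $\langle H_k\rangle_\Q$. My strategy would be a reduction algorithm: using the double shuffle relations coming from Theorems \ref{homommzvstuffle}, \ref{homommzvshuffle} and their combination \ref{shufflestufflecombi}, together with duality (Theorem \ref{dualitymzv}) and the families in Theorems \ref{ohnosrelations} and \ref{hoffmansrelations}, one tries to rewrite any index carrying an entry $s_i \ge 4$ or an entry $s_i = 1$ in terms of indices that are closer to having all entries in $\{2,3\}$, arguing termination by a monovariant (weight is preserved while the number of forbidden entries decreases).

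The main obstacle appears precisely here. It is not known that the elementary relations assumed in this chapter suffice to carry out such a reduction: the regularised double shuffle relations are only conjectured, not proven, to generate all $\Q$-linear relations among MZVs, and even granting them, termination to the set $H_k$ is delicate. The only known proof that $H_k$ spans $Z_k$, and hence of $d_k \le D_k$, passes through the motivic theory: one lifts the MZVs to motivic periods, where the graded dimensions are governed by the action of the motivic Galois group, and invokes Brown's theorem that the motivic analogues of $H_k$ generate. This machinery lies well beyond the combinatorial relations established above, so within the present framework only the weakest cases of the upper bound are accessible.

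Finally, the lower bound $d_k \ge D_k$ amounts to the $\Q$-linear independence of the elements of $H_k$, that is, to the absence of any further relations. This is a transcendence statement and it is the genuinely hard part: every identity collected in this thesis only ever \emph{produces} relations and hence upper bounds on $d_k$, and no unconditional method is known to bound $\dim_\Q Z_k$ from below beyond the smallest weights, where one merely uses the irrationality of $\zeta(2)$, $\zeta(3)$ and $\zeta(4)$. Consequently, while the upper bound is a deep but established theorem, the lower bound — and with it the equality $d_k = D_k$ and the stated recurrence — remains open, and any complete proof would require substantial new transcendence input rather than further manipulation of the relations developed here.
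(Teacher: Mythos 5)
The paper gives no proof of this statement: it is Zagier's dimension conjecture, stated as a \texttt{conjecture*} environment precisely because it is open, so there is no argument of the paper's to compare yours against. Your assessment of the situation is accurate and you were right not to manufacture a proof. The reformulation as $d_k=D_k$, with $D_k$ counting compositions of $k$ into parts $2$ and $3$ (so that $D_k=D_{k-2}+D_{k-3}$ with $D_0=1$, $D_1=0$, $D_2=1$, matching the table), is the standard one. The upper bound $d_k\le D_k$ is indeed a theorem (Goncharov, Terasoma; Brown for the stronger statement that the Hoffman elements $\zeta(s_1,\dots,s_l)$ with all $s_i\in\{2,3\}$ span $Z_k$), but as you say its proof goes through mixed Tate motives and is far beyond the double-shuffle, duality and Ohno relations developed in this thesis; a purely combinatorial reduction algorithm to the Hoffman set is not known to terminate, and even granting the full regularised double shuffle relations one only conjecturally recovers all relations. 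The lower bound $d_k\ge D_k$ is a $\Q$-linear independence statement for which no method whatsoever is known: already $d_5=2$ would require, e.g., that $\zeta(5)$ and $\zeta(2)\zeta(3)$ be linearly independent over $\Q$, which is open. The honest conclusion you reach --- that the recurrence cannot be established with the tools of this paper or indeed with any currently available tools --- is the correct one, and is consistent with the paper's own decision to leave the statement as a conjecture.
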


\subsection{Multiple harmonic sums}
\label{subsection:multiple harmonic sums}
A natural thing to do is to look at the direct multiplication of two MZVs. Consider the finite version of the multiple zeta values, given by
\begin{equation*}
    H(s_1,\dots,s_l;N)=H(\s;N)\coloneqq \sum_{N\geq n_1>\dots>n_l>0}\frac{1}{n_1^{s_1}\cdots n_l^{s_l}}.
\end{equation*}
These sums will be referred to as \textit{multiple harmonic sums}. Since the sums are finite, they are also defined for non-admissible indices. For a word $x_\s\in \Ha^1$ we will set $H(x_\s;N)=H(\s;N)$.
\begin{proposition}
\label{stufflemhs}
For any integer $N$ admissible words $v,w\in\Ha^1$ we have
\begin{equation*}
    H(v;N)\cdot H(w;N)=H(v\ast w;N).
\end{equation*}
\end{proposition}
\noindent\textit{Proof (Theorem \ref{homommzvstuffle}):}
If $v,w$ are admissible, the multiple harmonic sums converge to its corresponding multiple zeta values
\begin{equation*}
    \pushQED{\qed} 
\lim_{N\to\infty}H(s_1,\dots,s_l;N)=\zeta(s_1,\dots,s_l).\qedhere
\popQED
\end{equation*}

\noindent\textit{Proof (Proposition \ref{stufflemhs}):} We will apply induction on the sum of the lengths of $v$ and $w$. We may assume that the words $v$ and $w$ are non-empty, so let us consider the words $v=y_{s_1}y_{s_2}\cdots y_{s_l}$ and $w=y_{t_1}y_{t_2}\cdots y_{t_k}$. Note that we have the immediate relation
\begin{equation}
\label{mhs}
    H(s_1,\dots,s_l;N)=\sum_{n_1=1}^N\frac{1}{n_1^{s_1}}H(s_2,\dots,s_l;n_1-1).
\end{equation}
Now, if we split the product
\begin{align*}
    H(v;N)\cdot H(w;N)=H(s_1,\dots,s_l;N)\cdot H(t_1,\dots,t_k;N)&=\sum_{\substack{N\geq n_1>\dots>n_l>0\\N\geq m_1>\dots>m_l>0}}\frac{1}{n_1^{s_1}\cdots n_l^{s_l}}\;\frac{1}{m_1^{t_1}\cdots m_k^{t_k}}
    % &=\sum_{N\geq n_1,m_1\geq 1}\frac{1}{n_1^{s_1}m_1^{t_1}}\left(\sum_{\substack{n_1-1\geq n_2>\dots>n_l\geq1\\ m_1-1\geq m_2>\dots>m_l\geq1}}\frac{1}{n_2^{s_2}\cdots n_l^{s_l}m_2^{t_2}\cdots m_k^{t_k}}\right)\\
    % &=\sum_{N\geq n_1,m_1\geq 1}\frac{1}{n_1^{s_1}m_1^{t_1}}H(s_2,\dots,s_l;n_1-1)H(t_2,\dots,t_k;m_1-1).
\end{align*}
into the cases where $n_1>m_1$, $n_1<m_1$ and $n_1=m_1$, and apply relation (\ref{mhs}), we obtain
\begin{align*}
    H(s_1,\dots,s_l;N)H(t_1,\dots,t_k;N)&=\sum_{ n_1=1}^N\frac{1}{n_1^{s_1}}H(s_2,\dots,s_l;n_1-1)H(t_1,\dots,t_k;n_1-1)\\
    &\quad+\sum_{ m_1=1}^N\frac{1}{m_1^{t_1}}H(s_1,\dots,s_l;m_1-1)H(t_2,\dots,t_k;m_1-1)\\
    &\quad+\sum_{ m_1=1}^N\frac{1}{m_1^{s_1+t_1}}H(s_2,\dots,s_l;m_1-1)H(t_2,\dots,t_k;m_1-1).\\
\end{align*}
When we apply the induction hypothesis this becomes
\begin{align*}
    H(v;N)\cdot H(w;N)&=\sum_{ n_1=1}^N\frac{1}{n_1^{s_1}}H(y_{s_2}\cdots y_{s_l}\ast y_{t_1}\cdots y_{t_k};n_1-1)\\
    &\quad+\sum_{ m_1=1}^N\frac{1}{m_1^{t_1}}H(y_{s_1}\cdots y_{s_l}\ast y_{t_2}\cdots y_{t_k};m_1-1)\\
    &\quad+\sum_{ m_1=1}^N\frac{1}{m_1^{s_1+t_1}}H(y_{s_2}\cdots y_{s_l}\ast y_{t_2}\cdots y_{t_k};m_1-1).
\end{align*}
Finally, we apply (\ref{mhs}) again to obtain the required relation
\begin{align*}
    \pushQED{\qed} H(v;N)H(w;N)&=H(y_{s_1}(y_{s_2}\cdots y_{s_l}\ast y_{t_2}\cdots y_{t_k});N)+H(y_{s_1}\cdots y_{s_l}\ast y_{t_2}\cdots y_{t_k};N)\\
    &\quad+H(y_{s_1+t_1}(y_{s_2}\cdots y_{s_l}\ast y_{t_2}\cdots y_{t_k});N)\\
    &=H(v\ast w;N).\qedhere\popQED
\end{align*}

\subsection{Generalised polylogarithms}
\label{subsection:generalised polylogarithms}
To prove Theorem \ref{homommzvshuffle} we introduce the \textit{generalised polylogarithms}. For an arbitrary index $\s=(s_1,\dots,s_l)$ and a real $-1<z<1$, they are defined as
\begin{equation*}
    \Li_\s(z)=\sum_{n_1>\dots>n_l>0}\frac{z^{n_1}}{n_1^{s_1}\cdots n_l^{s_l}}.
\end{equation*}
Since we have the evaluation $\Li_\s(1)=\zeta(s)$, whenever $\s$ is admissible, the generalised polylogarithms can be seen as a generalisation of the multiple zeta values. As before, we set $\Li_{\mathbf{1}}(z)=1$ and $\Li_{x_\s}(z)=\Li_\s(z)$ for any admissible word, and extend this definition linearly to $\Ha^0$.\\

Let us consider the non-empty word $x_0v$, where $v$ is an arbitrary word $v\in\Ha^1$. When we differentiate with respect to $z$, we obtain
\begin{equation*}
    \frac{d}{dz}\Li_{x_0v}(z)=\sum_{n_1>\dots>n_l>0}\frac{n_1z^{n_1-1}}{n_1^{s_1}n_2^{s_2}\cdots n_l^{s_l}}=\frac{1}{z}\sum_{n_1>\dots>n_l>0}\frac{z^{n_1}}{n_1^{s_1-1}n_2^{s_2}\cdots n_l^{s_l}}=\frac{1}{z}\Li_v(z).
\end{equation*}
Similarly, when we regard the word $x_1v$, we have
\begin{equation*}
    \frac{d}{dz}\Li_{x_1v}(z)=\sum_{n_1>\dots>n_l>0}\frac{n_1z^{n_1-1}}{n_1n_2^{s_2}\cdots n_l^{s_l}}=\sum_{n_2>\dots>n_l>0}\sum_{n_1=0}^\infty z^{n_1}\frac{z^{n_2}}{n_2^{s_2}\cdots n_l^{s_l}}=\frac{1}{1-z}\Li_v(z).
\end{equation*}
This means that for simplicity, we can write $\frac{d}{dz}\Li_{x_iv}(z)=\omega_i(z)\Li_v(z)$, with 
\begin{equation*}
    \omega_{x_i}(z)=\omega_i(z)\coloneqq \begin{cases}\frac{1}{z}&\text{ if }i=0\\\frac{1}{1-z}&\text{ if }i=1 \end{cases}.
\end{equation*}
With this property between generalised polylogarithms, we are able to prove the following proposition.
\begin{proposition}
The map $\Li\colon \Ha^0_\shuffle\to C([0,1],\R)$ is a homomorphism of algebras, i.e. for any words $v,w\in\Ha$ we have
\begin{equation*}
    \Li_{v\shuffle w}(z)=\Li_v(z)\cdot\Li_w(z).
\end{equation*}
\end{proposition}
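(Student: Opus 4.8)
The plan is to prove the identity $\Li_{v\shuffle w}(z)=\Li_v(z)\Li_w(z)$ by induction on the total number of letters $l(v)+l(w)$ occurring in $v$ and $w$, regarding it first as an identity of functions on the open interval $(-1,1)$ for arbitrary words $v,w$. The essential observation is that both sides become tractable under differentiation: by the rule $\frac{d}{dz}\Li_{x_iv}(z)=\omega_i(z)\Li_v(z)$ established just above, differentiating strips off a leading letter and hence lowers the weight of the words involved, which is precisely the descent an inductive argument needs. I would therefore show that the two sides have the same derivative and agree at the single point $z=0$, and then invoke the fact that two differentiable functions on an interval with equal derivatives differ by a constant.

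For the base case, if either $v$ or $w$ is the empty word $\mathbf{1}$, the defining rule gives $\mathbf{1}\shuffle w=w$ together with $\Li_{\mathbf{1}}(z)=1$, so both sides equal $\Li_w(z)$. For the inductive step, write $v=x_iv'$ and $w=x_jw'$ with $i,j\in\{0,1\}$. The recursive definition of the shuffle product gives $v\shuffle w=x_i(v'\shuffle x_jw')+x_j(x_iv'\shuffle w')$, so by linearity of $\Li$ and the differentiation rule,
\[
    \frac{d}{dz}\Li_{v\shuffle w}(z)=\omega_i(z)\Li_{v'\shuffle x_jw'}(z)+\omega_j(z)\Li_{x_iv'\shuffle w'}(z).
\]
Each of the two shuffles on the right involves strictly fewer letters in total, so the induction hypothesis rewrites them as $\Li_{v'}(z)\Li_{x_jw'}(z)$ and $\Li_{x_iv'}(z)\Li_{w'}(z)$. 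On the other hand, applying the product rule to $\Li_v(z)\Li_w(z)=\Li_{x_iv'}(z)\Li_{x_jw'}(z)$ and then the differentiation rule to each factor yields exactly the same expression. Hence both sides have identical derivatives on $(-1,1)$.

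It then remains to pin down the constant of integration. Since $v$ and $w$ are nonempty in the inductive step, every word occurring in $v\shuffle w$ is nonempty, and for any nonempty word the series defining $\Li$ begins with $z^{n_1}$ where $n_1\geq1$; thus $\Li_{v\shuffle w}(0)=0=\Li_v(0)\Li_w(0)$. The two sides therefore coincide at $z=0$, and consequently on all of $(-1,1)$. Finally, when $v,w\in\Ha^0$ are admissible, both begin with $x_0$, so every word in $v\shuffle w$ again begins with $x_0$ and is admissible; by the convergence results for admissible indices the relevant functions extend continuously to $z=1$, and the identity on $[0,1)$ propagates to $z=1$ by continuity, which both places the map in $C([0,1],\R)$ and yields the homomorphism property. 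The main technical point to justify carefully is the legitimacy of the termwise differentiation underlying the rule $\frac{d}{dz}\Li_{x_iv}=\omega_i\Li_v$ — one checks that the defining power series converge uniformly on compact subsets of $(-1,1)$ — together with the continuity at the endpoint $z=1$, which is the only place where admissibility is genuinely needed.
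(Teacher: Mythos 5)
Your proposal is correct and follows essentially the same route as the paper's proof: induction on $l(v)+l(w)$, using the differentiation rule $\frac{d}{dz}\Li_{x_iv}=\omega_i\Li_v$ together with the recursive definition of $\shuffle$ to show both sides have the same derivative, then fixing the constant by evaluating at $z=0$. The extra care you take with the base case, the justification of termwise differentiation, and the continuity at $z=1$ for admissible words is welcome but does not change the argument.
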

\proof
Let $x_iv$ and $x_jw$ be two words. We will prove the statement using induction on the sum of lengths $l(x_iv)+l(x_jw)$. Namely, we have
\begin{align*}
    \frac{d}{dz}\Li_{x_iv}(z)\Li_{x_jw}(z)&=\left(\frac{d}{dz}\Li_{x_iv}(z)\right)\Li_{x_jw}(z)+\Li_{x_iv}(z)\left(\frac{d}{dz}\Li_{x_jw}(z)\right)\\
    &=\omega_i(z)\Li_{v}(z)\Li_{x_jw}(z)+\omega_j(z)\Li_{x_iv}(z)\Li_w(z)\\
    &=\omega_i(z)\Li_{v\shuffle x_jw}(z)+\omega_j(z)\Li_{x_iv\shuffle w}(z)\\
    &=\frac{d}{dz}\left(\Li_{x_i(v\shuffle x_jw)}(z)+\Li_{x_j(x_iv\shuffle w)}(z)\right)\\
    &=\frac{d}{dz}\Li_{x_iv\shuffle x_jw}(z).
\end{align*}
This means that we have $\Li_{x_iv}(z)\Li_{x_jw}(z)=\Li_{x_iv\shuffle x_jw}(z)+C$ for some constant $C$. Substituting $z=0$ gives us $C=0$, which finishes the proof.\qed\\

\noindent\textit{Proof (Theorem \ref{homommzvshuffle}):} Let $v$ and $w$ be two admissible words. Then the previous proposition gives us
\begin{equation*}
    \pushQED{\qed} \zeta(v\shuffle w)=\lim_{z\to 1}\Li_{v\shuffle w}(z)=\lim_{z\to 1}\Li_{v}(z)\Li_{w}(z)=\zeta(v)\zeta(w).\qedhere\popQED
\end{equation*}

\noindent\textbf{Remark.}
% Proposition \ref{stufflel=1} follows directly by applying Theorem \ref{homommzvstuffle} to the stuffle product
% \begin{equation*}
%     y_s\ast y_t=y_sy_t+y_ty_s+y_{s+t}.
% \end{equation*}
Consider the shuffle product $x_0^{s-1}x_1\shuffle x_0^{t-1}x_1$. One way to evaluate this product is to compare the words $x_0^{s-1}x_1$ and $x_0^{t-1}x_1$ to decks of cards, consisting of $s-1$ and $t-1$ cards marked ``$0$'', followed by one card marked ``$1$'', respectively. Then every term appearing in the evaluation of $x_0^{s-1}x_1\shuffle x_0^{t-1}x_1$ corresponds to one possible way to riffle shuffle these two decks of cards (hence the name). Every shuffle is of the form
\begin{equation*}
    \underbrace{\vphantom{H^N_N}0\; 0 \;\cdots\; 0 \; 0}_{s-1 \text{ cards}}\; 1\hphantom{1}+\hphantom{1}\underbrace{\vphantom{H^N_N}0\; 0 \;\cdots\; 0 \; 0}_{t-1 \text{ cards}}\; 1 \hphantom{1}\xRightarrow{\text{shuffle}}\hphantom{1}
    \underbrace{\vphantom{H^N_N}0\; 0 \;\cdots\; 0 \; 0}_{j-1 \text{ cards}}\; \vertarrowbox{1}{middle ``$1$''-card}\; \underbrace{\vphantom{H^N_N}0\; 0 \;\cdots\; 0 \; 0}_{s+t-j-1 \text{ cards}}\; 1,
\end{equation*}
for some $j$. If we assume that the middle ``$1$''-card is the ``$1$''-card in the deck corresponding to $x_0^{s-1}x_1$, then the $j-1$ ``$0$''-cards consist of all $s-1$ ``$0$''-cards from the deck corresponding to $x_0^{s-1}x_1$ and the remaining $j-s+1$ ``$0$''-cards come from the deck corresponding to $x_0^{t-1}x_1$. And so there are ${j-1 \choose s-1}$ possible arrangements of these cards. Similarly, if the middle ``$1$''-card is the ``$1$''-card from the word $x_0^{t-1}x_1$, there are ${j-1\choose t-1}$ possible arrangements. We find the expansion
\begin{equation*}
    x_0^{s-1}x_1\shuffle x_0^{t-1}x_1 = \sum_{j=1}^{s+t-1}\left({j-1\choose t-1}+{j-1\choose s-1}\right)x_0^{j-1}x_1x_0^{s+t-j-1}x_1,
\end{equation*}
so that Proposition \ref{homommzvshuffle} follows. This can be made rigorous by induction on $s+t$.

\subsection{Connected sums}
\label{subsection:connected sums}
Finally, we will give a proof of Theorem \ref{dualitymzv}. Originally, Theorem \ref{dualitymzv} was proven by a change of variable in some integral representation of the multiple zeta values. We will however, follow a different proof, given by Seki and Yamamoto \cite{duality}. The proof uses the following identity.
\begin{lemma}
\label{factoriallemma}
For integers $n>0$ and $m\geq 0$ the identity 
\begin{equation*}
    \sum_{k=m+1}^\infty\frac{1}{k}\;\frac{k!n!}{(k+n)!}=\frac{1}{n}\;\frac{m!n!}{(m+n)!}
\end{equation*}
holds.
\end{lemma}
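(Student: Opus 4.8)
The plan is to establish
$$\sum_{k=m+1}^\infty\frac{1}{k}\,\frac{k!\,n!}{(k+n)!}=\frac{1}{n}\,\frac{m!\,n!}{(m+n)!}$$
by recognizing the summand as a telescoping difference. The crucial observation will be that the factor $\frac{1}{k}\frac{k!n!}{(k+n)!}=\frac{(k-1)!\,n!}{(k+n)!}$ can be written as a difference of consecutive terms of the sequence $a_k=\frac{1}{n}\,\frac{(k-1)!\,n!}{(k+n-1)!}$. Concretely, I would compute $a_k-a_{k+1}$ and verify that it equals the $k$-th summand on the left-hand side.

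**Setting up the telescoping.** First I would define $a_k \coloneqq \dfrac{1}{n}\,\dfrac{(k-1)!\,n!}{(k+n-1)!}$ for $k\geq 1$ and then calculate the difference directly:
\begin{align*}
    a_k-a_{k+1}&=\frac{1}{n}\,\frac{(k-1)!\,n!}{(k+n-1)!}-\frac{1}{n}\,\frac{k!\,n!}{(k+n)!}\\
    &=\frac{1}{n}\,\frac{(k-1)!\,n!}{(k+n)!}\bigl((k+n)-k\bigr)\\
    &=\frac{(k-1)!\,n!}{(k+n)!}=\frac{1}{k}\,\frac{k!\,n!}{(k+n)!}.
\end{align*}
This identifies the $k$-th summand on the left as $a_k-a_{k+1}$, so the partial sums telescope.

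**Concluding via convergence.** With the telescoping in hand, the finite partial sum from $k=m+1$ to $k=K$ collapses to $a_{m+1}-a_{K+1}$. I would then note that
$$a_{m+1}=\frac{1}{n}\,\frac{m!\,n!}{(m+n)!},$$
which is exactly the claimed right-hand side, and that $a_{K+1}=\frac{1}{n}\,\frac{K!\,n!}{(K+n)!}\to 0$ as $K\to\infty$, since $\frac{K!}{(K+n)!}=\frac{1}{(K+1)(K+2)\cdots(K+n)}$ is a product of $n\geq 1$ factors each tending to zero (for fixed $n$ it behaves like $K^{-n}$). Taking the limit $K\to\infty$ in the telescoped expression then yields the identity.

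**Main obstacle.** There is no deep obstacle here; the entire difficulty is the bookkeeping of finding the correct antiderivative sequence $a_k$. The natural guess of pulling out one unit of $n$ in the denominator to obtain the prefactor $\frac{1}{n}$ is what makes the difference $(k+n)-k=n$ cancel cleanly, and spotting this factorization is the only non-mechanical step. Once $a_k$ is chosen correctly, both the telescoping verification and the vanishing of the tail $a_{K+1}$ are routine.
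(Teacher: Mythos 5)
Your proposal is correct and uses exactly the same telescoping decomposition as the paper, namely writing the summand as $\frac{1}{n}\frac{(k-1)!\,n!}{(k+n-1)!}-\frac{1}{n}\frac{k!\,n!}{(k+n)!}$. You are in fact slightly more careful than the paper in explicitly verifying that the tail term $a_{K+1}$ tends to zero.
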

\begin{proof}
The statement follows directly by noting that the series is a telescoping series with 
\begin{equation*}
    \frac{1}{k}\;\frac{k!n!}{(k+n)!}=\frac{1}{n}\;\frac{(k-1)!n!}{(k+n-1)!}-\frac{1}{n}\;\frac{k!n!}{(k+n)!}.\qedhere
\end{equation*}
\end{proof}
% \begin{proof}
% We apply induction to $n$. If $n=1$, we have
% \begin{align*}
%     \sum_{k=m+1}^\infty\frac{1}{k}\;\frac{k!}{(k+1)!}&=\sum_{k=m+1}^\infty\frac{1}{k(k+1)}=\sum_{k=m+1}^\infty\left(\frac{1}{k}-\frac{1}{k+1}\right)=\frac{1}{m+1}=\frac{m!}{(m+1)!}.
% \end{align*}
% Now suppose the statement is true for some integer $n$. We have
% \begin{align*}
%     \sum_{k=m+1}^\infty\frac{k!(n+1)!}{k(k+n+1)!}&=\sum_{k=m+1}^\infty\frac{n+1}{k(n+k+1)}\;\frac{k!n!}{(k+n)!}\\
%     &=\sum_{k=m+1}^\infty\frac{1}{k}\;\frac{k!n!}{(k+n)!}-\sum_{k=m+1}^\infty\frac{1}{k+n+1}\;\frac{k!n!}{(n+k)!}\\
%     &=\frac{m!n!}{n(m+n)!}-\sum_{k=m+2}^\infty\frac{k!n!}{k(n+k)!}\\
%     &=\frac{m!n!}{n(m+n)!}-\frac{(m+1)!n!}{n(m+n+1)!}\\
%     &=\frac{m!(n+1)!}{(n+1)(m+n+1)!},
% \end{align*}
% which was to be shown.
% \end{proof}

\noindent \textit{Proof (Theorems \ref{duality} \& \ref{dualitymzv}).} For two multi-indices $\s=(s_1,\dots,s_l)$ and $\mathbf{t}=(t_1,\dots,t_p)$ we define the \textit{connected sum} to be the series
\begin{equation*}
    Z(x_\s,x_\mathbf{t})=Z(\s,\mathbf{t})\coloneqq\sum_{\substack{n_1>\dots>n_l>0\\m_1>\dots>m_p>0}}\frac{n_1!m_1!}{(n_1+m_1)!}\cdot\prod_{i=1}^l\frac{1}{n_i^{s_i}}\prod_{j=1}^p\frac{1}{m_j^{t_j}}.
\end{equation*}
Let $\s=(s_1,\dots,s_l)$ be an admissible index. We have the immediate equalities between the connected sums and the multiple zeta values $\zeta(\s)=\zeta(x_\s)=Z(x_\s,\varnothing)=Z(\varnothing, x_\s)$. We claim that 
\begin{equation}
\label{eq:connectedsum1}
    Z(s_1+1,s_2,\dots,s_l;t_1,\dots,t_p)=Z(s_1,\dots,s_l;1,t_1,\dots,t_p).
\end{equation}
The claim follows by applying Lemma \ref{factoriallemma} to the connected sum. Namely, we have
\begin{align*}
    Z(s_1+1,s_2,\dots,s_l;t_1,\dots,t_p)&=\sum_{m_1>\dots>m_p>0}\frac{1}{m_1^{t_1}\cdots m_p^{t_p}}\sum_{n_1>\dots>n_l>0}\frac{1}{n_1^{s_1}\cdots n_l^{s_l}}\;\frac{1}{n_1}\;\frac{n_1!m_1!}{(n_1+m_1)!}\\
    &=\sum_{m_1>\dots>m_p>0}\frac{1}{m_1^{t_1}\cdots m_p^{t_p}}\sum_{n_1>\dots>n_l>0}\frac{1}{n_1^{s_1}\cdots n_l^{s_l}}\sum_{k=m_1+1}^\infty\frac{1}{k}\;\frac{k!n_1!}{(k+n_1)!}\\
    &=\sum_{k>m_1>\dots>m_p>0}\frac{1}{km_1^{t_1}\cdots m_p^{t_p}}\sum_{n_1>\dots>n_l>0}\frac{1}{n_1^{s_1}\cdots n_l^{s_l}}\;\frac{k!n_1!}{(k+n_1)!}\\
    &=Z(s_1,\dots,s_l;1,t_1,\dots,t_p).
\end{align*}
This proves the claim. Then by symmetry we also have
\begin{equation}
\label{eq:connectedsum2}
    Z(1,s_1,\dots,s_l;t_1,\dots,t_p)=Z(s_1,\dots,s_l;t_1+1,\dots,t_p).
\end{equation}
The statement of the theorem follows by repeated use of equations (\ref{eq:connectedsum1}) and (\ref{eq:connectedsum2}). Namely, we have
\begin{align*}
    Z(\s;\varnothing)=Z(s_1,s_2\dots,s_l;\varnothing)&=Z(u_1+1,\underbrace{1,\dots,1}_{t_1-1\text{ times}},u_2+1,\underbrace{1,\dots,1}_{t_2-1\text{ times}},\dots,u_p+1,\underbrace{1,\dots,1}_{t_p-1\text{ times}};\varnothing)\\
    &=Z(u_1,\underbrace{1,\dots,1}_{t_1-1\text{ times}},u_2+1,\underbrace{1,\dots,1}_{t_2-1\text{ times}},\dots,u_p+1,\underbrace{1,\dots,1}_{t_p-1\text{ times}};1)\\
    &=Z(u_1-1,\underbrace{1,\dots,1}_{t_1-1\text{ times}},u_2+1,\underbrace{1,\dots,1}_{t_2-1\text{ times}},\dots,u_p+1,\underbrace{1,\dots,1}_{t_p-1\text{ times}};1,1)\\
    &\qquad\qquad\qquad\qquad\quad\vdots\\
    &=Z(1,\underbrace{1,\dots,1}_{t_1-1\text{ times}},u_2+1,\underbrace{1,\dots,1}_{t_2-1\text{ times}},\dots,u_p+1,\underbrace{1,\dots,1}_{t_p-1\text{ times}};1,\underbrace{1,\dots,1}_{u_1-1\text{times}})\\
    &=Z(\underbrace{1,\dots,1}_{t_1-1\text{ times}},u_2+1,\underbrace{1,\dots,1}_{t_2-1\text{ times}},\dots,u_p+1,\underbrace{1,\dots,1}_{t_p-1\text{ times}};2,\underbrace{1,\dots,1}_{u_1-1\text{times}})\\
    &\qquad\qquad\qquad\qquad\quad\vdots\\
    &=Z(u_2+1,\underbrace{1,\dots,1}_{t_2-1\text{ times}},\dots,u_p+1,\underbrace{1,\dots,1}_{t_p-1\text{ times}};t_1+1,\underbrace{1,\dots,1}_{u_1-1\text{times}})\\
    &=Z(u_2,\underbrace{1,\dots,1}_{t_2-1\text{ times}},\dots,u_p+1,\underbrace{1,\dots,1}_{t_p-1\text{ times}};1,t_1+1,\underbrace{1,\dots,1}_{u_1-1\text{times}})\\
    &\qquad\qquad\qquad\qquad\quad\vdots\\
    &=Z(\varnothing;t_k+1,\underbrace{1,\dots,1}_{u_k-1\text{ times}},t_{k-1}+1,\underbrace{1,\dots,1}_{u_{k-1}-1\text{ times}},\dots,t_1+1,\underbrace{1,\dots,1}_{u_1-1\text{ times}})\\
    &=Z(\varnothing;\s^\dagger).
\end{align*}

% \begin{align*}
%     Z(s_1,\dots,s_l,1;t_1,\dots,t_p)&=\sum_{m_p>\dots>m_1>0}\frac{1}{m_1^{t_1}\cdots m_p^{t_p}}\sum_{n>n_l>\dots>n_1>0}\frac{1}{n_1^{s_1}\cdots n_{l}^{s_{l}}}\;\frac{1}{n}\;\frac{n!m_p!}{(n+m_p)!}\\
%     &=\sum_{m_p>\dots>m_1>0}\frac{1}{m_1^{t_1}\cdots m_p^{t_p}}\sum_{n_l>\dots>n_1>0}\frac{1}{n_1^{s_1}\cdots n_l^{s_l}}\sum_{n=n_l+1}^\infty\frac{1}{n}\;\frac{n!m_p!}{(n+m_p)!}\\
%     &=\sum_{m_p>\dots>m_1>0}\frac{1}{m_1^{t_1}\cdots m_p^{t_p}}\sum_{n_l>\dots>n_1>0}\frac{1}{n_1^{s_1}\cdots n_l^{s_l}}\;\frac{1}{m_p}\;\frac{n_l!m_p!}{(n_l+m_p)!}\\
%     &=Z(s_1,\dots,s_l;t_1,\dots,t_p+1).
% \end{align*}

\noindent\textbf{Example.} The smallest non-trivial example is
\begin{equation*}
    \zeta(2,1)=Z(2,1;\varnothing)=Z(1,1;1)=Z(1;2)=Z(\varnothing;3)=\zeta(3).
\end{equation*}

\section{$q$-Analogues of the multiple zeta values}
When studying mathematics, it is sometimes rewarding to introduce another unknown. For example, generating functions arise this way. Other examples include the multiple harmonic sums and the polylogarithms from Sections \ref{subsection:multiple harmonic sums} and \ref{subsection:generalised polylogarithms}, respectively. This additional unknown may lead to more general identities, which could lead to a better understanding of the studied matter. In this chapter we will give different generalisations of the multiple zeta values by, in a helpful way, introducing the argument $0\leq q<1$. The idea is that, whenever we take the limit $q\to1^-$ of these generalisations, we return to multiple zeta values.
\subsection{$q$-Multiple zeta values}
\label{subsection:qmultiplezetavalues}
One way to generalise the multiple zeta values is through the limit
\begin{equation*}
    n=\lim_{q\to1^-}1+q+q^2+\dots+q^{n-1}=\lim_{q\to1^-}\frac{1-q^n}{1-q}.
\end{equation*}
For an integer $n>0$ we define the series
\begin{equation*}
    [n]_q=[n]=\frac{1-q^n}{1-q},
\end{equation*}
which in some sense generalises the integer $n$. For an admissible index $\s=(s_1,\dots,s_l)$ we define the $q$\textit{-multiple zeta value} to be the $q$-series
\begin{equation*}
    \zeta_q(\s)\coloneqq\sum_{n_1>\dots>n_l>0}\frac{q^{n_1(s_1-1)}\cdots q^{n_l(s_l-1)}}{[n_1]^{s_1}\cdots[n_l]^{s_l}}.
\end{equation*}
And so by construction, the $q$-multiple zeta value satisfies $\lim_{q\to1^-}\zeta_q(\s)=\zeta(\s)$. Note that we could have taken a different numerator when defining the $q$-multiple zeta value. In fact, we would have
\begin{equation*}
    \lim_{q\to1^-}\sum_{n_1>\dots>n_l>0}\frac{P(n_1,s_1;q)\cdots P(n_l,s_l;q)}{[n_1]^{s_1}\cdots[n_l]^{s_l}}=\zeta(\s)
\end{equation*}
for any polynomials $P(n_i,s_i;q)$ with $\lim_{q\to1^-}P(n_i,s_i;q)=1$. In Section \ref{section:monobrackets} we will consider a different $q$-analogue by considering different polynomials $P_(n,s)$.

\noindent\textbf{Example.} We have
\begin{align*}
    \zeta_q(3)&=q^4 + q^8 - 5q^9 + 15q^{10} - 35q^{11} + 71q^{12} - 131q^{13} + 220q^{14} +\dots,\\
    \zeta_q(4,1)&=q^2 + 2q^4 - 7q^5 + 19q^6 - 41q^7 + 70q^8 - 95q^9 + 111q^{10} + \dots\;.
\end{align*}

We will demonstrate the power of these $q$-generalisations by giving a proof of Ohno's relations (Theorem \ref{ohnosrelations}), by first proving some relation between $q$-series, and then take the limit $q\to 1^-$, which will give us the desired relation on the level of MZVs. The proof is due to S. Seki and S. Yamamoto, \cite{duality}, and uses a $q$-generalisation of the connected sums from Section \ref{subsection:connected sums}. For indices $\s=(s_1,\dots,s_l)$ and $\mathbf{t}=(t_1,\dots,t_p)$ we define the $q$-generalised connected sums as
\begin{align*}
    Z_q(\s;\mathbf{t};x)&\coloneqq \sum_{\substack{n_1>\dots>n_l>0\\m_1>\dots>m_p>0}}\frac{q^{n_1m_1}f_q(n_1;x)f_q(m_1;x)}{f_q(n_1+m_1;x)}\prod_{i=1}^l\frac{q^{(s_i-1)n_i}}{([n_i]-q^{n_i}x)[n_i]^{s_i-1}}\\
    &\qquad\cdot\prod_{j=1}^p\frac{q^{(t_j-1)m_j}}{([m_j]-q^{m_j}x)[m_j]^{t_j-1}}.
\end{align*}
Here, for an integer $n\geq0$, the series $f_q(n;x)$ are defined as
\begin{equation*}
    f_q(n;x)\coloneqq\prod_{j=1}^n([j]-q^j x).
\end{equation*}

When we evaluate $x=0$, we see that $Z_q(\s;\varnothing;0)=Z_q(\varnothing;\s;0)=\zeta_q(\s)$. Then, by taking the limit $q\to 1^-$, the series $Z_q(\s,\mathbf{t};0)$ becomes the connected sum from Section \ref{subsection:connected sums}. These $q$-connected sums also satisfy the usual duality.
\begin{theorem}
\label{dualityqx}
Let the multi-index $\s$ be admissible. The equality 
\begin{equation*}
    Z_q(\s;\varnothing;x)=Z_q(\varnothing;\s^\dagger;x)
\end{equation*}
holds.
\end{theorem}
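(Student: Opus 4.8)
The plan is to follow the connected-sum argument used for Theorems \ref{duality} and \ref{dualitymzv}, carrying the parameter $x$ through the whole computation. That proof rested on two transport relations, (\ref{eq:connectedsum1}) and (\ref{eq:connectedsum2}), together with the symmetry of the connected sum in its two index arguments; I would reproduce each of these at the level of the $q$-connected sums $Z_q(\s;\mathbf{t};x)$ and then iterate exactly as in the displayed computation concluding the proof of Theorem \ref{duality}, with the convention $m_1=0$, $f_q(0;x)=1$ governing the empty index that starts and ends the chain.

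The heart of the matter is a $q$-deformation of Lemma \ref{factoriallemma}. The kernel coupling the two innermost (largest) summation variables is $\dfrac{q^{nm}f_q(n;x)f_q(m;x)}{f_q(n+m;x)}$, and the claim I would establish is the telescoping identity
\[
    \frac{q^{n}}{[n]}\,\frac{q^{nm}f_q(m;x)}{f_q(n+m;x)}=\sum_{k=m+1}^\infty\frac{1}{[k]-q^kx}\,\frac{q^{nk}f_q(k;x)}{f_q(n+k;x)}.
\]
I would prove this by exhibiting the antiderivative $A(k)=\dfrac{q^{n}}{[n]}\,\dfrac{q^{nk}f_q(k;x)}{f_q(n+k;x)}$, for which each summand equals $A(k-1)-A(k)$. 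The verification reduces to the recursion $f_q(j;x)=f_q(j-1;x)([j]-q^jx)$ together with the elementary cancellation
\[
    \bigl([n+k]-q^{n+k}x\bigr)-q^{n}\bigl([k]-q^kx\bigr)=[n],
\]
after which the series collapses to $A(m)$, once one checks that $A(k)\to0$ as $k\to\infty$. The latter holds for $0\le q<1$ because the denominator $f_q(n+k;x)$ carries $n$ more factors than $f_q(k;x)$, each of size roughly $(1-q)^{-1}$, and the weight $q^{nk}$ decays.

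Granting this lemma, the first transport relation follows: in $Z_q(s_1+1,s_2,\dots,s_l;\mathbf{t};x)$ the factor attached to $n_1$ splits off an extra $\dfrac{q^{n_1}}{[n_1]}$, and applying the lemma with $n=n_1$, $m=m_1$ converts this, together with the connecting kernel, into a sum over a new largest second-index variable $k$ carrying precisely the index-$1$ weight $\dfrac{1}{[k]-q^kx}$; relabelling yields $Z_q(s_1,\dots,s_l;1,\mathbf{t};x)$, the analogue of (\ref{eq:connectedsum1}). Since the kernel is symmetric under $n_1\leftrightarrow m_1$ and the two families of variables enter the sum identically, one has $Z_q(\s;\mathbf{t};x)=Z_q(\mathbf{t};\s;x)$; combining this symmetry with the first relation produces the analogue of (\ref{eq:connectedsum2}), namely $Z_q(1,\mathbf{u};\mathbf{v};x)=Z_q(\mathbf{u};v_1+1,v_2,\dots;x)$. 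Feeding these two moves into the same telescoping chain of index manipulations transforms $Z_q(\s;\varnothing;x)$ into $Z_q(\varnothing;\s^\dagger;x)$.

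The main obstacle is the $q$-telescoping lemma itself: guessing the correct antiderivative $A(k)$ and, above all, verifying the cancellation $([n+k]-q^{n+k}x)-q^{n}([k]-q^kx)=[n]$, which is exactly what makes the $x$-dependence telescope cleanly. Everything downstream is a faithful transcription of the classical connected-sum bookkeeping. A secondary technical point is to justify the interchange of the infinite summations and the vanishing of the tail $A(k)$, but for $0\le q<1$ these are routine.
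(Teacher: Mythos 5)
Your proposal is correct and follows essentially the same route as the paper: the identical $q$-telescoping lemma (your antiderivative $A(k)$ and the cancellation $([n+k]-q^{n+k}x)-q^{n}([k]-q^{k}x)=[n]$ are exactly the computation in the paper's proof, up to the common factor $f_q(n;x)$), the same two transport relations obtained from it and from the symmetry of the kernel, and the same iteration as in the proof of Theorems \ref{duality} and \ref{dualitymzv}. The only addition is your explicit check of the vanishing tail and the interchange of summations, which the paper leaves implicit.
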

\begin{proof}
The proof is similar to that of Theorem \ref{duality} and \ref{dualitymzv}. We again claim that
\begin{equation*}
    Z_q(s_1+1,s_2,\dots,s_l;t_1,\dots,t_p;x)=Z_q(s_1,\dots,s_l;1,t_1,\dots,t_p;x).
\end{equation*}
We have the following identity
\begin{align*}
    &\frac{q^{(k-1)n}f_q(k-1;x)f_q(n;x)}{f_q(k+n-1;x)}-\frac{q^{kn}f_q(k;x)f_q(n;x)}{f_q(k+n;x)}\\
    &\qquad=\frac{q^{(k-1)n}f_q(k-1;x)f_q(n;x)\big([k+n]-q^{k+n}x\big)-q^{(k-1)n}f_q(k-1;x)f_q(n;x)\big([k]-q^kx\big)}{f_q(k+n;x)}\\
    &\qquad=\frac{\big([k+n]-q^n[k]\big)q^{(k-1)n}f_q(k-1;x)f_q(n;x)}{f_q(k+n;x)}\\
    &\qquad=\frac{[n]q^{(k-1)n}f_q(k-1;x)f_q(n;x)}{f_q(k+n;x)}.
\end{align*}
This means that, analogously to Lemma \ref{factoriallemma}, we have the telescoping series
\begin{align*}
    &\sum_{k=m+1}^\infty \frac{1}{[k]-q^kx}\; \frac{q^{kn}f_q(k;x)f_q(n;x)}{f_q(k+n;x)}\\
    &\qquad=\sum_{k=m+1}^\infty \frac{q^n}{[n]}\left(\frac{q^{(k-1)n}f_q(k-1;x)f_q(n;x)}{f_q(k+n-1;x)}-\frac{q^{kn}f_q(k;x)f_q(n;x)}{f_q(k+n;x)} \right)\\
    &\qquad=\frac{q^n}{[n]}\;\frac{q^{mn}f_q(m;x)f_q(n;x)}{f_q(m+n;x)}.
\end{align*}
By symmetry, we also have the relation
\begin{equation*}
    Z_q(1,s_1,\dots,s_l;t_1,\dots,t_p;x)=Z_q(s_1,\dots,s_l;t_1+1,\dots,t_p;x).
\end{equation*}
The statement then follows by repeated use of these two identities.
\end{proof}
\begin{theorem}
\label{qsumeduality}
For an admissible index $\s=(s_1,\dots,s_l)$ we define the sum
\begin{equation*}
    S_q(\s;c)\coloneqq \sum_{\substack{c_1,\dots,c_l\geq 0\\c_1+\dots+c_l=c}}\zeta_q(s_1+c_1,\dots,s_l+c_l).
\end{equation*}
These quantities then satisfy the equality
\begin{equation*}
    S_q(\s;c)=S_q(\s^\dagger;c).
\end{equation*}
\end{theorem}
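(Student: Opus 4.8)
The plan is to recognise $Z_q(\s;\varnothing;x)$ as the generating function of the quantities $S_q(\s;c)$ in the auxiliary variable $x$, and then to invoke Theorem \ref{dualityqx} together with a comparison of coefficients. First I would specialise the definition of the $q$-connected sum to an empty second index: taking $\mathbf{t}=\varnothing$ removes the $m$-summation together with the connecting factor, leaving
\[
Z_q(\s;\varnothing;x)=\sum_{n_1>\dots>n_l>0}\prod_{i=1}^l\frac{q^{(s_i-1)n_i}}{([n_i]-q^{n_i}x)[n_i]^{s_i-1}}.
\]

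The key step is to expand each factor $\tfrac{1}{[n_i]-q^{n_i}x}$ as a geometric series in $x$, namely $\sum_{c_i\geq0}\tfrac{q^{c_in_i}}{[n_i]^{c_i+1}}x^{c_i}$, which is valid for $x$ near $0$ (indeed $\sup_{n\geq1}q^n/[n]=q<1$, so the expansion converges for $|x|<1/q$). The $i$-th factor then becomes $\sum_{c_i\geq0}\tfrac{q^{(s_i+c_i-1)n_i}}{[n_i]^{s_i+c_i}}x^{c_i}$, whose coefficient of $x^{c_i}$ is precisely the $i$-th factor of the summand defining $\zeta_q(s_1+c_1,\dots,s_l+c_l)$. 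Multiplying out the product over $i$ and summing over $n_1>\dots>n_l>0$, the monomial $x^{c_1+\dots+c_l}$ collects exactly $\zeta_q(s_1+c_1,\dots,s_l+c_l)$; grouping terms by total degree $c=c_1+\dots+c_l$ gives
\[
Z_q(\s;\varnothing;x)=\sum_{c\geq0}\Bigg(\sum_{\substack{c_1,\dots,c_l\geq0\\c_1+\dots+c_l=c}}\zeta_q(s_1+c_1,\dots,s_l+c_l)\Bigg)x^c=\sum_{c\geq0}S_q(\s;c)\,x^c.
\]

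The identical computation applied to the empty \emph{first} index yields $Z_q(\varnothing;\s^\dagger;x)=\sum_{c\geq0}S_q(\s^\dagger;c)\,x^c$; here one uses that $\s^\dagger$ is again admissible (its first entry $t_k+1\geq2$), so each coefficient is a finite sum of convergent $q$-multiple zeta values. Theorem \ref{dualityqx} asserts $Z_q(\s;\varnothing;x)=Z_q(\varnothing;\s^\dagger;x)$, and comparing the coefficients of $x^c$ on the two sides delivers the desired equality $S_q(\s;c)=S_q(\s^\dagger;c)$.

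The step I expect to require the most care is the justification of the term-by-term expansion, since it amounts to interchanging the geometric expansions of the $l$ factors with the multi-index summation over $n_1>\dots>n_l$. The cleanest way around this is to read the identity as one between formal power series in $x$: for each fixed $c$ the coefficient $S_q(\s;c)$ is a finite $\Q$-linear combination of convergent $q$-multiple zeta values, so the comparison of coefficients is unambiguous and no genuinely analytic interchange is needed.
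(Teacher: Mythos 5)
Your argument is exactly the paper's: expand $Z_q(\s;\varnothing;x)$ as a geometric series in $x$ to identify it with $\sum_{c\geq0}S_q(\s;c)x^c$, do the same for $Z_q(\varnothing;\s^\dagger;x)$, and compare coefficients via Theorem \ref{dualityqx}. Your write-up is correct (and in fact records the exponents $[n_i]^{s_i+c_i}$ more carefully than the paper's displayed computation, besides addressing the interchange-of-summation point), so there is nothing to add.
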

\begin{proof}
The statement follows from the equality
\begin{align*}
    Z_q(\s,\varnothing;x)&=\sum_{n_1>\dots>n_l>0}\prod_{i=1}^l\frac{q^{(s_i-1)n_i}}{([n_i]-q^{n_i}x)[n_i]^{s_i-1}}\\
    &=\sum_{n_1>\dots>n_l>0}\prod_{i=1}^l\frac{q^{(s_i-1)n_i}}{[n_i]^{s_i-1}}\sum_{c_i=0}^\infty \frac{q^{c_in_i}x^{c_i}}{[n_i]^{c_i}}\\
    &=\sum_{n_1>\dots>n_l>0}\prod_{i=1}^l\sum_{c_i=0}^\infty \frac{q^{(c_i+s_i-1)n_i}}{[n_i]^{c_i+s_i-1}}x^{c_i}\\
    &=\sum_{c=0}^\infty\sum_{\substack{c_1,\dots,c_l\geq0\\c_1+\dots+c_l=c}}^\infty\prod_{i=1}^l\sum_{n_1>\dots>n_l>0} \frac{q^{(c_i+s_i-1)n_i}}{[n_i]^{c_i+s_i-1}}x^c=\sum_{c=0}^\infty S_q(\s;c)x^c
\end{align*}
and Theorem \ref{dualityqx}, by considering the coefficient at $x^c$.
\end{proof}

\noindent\textit{Proof (Theorem \ref{ohnosrelations}):} Ohno's relations follows by taking the limit $q\to1^-$ in the identity $S_q(\s;c)=S_q(\s^\dagger;c)$ from Theorem \ref{qsumeduality} .\qed\\

One requirement we could set for our $q$-generalisation is that it, similar to the MZV case, satisfies some stuffle or shuffle product. I.e., are there algebraic interpretations of the generalised multiple zeta values such that there are products $\ast_q$ and $\shuffle_q$ which satisfy 
\begin{equation*}
    \zeta_q(v\astq w)=\zeta_q(v\shuffle_q w)=\zeta_q(v)\zeta_q(w)\text{?}
\end{equation*}
Indeed, let us consider the alphabet $A=\{y_s\}_{s>0}$ and the $\Q$-algebra $A_q=\Q\langle q\rangle\langle A\rangle$ generated by $A$. On $A_q$ we inductively define the product $\ast_q$ by 
\begin{equation*}
    u\astq \mathbf{1}=\mathbf{1}\astq u=u,
\end{equation*}
for any letter $u$ and the empty letter $\mathbf{1}$, and 
\begin{equation*}
    y_sv\ast y_tw=y_s(v\ast y_tw)+y_t(y_sv\ast w)+\big(y_{s+t}+(1-q)y_{s+t-1}\big)(v\ast w),
\end{equation*}
for the words $y_sv$ and $y_tw$. Finally we extend this definition linearly over $A_q$. The product $\astq$ is commutative and associative, so that it defines the stuffle algebra $\mathfrak{A}_q=(A_q,\astq)$. The map $\zeta_q$ acts on it by
\begin{equation*}
    \zeta_q(y_{s_1}y_{s_2}\cdots y_{s_l})=\zeta_q(s_1,s_2,\dots,s_l).
\end{equation*}
Additionally, we set $\zeta_q(qv)=q\zeta_q(v)$.
\begin{lemma}
The map $\zeta_q\colon\mathfrak{A}_q\to C([0,1),\R)$ is a homomorphism of algebras, i.e. for any words $v,w\in\mathfrak{A}_q$ we have 
\begin{equation*}
    \zeta_q(v\astq w)=\zeta_q(v)\cdot \zeta_q(w).
\end{equation*}
\end{lemma}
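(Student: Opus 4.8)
The plan is to mirror the proof of the classical stuffle relation in Proposition~\ref{stufflemhs}: introduce a finite truncation, establish the product identity at the truncated level by induction, and then pass to the limit $N\to\infty$. For an index $\s=(s_1,\dots,s_l)$ and $N\geq 0$ I would set
\begin{equation*}
    H_q(\s;N)=\sum_{N\geq n_1>\dots>n_l>0}\frac{q^{n_1(s_1-1)}\cdots q^{n_l(s_l-1)}}{[n_1]^{s_1}\cdots[n_l]^{s_l}},
\end{equation*}
put $H_q(y_{s_1}\cdots y_{s_l};N)=H_q(\s;N)$, and extend $\Q\langle q\rangle$-linearly over $A_q$ (with $q$ acting as the formal variable, consistent with $\zeta_q(qv)=q\zeta_q(v)$). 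Then $\lim_{N\to\infty}H_q(\s;N)=\zeta_q(\s)$ for admissible $\s$, so it suffices to prove the stronger finite identity $H_q(v;N)\cdot H_q(w;N)=H_q(v\astq w;N)$ for all $v,w\in A_q$ and all $N$, and afterwards take $N\to\infty$.

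To prove the finite identity I would induct on the sum of lengths, exactly as in Proposition~\ref{stufflemhs}. The base case, where $v$ or $w$ is the empty word, is immediate. For $v=y_{s_1}\cdots y_{s_l}$ and $w=y_{t_1}\cdots y_{t_k}$, I would use the recursion
\begin{equation*}
    H_q(s_1,\dots,s_l;N)=\sum_{n=1}^N\frac{q^{n(s_1-1)}}{[n]^{s_1}}H_q(s_2,\dots,s_l;n-1)
\end{equation*}
and split the double sum defining $H_q(v;N)H_q(w;N)$ according to whether $n_1>m_1$, $n_1<m_1$, or $n_1=m_1$ for the two largest summation variables. After applying the induction hypothesis and reassembling via the recursion, the first two cases reproduce the contributions $y_{s_1}(v'\astq w)$ and $y_{t_1}(v\astq w')$, where $v'=y_{s_2}\cdots y_{s_l}$ and $w'=y_{t_2}\cdots y_{t_k}$; these match the first two terms of the defining rule for $\astq$.

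The crux of the argument is the diagonal case $n_1=m_1=n$, whose summand carries the leading factor
\begin{equation*}
    \frac{q^{n(s_1-1)}}{[n]^{s_1}}\cdot\frac{q^{n(t_1-1)}}{[n]^{t_1}}=\frac{q^{n(s_1+t_1-2)}}{[n]^{s_1+t_1}}.
\end{equation*}
This is precisely where the correction term $(1-q)y_{s+t-1}$ in the definition of $\astq$ must be accounted for: since the leading letter $y_m$ contributes the factor $q^{n(m-1)}/[n]^m$ to $H_q$, the combination $y_{s_1+t_1}+(1-q)y_{s_1+t_1-1}$ contributes
\begin{equation*}
    \frac{q^{n(s_1+t_1-1)}}{[n]^{s_1+t_1}}+(1-q)\frac{q^{n(s_1+t_1-2)}}{[n]^{s_1+t_1-1}}=\frac{q^{n(s_1+t_1-2)}}{[n]^{s_1+t_1}}\bigl(q^n+(1-q)[n]\bigr).
\end{equation*}
The key identity is then $(1-q)[n]=1-q^n$, which gives $q^n+(1-q)[n]=1$, so this combination contributes exactly the diagonal leading factor above. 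Hence the diagonal case reproduces $\bigl(y_{s_1+t_1}+(1-q)y_{s_1+t_1-1}\bigr)(v'\astq w')$, the third term of the $\astq$ rule. Summing the three cases and applying the recursion once more yields $H_q(v\astq w;N)$, completing the induction; letting $N\to\infty$ proves the lemma.

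I expect the only real subtlety to be the bookkeeping in the diagonal case, and in particular the cancellation $q^n+(1-q)[n]=1$, since this single identity is exactly what forces the extra $(1-q)y_{s+t-1}$ summand into the definition of $\astq$ and is the sole point where the argument departs from the classical case. The commutativity and associativity of $\astq$ asserted before the lemma can be checked by the same style of induction, but they are not required for the homomorphism property itself.
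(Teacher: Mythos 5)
Your proposal is correct and follows essentially the same route as the paper: the paper also splits the product into the cases $n_1>m_1$, $n_1<m_1$, $n_1=m_1$ and resolves the diagonal case via the identity $(1-q)\frac{q^{n(s+t-2)}}{[n]^{s+t-1}}+\frac{q^{n(s+t-1)}}{[n]^{s+t}}=\frac{q^{n(s-1)}}{[n]^{s}}\cdot\frac{q^{n(t-1)}}{[n]^{t}}$, which is exactly your cancellation $q^n+(1-q)[n]=1$. Your explicit introduction of the truncated sums $H_q(\s;N)$ only makes the implicit induction of the paper's argument more rigorous; it is not a different method.
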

\begin{proof}
If one of the words $v,w$ is empty there is nothing to prove. For integers $n,s,t>0$ we have
\begin{align*}
    (1-q)\frac{q^{n(s+r-2)}}{[n]^{s+r-1}}+\frac{q^{n(s+r-1)}}{[n]^{s+r}}&=\frac{(1-q)^{s+r}q^{n(s+r-2)}(1-q^n)}{(1-q^n)^{s+r-1}(1-q^n)}+\frac{q^{n(s+r-1)}(1-q)^{s+r}}{(1-q^n)^{s+r}}\\
    &=\frac{(1-q)^{s+r}[q^{n(s+r-2)}-(1-q)^{n(s+r-1)}+(1-q)^{n(s+r-1)}]}{(1-q^n)^{s+r}}\\
    &=\frac{q^{n(s+r-2)}(1-q)^{s+r-1}}{(1-q^n)^{r+s-1}}\\
    &=\frac{q^{n(s-1)}}{[n]^s}\;\frac{q^{n(r-1)}}{[n]^r}.
\end{align*}
Now suppose we have the words $v=y_{s_1}u=y_{s_1}y_{s_2}\cdots y_{s_l}$ and $w=y_{t_1}u'=y_{t_1}y_{t_2}\cdots y_{t_k}$. With the equation above we are able to calculate
\begin{align*}
    \zeta_q(v)\zeta_q(w)&=\sum_{\substack{n_1>\dots>n_l>0 \\ m_1>\dots>m_k>0}}\prod_{i=1}^l\frac{q^{n_i(s_i-1)}}{[n_i]^{s_i}}\prod_{j=1}^k\frac{q^{m_j(r_j-1)}}{[m_j]^{r_j}}\\
    &=\sum_{\substack{n_1>\dots>n_l>0 \\ m_1>\dots>m_k>0\\n_1<m_1}}\cdots \;\;+\sum_{\substack{n_1>\dots>n_l>0 \\ m_1>\dots>m_k>0\\m_1<n_1}}\cdots\;\;+\sum_{\substack{n_1>\dots>n_l>0 \\ m_1>\dots>m_k>0\\n_1=m_1}}\cdots\\
    &=\zeta_q(y_{s_1}(u\astq y_{t_1}u'))+\zeta_q(y_{t_1}(y_{s_1}u\astq u'))+(1-q)\zeta_q(y_{s_1+t_1-1}(u\ast u'))\\
    &\quad+\zeta_q(y_{s_1+t_1}(u\astq u'))\\
    &=\zeta_q(v\astq w),
\end{align*}
which was to be shown.
\end{proof}
Despite the fact that the $q$-multiple zeta values satisfy a stuffle relation, there is no known shuffle relation among these $q$-analogues.

\subsection{Mono-brackets}
\label{section:monobrackets}
% \begin{enumerate}
%     \item Definitie (met divisors).
%     \item Andere schrijfwijze: met $\rho(x)$.
%     \item Stuffle relatie $\ast$ in $\MD$.
%     \item Gebrek aan shuffle relatie. Verwijs naar bi-brackets.
%     \item De ruimte $\qMZ$. Deze is invariant onder $\ast$.
%     \item De operator $\delta=q\frac{d}{dq}$.
%     \item De ruimte $\MD$ is invariant onder $\delta$.
%     \item Net als $\qMZ$.
% \end{enumerate}
In \cite{bachmon} Bachmann defined another $q$-analogue of the multiple zeta values. For integers $s_1,\dots,s_l>0$ we define the $q$-power series
\begin{equation*}
    \bbracket{s_1,\dots,s_l}=\frac{1}{(s_1-1)!\cdots(s_l-1)!}\sum_{\substack{n_1>\cdots>n_l>0\\d_1,\dots,d_l>0}}d_1^{s_1-1}\cdots d_l^{s_l-1}q^{n_1d_1+\dots+n_ld_l}.
\end{equation*}
These $q$-power series are called mono-brackets, and the $\Q$-algebra they span is denoted by $\MD$. For a bracket $\bbracket{s_1,\dots,s_l}$ we define its weight as the sum $s_1+\dots+s_l$ and its length as the integer $l$. The algebra $\MD$ can then be graded by either weight or length.\\

\noindent\textbf{Example.} Some of the smaller mono-brackets are given by:
\begin{align*}
    \bbracket{1}&=q+2q^2+2q^3+3q^4+2q^5+4q^6+2q^7+\dots,\\
    \bbracket{2}&=q+3q^2+4q^3+7q^4+6q^5+12q^6+8q^7+\dots,\\
    \bbracket{2,1}&=q^3+2q^4+6q^5+7q^6+15q^8+18q^8+25q^9+\dots,\\
    \bbracket{4,3,2,1}&=\frac{1}{12}\big(q^{10} + 2q^{11} + 6q^{12} + 15q^{13} + 39q^{14} + 63q^{15} + \dots\big).
\end{align*}
Note that, for a fixed integer $s>0$, we have
\begin{equation*}
    \sum_{d>0}d^{s-1}x^{d}=\left(x\frac{d}{dx}\right)^{s-1}\frac{x}{1-x}=\frac{P_s(x)}{(1-x)^s},
\end{equation*}
for some polynomial $P_s(x)$ depending only on $s$. This means that we can rewrite the mono-brackets as
\begin{align*}
    \bbracket{s_1,\dots,s_l}=\frac{1}{(s_1-1)!\cdots(s_l-1)!}\sum_{n_1>\cdots>n_l>0}\frac{P_{s_1}(q^{n_1})\cdots P_{s_l}(q^{n_l})}{(1-q^{n_1})^{s_1}\cdots(1-q^{n_l})^{s_l}}.
\end{align*}
The polynomials $P_s(x)$ satisfy the relation
\begin{align*}
    \frac{P_{s+1}(x)}{(1-x)^{s+1}}=x\frac{d}{dx}\frac{P_s(x)}{(1-x)^s}=\frac{P'_s(x)(1-x)x+sP_s(x)x}{(1-x)^{s+1}},
\end{align*}
so that we have the inductive relation $P_{s+1}(x)=P'_s(x)(1-x)x+sP_s(x)x$. Because we have the polynomial identity $P_1(x)\equiv1$, we find that $P_{s+1}(1)=sP_{s}(1)=s!$. We see that the mono-brackets satisfy
\begin{equation}
\label{eq:limitmonobrackets}
    \lim_{q\to 1^-}(1-q)^{s_1+\dots+s_l}\bbracket{s_1,\dots,s_l}=\zeta(s_1,\dots,s_l),
\end{equation}
whenever $s_1>1$, so that they are indeed a $q$-generalisation of the multiple zeta values.

\subsection{The algebra of mono-brackets}
\label{subsection:algebraofmonobrackets}
The multiple zeta values satisfy both stuffle and shuffle relations. It is a natural question if the mono-brackets also satisfy some type of stuffle or shuffle relation.\\

Recall the polylogarithms from Section \ref{subsection:generalised polylogarithms}, which are defined by
\begin{equation*}
    \Li_{s}(z)=\sum_{n>0}\frac{z^n}{n^s}.
\end{equation*}
If $s<1$, the polylogarithm $\Li_{1-s}(z)$ becomes the rational function $P_s(z)/(1-z)^s$. This means that we can write the mono-brackets as
\begin{equation*}
    \bbracket{s_1,\dots,s_l}=\sum_{n_1>\dots>n_l>0}\frac{\Li_{1-s_1}(q^{n_1})\cdots\Li_{1-s_l}(q^{n_l})}{(s_1-1)!\cdots(s_l-1)!}.
\end{equation*}
\begin{lemma}
\label{homomorphismmonobracketsl=1}
The product of two mono-brackets of length $1$ is given by
\begin{align*}
    \bbracket{s}\cdot\bbracket{t}&=\bbracket{s,t}+\bbracket{t,s}+\bbracket{s+t}+\sum_{j=1}^{s}\lambda^j_{s,t}\bbracket{j}+\sum_{j=1}^t\lambda_{t,s}^j\bbracket{j},
\end{align*}
where the coefficients $\lambda^j_{s,t}$ and $\lambda^j_{t,s}$ are given by
\begin{equation*}
    \lambda^j_{a,b}\coloneqq(-1)^{b-1}{a+b-j-1\choose a-j}\frac{B_{a+b-j}}{(a+b-j)!}.
\end{equation*}
Here, the numbers $B_k$ are the Bernoulli numbers.
\end{lemma}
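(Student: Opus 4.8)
The plan is to expand the product straight from the definition and separate the diagonal from the off-diagonal part, exactly as in the proof of the stuffle Proposition~\ref{stufflel=1}. Writing $\bbracket{s}=\frac{1}{(s-1)!}\sum_{n,d>0}d^{s-1}q^{nd}$, one gets
\[
\bbracket{s}\bbracket{t}=\frac{1}{(s-1)!(t-1)!}\sum_{n,m>0}\ \sum_{d,e>0}d^{s-1}e^{t-1}q^{nd+me}.
\]
Splitting the outer summation into the three regions $n>m$, $m>n$ and $n=m$, the first two regions reassemble (after relabelling $n,m,d,e$) into $\bbracket{s,t}$ and $\bbracket{t,s}$ respectively. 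Everything therefore reduces to showing that the diagonal contribution $n=m$ equals $\bbracket{s+t}+\sum_{j=1}^{s}\lambda^j_{s,t}\bbracket{j}+\sum_{j=1}^{t}\lambda^j_{t,s}\bbracket{j}$.

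For the diagonal, setting $x=q^n$, the inner sum is $\big(\sum_{d>0}d^{s-1}x^d\big)\big(\sum_{e>0}e^{t-1}x^e\big)$, so the coefficient of $x^c$ is the convolution
\[
C_{s,t}(c):=\sum_{d=1}^{c-1}d^{s-1}(c-d)^{t-1},
\]
which is a polynomial in $c$ of degree $s+t-1$. Since $\bbracket{j}=\frac{1}{(j-1)!}\sum_{n,c>0}c^{j-1}q^{nc}$, once we expand $C_{s,t}(c)=\sum_{j}\mu_j\,c^{j-1}$ in the monomial basis the diagonal becomes $\frac{1}{(s-1)!(t-1)!}\sum_j \mu_j\,(j-1)!\,\bbracket{j}$. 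Thus the entire statement is equivalent to a purely polynomial identity, namely the monomial expansion of $C_{s,t}$.

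To carry out that expansion I would use the exact Euler--Maclaurin (Faulhaber) formula, which has no remainder here because $f(d)=d^{s-1}(c-d)^{t-1}$ is a polynomial in $d$. The leading integral term is $\int_0^c x^{s-1}(c-x)^{t-1}\,dx=\frac{(s-1)!(t-1)!}{(s+t-1)!}\,c^{s+t-1}$, and after the normalisation this produces precisely $\bbracket{s+t}$ with coefficient $1$. The correction terms split into two families, attached to the two endpoints $d=0$ and $d=c$. Computing $f^{(k-1)}(0)$ by Leibniz, only the $(s-1)$-fold derivative of $d^{s-1}$ survives at $0$, so these corrections carry a factor $\binom{k-1}{s-1}$ and a power $c^{s+t-1-k}$; writing $k=s+t-j$ and using $\frac{1}{e^D-1}=\sum_k\frac{B_k}{k!}D^{k-1}$ turns them into Bernoulli-weighted terms $B_{s+t-j}\binom{s+t-j-1}{t-j}$, i.e.\ exactly $\sum_{j\le t}\lambda^j_{t,s}\bbracket{j}$ after using $\binom{s+t-j-1}{s-1}=\binom{s+t-j-1}{t-j}$. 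The endpoint $d=c$ produces $\sum_{j\le s}\lambda^j_{s,t}\bbracket{j}$ symmetrically, and the binomial $\binom{a+b-j-1}{a-j}$ in $\lambda^j_{a,b}$ vanishes for $j>a$, so the two ranges are encoded by the formula itself.

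The main obstacle is the coefficient bookkeeping, and in particular pinning down the signs. The factor $(-1)^{b-1}$ has to be extracted from the $(-1)^{k-s}$ produced by differentiating $(c-x)^{t-1}$ together with the sign of $B_k$; this comes out correctly only because the odd Bernoulli numbers $B_{2k+1}$ ($k\ge1$) vanish, so a parity argument on $s+t-j$ reconciles $(-1)^{t-j}$ with $(-1)^{s-1}$. A further point needing separate care is the boundary case $\min(s,t)=1$: then $f$ does not vanish at the corresponding endpoint, so the passage between $\sum_{d=1}^{c-1}$ and $\sum_{d=0}^{c-1}$ (resp.\ $\sum_{d=1}^{c}$) contributes an extra term that must be folded into the relevant $\lambda$-sum. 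Once these sign and boundary checks are settled, matching the coefficient of each $\bbracket{j}$ against $\lambda^j_{s,t}+\lambda^j_{t,s}$ finishes the proof; the normalisation is confirmed by the smallest case $\bbracket{1}\bbracket{1}=2\bbracket{1,1}+\bbracket{2}+2\lambda^1_{1,1}\bbracket{1}$ with $\lambda^1_{1,1}=B_1=-\tfrac12$, which reproduces $C_{1,1}(c)=c-1$.
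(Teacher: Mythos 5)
Your proposal is correct, and it follows the paper's decomposition exactly up to the key point: both split $\sum_{n,m}$ into $n>m$, $m>n$, $n=m$ and reduce everything to evaluating the diagonal, which in the paper is identity (\ref{eq:generatingLXLY}). Where you genuinely diverge is in how that diagonal identity is established. The paper packages all $(s,t)$ at once into the two-variable generating function $L(X)L(Y)=\frac{z^2e^{X+Y}}{(1-ze^X)(1-ze^Y)}$ and applies the partial-fraction identity $L(X)L(Y)=\frac{1}{e^{X-Y}-1}L(X)+\frac{1}{e^{Y-X}-1}L(Y)$, after which the Bernoulli numbers, the binomials and the signs all fall out of a single coefficient extraction with no case analysis. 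You instead work coefficient-by-coefficient in $q$, identify the diagonal with the convolution polynomial $C_{s,t}(c)=\sum_{d=1}^{c-1}d^{s-1}(c-d)^{t-1}$, and expand it by exact Euler--Maclaurin: the Beta integral gives $\bbracket{s+t}$ and the two endpoint families give the two $\lambda$-sums. This is more elementary and makes the combinatorial meaning of $\lambda^j_{a,b}$ transparent, but it forces you to do by hand what the partial fraction does automatically: the sign reconciliation via vanishing of odd Bernoulli numbers, and the boundary adjustment when $\min(s,t)=1$ (the endpoint where $f$ does not vanish, which is where the $B_1$ term and your check $C_{1,1}(c)=c-1$ live). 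You have flagged both issues correctly and your normalisation checks (the Beta integral giving coefficient $1$, and the case $s=t=1$) confirm the bookkeeping, so the argument closes; note also that the two proofs are at bottom the same computation, since $\frac{1}{e^{D}-1}$ in your Euler--Maclaurin step is precisely the paper's $\frac{1}{e^{X-Y}-1}$ read as an operator in the difference of the generating variables.
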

\begin{proof}
We have 
\begin{align*}
    \bbracket{s}\cdot\bbracket{t}&=\sum_{n,m>0}\frac{\Li_{1-s}(q^n)\Li_{1-t}(q^m)}{(s-1)!(t-1)!}\\
    &=\sum_{n>m>0}\cdots\;+\sum_{m>n>0}\cdots\;+\sum_{n=m>0}\cdots\\
    &=\bbracket{s,t}+\bbracket{t,s}+\sum_{n>0}\frac{\Li_{1-s}(q^n)\Li_{1-t}(q^n)}{(s-1)!(t-1)!}.
\end{align*}
We are left to show, with $z=q^n$, that 
\begin{equation}
    \frac{\Li_{1-s}(z)\Li_{1-t}(z)}{(s-1)!(t-1)!}=\frac{\Li_{1-s-t}(z)}{(s+t-1)!}+\sum_{j=1}^{s}\lambda_{s,t}^j\frac{\Li_{1-j}(z)}{(j-1)!}+\sum_{j=1}^t\lambda_{t,s}^j\frac{\Li_{1-j}(z)}{(j-1)!}.\label{eq:generatingLXLY}
\end{equation}
Consider the generating function of the polylogarithms
\begin{align*}
    L(X)&=\sum_{k\geq0}\frac{\Li_{-k}(z)}{k!}X^k=\sum_{k\geq 0}\sum_{n>0}\frac{n^k}{z^nk!}X^k=\sum_{n>0}z^n\sum_{k\geq0}\frac{(nX)^{k}}{k!}\\
    &=\sum_{n>0}z^ne^{nX}=\sum_{n>0}(ze^X)^n=\frac{ze^X}{1-ze^X}.
\end{align*}
A direct calculation shows that
\begin{align*}
    L(X)L(Y)=\frac{z^2e^{X+Y}}{(1-ze^X)(1-ze^Y)}=\frac{1}{e^{X-Y}-1}L(X)+\frac{1}{e^{Y-X}-1}L(Y).
\end{align*}
Then, by definition of the Bernoulli numbers, we have
\begin{align}
\label{eq:emachtbernoulli}
    L(X)L(Y)&=\sum_{n\geq0}\frac{B_n}{n!}(X-Y)^{n-1}L(X)+\sum_{n\geq0}\frac{B_n}{n!}(Y-X)^{n-1}L(Y)\nonumber\\
    &=\sum_{n>0}\frac{B_n}{n!}(X-Y)^{n-1}L(X)+\sum_{n>0}\frac{B_n}{n!}(Y-X)^{n-1}L(Y)+\frac{L(X)-L(Y)}{X-Y}.
\end{align}
We are interested in the value of $\frac{\Li_{1-s}(z)\Li_{1-t}(z)}{(s-1)!(t-1)!}$, so we want to know the coefficient of $X^{s-1}Y^{t-1}$ in the equation above. First, note that we have
\begin{equation*}
    \frac{L(X)-L(Y)}{X-Y}=\sum_{k\geq0}\frac{\Li_{-k}(z)}{k!}\frac{X^k-Y^k}{X-Y}=\sum_{k\geq0}\frac{\Li_{-k}(z)}{k!}\sum_{j=0}^{k-1}X^jY^{k-1-j},
\end{equation*}
so that the coefficient of $X^{s-1}Y^{t-1}$ equals $\Li_{1-s-t}(z)/(s+t-1)!$. Secondly, we have
\begin{align*}
    \frac{B_n}{n!}(X-Y)^{n-1}L(X)&=\sum_{k\geq0}\sum_{j=0}^{n-1}(-1)^{n-j}\frac{B_n}{n!}{n-1\choose j}\frac{\Li_{-k}(z)}{k!}X^{j+k}Y^{n-1-j},
\end{align*}
whose coefficient of $X^{s-1}Y^{t-1}$ is
\begin{equation*}
    \sum_{j=0}^{s-1}(-1)^t\frac{B_{t+j}}{(t+j)!}{t+j-1\choose j}\frac{\Li_{1-s+j}(z)}{(s-j-1)!}.
\end{equation*}
After the suitable change of variables $j\mapsto s-1-j$, this becomes precisely $\lambda^j_{s,t}\frac{\Li_{1-j}(z)}{(j-1)!}$. The argument for the coefficient of $\frac{B_n}{n!}(X-Y)^{n-1}L(Y)$ is analogous. This completes the proof of (\ref{eq:generatingLXLY}), and finishes the proof of Lemma \ref{homomorphismmonobracketsl=1}.
\end{proof}

\noindent\textbf{Example.} For $s=t=2$ the identity in Lemma \ref{homomorphismmonobracketsl=1} becomes
\begin{align*}
    \bbracket{2}\cdot\bbracket{2}=2\bbracket{2,2}+\bbracket{4}-\frac{1}{6}\bbracket{2}.
\end{align*}
Note that when we apply the limit as in (\ref{eq:limitmonobrackets}), this identity becomes the stuffle relation between MZVs:
\begin{equation*}
    \zeta(2)\zeta(2)=2\zeta(2,2)+\zeta(4).
\end{equation*}
In fact, if $s,t>1$ we see that the weight of the terms $\lambda^j_{s,t}\bbracket{j}$ is always less then $s+t$. This means that when we apply the concerning limit, these terms vanish, and we obtain the stuffle relation
\begin{equation*}
    \zeta(s)\zeta(t)=\zeta(s,t)+\zeta(t,s)+\zeta(s+t).
\end{equation*}
The product in Lemma \ref{homomorphismmonobracketsl=1} can therefore be seen as the bracket version of the stuffle product in length $1$. This suggests that there exists a stuffle product for mono-brackets of all lengths. Indeed this is the case. Consider the alphabet $A=\{z_s\}_{s>0}$. For two letters $z_s,z_t\in A$ we define the product
\begin{equation*}
    z_s\circ z_t= z_{s+t}+\sum_{j=1}^s\lambda_{s,t}^jz_j+\sum_{j=1}^t\lambda_{t,s}^jz_j.
\end{equation*}
For two words in $\Q\langle A\rangle$ we inductively define the stuffle product to be $u\ast \mathbf{1}=\mathbf{1}\ast u=u$ if one of the words is empty, and
\begin{align*}
    z_sv\ast z_tw= z_s(v\ast z_tw)+z_t(z_sv\ast w)+(z_s\circ z_t)(v\ast w)
\end{align*}
otherwise. We extend this definition linearly, so that it defines the stuffle algebra $\Ha^\MD_\ast=(\Q\langle A\rangle,\ast)$. Since the product $\circ$ is symmetric, an inductive argument shows that the stuffle product $\ast$ is commutative and associative.
\begin{proposition}
\label{homomorphismmonobrackets}
The map $\bbracket{\,\cdot\,}\colon \Ha_\ast^\MD\to \MD$ is a homomorphism of $\Q$-algebras. I.e., we have
\begin{equation*}
    \bbracket{v \ast w}=\bbracket{v}\cdot \bbracket{w}.
\end{equation*}
\end{proposition}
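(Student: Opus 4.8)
The plan is to mirror the proof of Proposition \ref{stufflemhs} almost verbatim, running an induction on the total length $l(v)+l(w)$ and using the length-one identity (\ref{eq:generatingLXLY}) to absorb the diagonal contribution. The structural tool is a \emph{restricted} bracket: for a word $v=z_{s_1}\cdots z_{s_l}$ and an integer $N\ge 0$, set
\[
\bbracket{v}^{(N)} \coloneqq \sum_{N\ge n_1>\cdots>n_l>0}\prod_{i=1}^l\frac{\Li_{1-s_i}(q^{n_i})}{(s_i-1)!},
\]
so that $\bbracket{v}=\lim_{N\to\infty}\bbracket{v}^{(N)}$, and, peeling off the largest summation index (which pairs with the first letter $z_{s_1}$), one has the exact analogue of relation (\ref{mhs}):
\[
\bbracket{z_{s_1}v'}^{(N)} = \sum_{n=1}^N\frac{\Li_{1-s_1}(q^n)}{(s_1-1)!}\,\bbracket{v'}^{(n-1)}.
\]
I would prove the stronger identity $\bbracket{v}^{(N)}\bbracket{w}^{(N)}=\bbracket{v\ast w}^{(N)}$ for all $N$ and then let $N\to\infty$, the limit being legitimate because for $0\le q<1$ every series converges absolutely.

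First I would handle the base case, in which one of the words is empty: both sides then collapse to the bracket of the remaining word. For the inductive step, write $v=z_{s_1}u$ and $w=z_{t_1}u'$, expand $\bbracket{v}^{(N)}\bbracket{w}^{(N)}$ as a double sum over the indices $(n_i)$ and $(m_j)$, and split it according to whether the two largest indices satisfy $n_1>m_1$, $n_1<m_1$, or $n_1=m_1$. In the region $n_1>m_1$ the overall largest index is $n_1$; factoring out its polylogarithm, applying the induction hypothesis $\bbracket{u}^{(n_1-1)}\bbracket{w}^{(n_1-1)}=\bbracket{u\ast w}^{(n_1-1)}$, and re-peeling $z_{s_1}$ yields $\bbracket{z_{s_1}(u\ast z_{t_1}u')}^{(N)}$. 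The symmetric region $n_1<m_1$ yields $\bbracket{z_{t_1}(z_{s_1}u\ast u')}^{(N)}$.

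The crux is the diagonal $n_1=m_1=:n$. Each summand there carries the factor $\frac{\Li_{1-s_1}(q^n)\Li_{1-t_1}(q^n)}{(s_1-1)!(t_1-1)!}$, which is precisely the left-hand side of (\ref{eq:generatingLXLY}); rewriting it via that identity reproduces exactly the coefficients $\lambda^j_{s_1,t_1}$ and $\lambda^j_{t_1,s_1}$ defining $z_{s_1}\circ z_{t_1}$. Combining this with the induction hypothesis $\bbracket{u}^{(n-1)}\bbracket{u'}^{(n-1)}=\bbracket{u\ast u'}^{(n-1)}$ and re-peeling each letter of $z_{s_1}\circ z_{t_1}$ identifies the diagonal contribution with $\bbracket{(z_{s_1}\circ z_{t_1})(u\ast u')}^{(N)}$. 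Adding the three regions gives
\[
\begin{aligned}
\bbracket{v}^{(N)}\bbracket{w}^{(N)}
&= \bbracket{\,z_{s_1}(u\ast z_{t_1}u') + z_{t_1}(z_{s_1}u\ast u') + (z_{s_1}\circ z_{t_1})(u\ast u')\,}^{(N)} \\
&= \bbracket{v\ast w}^{(N)},
\end{aligned}
\]
where the last equality is nothing but the recursive definition of $\ast$. Letting $N\to\infty$ finishes the proof.

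The main obstacle is bookkeeping rather than a new idea: one must arrange the restricted brackets so that the three-way split, the induction hypothesis, and the re-peeling all line up, and in particular verify that the diagonal term reproduces $z_{s_1}\circ z_{t_1}$ \emph{with the same coefficients} $\lambda^j$. This match is automatic precisely because (\ref{eq:generatingLXLY}) was established with exactly those coefficients, so Lemma \ref{homomorphismmonobracketsl=1} already does the only genuinely analytic work. The only remaining care is in justifying the rearrangement of the double sum and the passage to the limit $N\to\infty$, both of which follow from absolute convergence for $0\le q<1$.
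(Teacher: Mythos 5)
Your proposal matches the paper's proof essentially verbatim: the paper likewise introduces the truncated bracket $\bbracket{\,\cdot\,}_N$, peels off the leading index, splits the product into the regions $n_1>m_1$, $n_1<m_1$, $n_1=m_1$, invokes the length-one identity (\ref{eq:generatingLXLY}) on the diagonal to recover $z_{s_1}\circ z_{t_1}$, closes the induction on $l+k$, and passes to the limit $N\to\infty$. No gaps; this is the same argument.
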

\begin{proof}
The proof mimics the proof of Theorem \ref{homommzvstuffle}. Define the harmonic mono-bracket
\begin{equation*}
    \bbracket{z_{s_1}\cdots z_{s_l}}_N=\bbracket{s_1,\dots,s_l}_N\coloneqq\sum_{N\geq n_1>\dots>n_l>0}\frac{\Li_{1-s_1}(q^{n_1})\cdots\Li_{1-s_l}(q^{n_l})}{(s_1-1)!\cdots(s_l-1)!}
\end{equation*}
to be the finite, truncated version of the mono-bracket. Then by definition we have
\begin{equation*}
    \bbracket{z_{s_1}\cdots z_{s_l}}_N=\sum_{N\geq n_1>0}\frac{\Li_{1-s_1}(q^{n_1})}{(s_1-1)!}\bbracket{s_2,\dots,s_l}_{n_1-1},
\end{equation*}
and so the product of two harmonic mono-brackets becomes
\begin{align*}
    \bbracket{z_{s_1}\cdots z_{s_l}}_N\bbracket{z_{t_1}\cdots z_{t_k}}_N    &=\sum_{N\geq n_1,m_1>0}\frac{\Li_{1-s_1}(q^{n_1})\Li_{1-t_1}(q^{m_1})}{(s_1-1)!(t_1-1)!}\bbracket{s_2,\dots,s_l}_{n_1-1}\bbracket{t_2,\dots,t_k}_{m_1-1}\\
    &=\sum_{N\geq n_1>0} \frac{\Li_{1-s_1}(q^{n_1})}{(s_1-1)!} \bbracket{s_2,\dots,s_l}_{n_1-1}\bbracket{t_1,\dots,t_k}_{n_1-1} \\
    &+\sum_{N\geq m_1>0}\frac{\Li_{1-t_1}(q^{m_1})}{(t_1-1)!}\bbracket{s_1,\dots,s_l}_{m_1-1}\bbracket{t_2,\dots,t_k}_{m_1-1}\\
    &+\sum_{N\geq m_1>0} \frac{\Li_{1-s_1}(q^{m_1})\Li_{1-t_1}(q^{m_1})}{(s_1-1)!(t_1-1)!}\bbracket{s_2,\dots,s_l}_{m_1-1}\bbracket{t_2,\dots,t_k}_{m_1-1}.
\end{align*}
We will apply induction to the sum of lengths $l+k$. Then, together with equation (\ref{eq:generatingLXLY}), we obtain
\begin{align*}
    \bbracket{z_{s_1}\cdots z_{s_l}}_N\bbracket{z_{t_1}\cdots z_{t_k}}_N &=\sum_{N\geq n_1>0} \frac{\Li_{1-s_1}(q^{n_1})}{(s_1-1)!} \bbracket{z_{s_2},\dots,z_{s_l}\ast z_{t_1},\dots,z_{t_k}}_{n_1-1} \\
    &+\sum_{N\geq m_1>0}\frac{\Li_{1-t_1}(q^{m_1})}{(t_1-1)!}\bbracket{z_{s_1},\dots,z_{s_l}\ast z_{t_2},\dots,z_{t_k}}_{m_1-1}\\
    &+\sum_{N\geq m_1>0} \frac{\Li_{1-s_1}(q^{m_1})\Li_{1-t_1}(q^{m_1})}{(s_1-1)!(t_1-1)!}\bbracket{z_{s_2},\dots,z_{s_l}\ast z_{t_2},\dots,z_{t_k}}_{m_1-1}\\
    &=\bbracket{z_{s_1}(z_{s_2},\dots,z_{s_l}\ast z_{t_1},\dots,z_{t_k})}_N+ \bbracket{z_{t_1}(z_{s_1},\dots,z_{s_l}\ast z_{t_2},\dots,z_{t_k})}_N\\
    &+\bbracket{(z_{s_1}\circ z_{s_2})(z_{s_2},\dots,z_{s_l}\ast z_{t_2},\dots,z_{t_k})}_N\\
    &=\bbracket{z_{s_1}\cdots z_{s_l}\ast z_{t_1}\cdots z_{t_k}}_N.
\end{align*}
Finally, the statement of the proposition follows from taking the limit
\begin{align*}
    \bbracket{z_{s_1}\cdots z_{s_l}}\bbracket{z_{t_1}\cdots z_{t_k}}&=\lim_{N\to\infty}\bbracket{z_{s_1}\cdots z_{s_l}}_N\bbracket{z_{t_1}\cdots z_{t_k}}_N\\
    &=\lim_{N\to\infty}\bbracket{z_{s_1}\cdots z_{s_l}\ast z_{t_1}\cdots z_{t_k}}_N\\
    &=\bbracket{z_{s_1}\cdots z_{s_l}\ast z_{t_1}\cdots z_{t_k}}.\qedhere
\end{align*}
\end{proof}

We call a word $z_{s_1}z_{s_2}\cdots z_{s_l}$ admissible if it satisfies $s_1>1$. Furthermore, we define the subspace $\Ha_\ast^{\MD,0}\subseteq\Ha_\ast^\MD$ to be the space of all $\Q$-linear combinations of admissible words. Let $\qMZ\subseteq \MD$ be the image of $\Ha_\ast^{\MD,0}$ under $\bbracket{\,\cdot\,}$, i.e., it is the space of all $\Q$-linear combinations of mono-brackets $\bbracket{s_1,\dots,s_l}$ with $s_1>1$.
\begin{proposition}
\label{mbsubspaces}
The space $\Ha_\ast^{\MD,0}$ is a subalgebra of $\Ha_\ast^\MD$, so that $\qMZ$ is a subalgebra of $\MD$.
\end{proposition}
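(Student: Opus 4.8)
The plan is to prove the first assertion---that $\Ha_\ast^{\MD,0}$ is closed under $\ast$---by showing directly that the stuffle product of two admissible words is a $\Q$-linear combination of admissible words. The second assertion is then immediate, since $\qMZ$ is by definition the image of $\Ha_\ast^{\MD,0}$ under the algebra homomorphism $\bbracket{\,\cdot\,}$ of Proposition \ref{homomorphismmonobrackets}, and the homomorphic image of a subalgebra is a subalgebra.

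Since a word is admissible exactly when its leading letter $z_{s_1}$ has $s_1>1$, it suffices to check that every word appearing in $v\ast w$ begins with a letter of index $>1$ whenever $v=z_sv'$ and $w=z_tw'$ with $s,t>1$. I would read this off the defining recursion
\[
z_sv'\ast z_tw'=z_s(v'\ast z_tw')+z_t(z_sv'\ast w')+(z_s\circ z_t)(v'\ast w').
\]
However deeply the first two summands are expanded, each contributes only words with leading letter $z_s$ or $z_t$, which are admissible because $s,t>1$. The leading letter of every word in the third summand comes instead from $z_s\circ z_t$. Hence the whole statement reduces to a single claim: for $s,t>1$ the letter-product $z_s\circ z_t$ contains no $z_1$-term.

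The heart of the proof---and the only genuine computation---is thus the vanishing of this $z_1$-coefficient, which by the definition of $z_s\circ z_t$ equals $\lambda_{s,t}^1+\lambda_{t,s}^1$. Using the formula for $\lambda^j_{a,b}$ from Lemma \ref{homomorphismmonobracketsl=1} together with the symmetry $\binom{s+t-2}{s-1}=\binom{s+t-2}{t-1}$, I would factor this coefficient as
\[
\binom{s+t-2}{s-1}\,\frac{B_{s+t-1}}{(s+t-1)!}\big((-1)^{s-1}+(-1)^{t-1}\big),
\]
and finish with a short parity dichotomy: if $s+t$ is even then $s+t-1\geq 3$ is odd, so the odd Bernoulli number $B_{s+t-1}$ vanishes; if $s+t$ is odd then $s$ and $t$ have opposite parities, so $(-1)^{s-1}+(-1)^{t-1}=0$. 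Either way the coefficient is zero.

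The expected obstacle is precisely this last cancellation. Nothing in the shape of $\lambda^j_{a,b}$ makes it obvious that the two $z_1$-contributions annihilate each other; the argument succeeds only because the binomial symmetry and the vanishing of odd-index Bernoulli numbers conspire across the two parity cases. Once this is in hand, the rest is a direct inspection of the recursion, and the closure of $\Ha_\ast^{\MD,0}$, hence of $\qMZ$, follows.
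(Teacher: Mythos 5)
Your proposal is correct and takes essentially the same route as the paper's proof: reading admissibility of the first two summands directly off the recursion, reducing the third summand to the vanishing of the $z_1$-coefficient $\lambda_{s,t}^1+\lambda_{t,s}^1$, and concluding via the same parity dichotomy (sign cancellation when $s,t$ differ in parity, vanishing of the odd-index Bernoulli number $B_{s+t-1}$ otherwise). Your explicit remark that $s+t-1\geq 3$ in the even case is a small but worthwhile precision the paper leaves implicit.
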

\begin{proof}
We will show that $\Ha_\ast^{\MD,0}$ is closed under multiplication. Consider the admissible words $z_{s}v$ and $z_{t}w$. Then by Proposition \ref{homomorphismmonobrackets} we have
\begin{equation*}
    z_{s}v\ast z_{t}w=z_{s}(v\ast z_{t}w)+z_{t}(z_{s}v\ast w)+(z_{s}\circ z_{t})(v\ast w).
\end{equation*}
Because $s,t>1$, we see that the first two terms are admissible. Additionally, the first letter of the last term is
\begin{align*}
    z_{s}\circ z_{t}=\sum_{j=1}^s \lambda_{s,t}^j z_j+\sum_{j=1}^t\lambda_{t,s}^jz_j +z_{s+t}.
\end{align*}
The only non-admissible words are the words corresponding to $j=1$. The coefficient of this word is 
\begin{align*}
    \lambda_{s,t}^1+\lambda_{t,s}^1&=(-1)^{t-1}{s+t-2\choose s-1}\frac{B_{s+t-1}}{(s+t-1)!}+(-1)^{s-1}{s+t-2\choose s-1}\frac{B_{s+t-1}}{(s+t-1)!}.
\end{align*}
If $s$ and $t$ differ in parity, both terms cancel. On the other hand, if $s$ and $t$ are either both odd or both even, then $s+t-1$ is odd, so that the Bernoulli number $B_{s+t-1}$ vanishes. In either case we have $\lambda_{s,t}^1+\lambda_{t,s}^1=0$, so that $(z_s\circ z_t)(v\ast w)$ is a linear combination of admissible words.
\end{proof}

% \begin{example}
% Let us consider the words $v=z_2z_1$ and $w=z_2$. Then the stuffle relation becomes
% \begin{align*}
%     v\ast w&=z_2z_2z_1+z_2(z_1\ast z_2)+(z_2\diamond z_2)z_1\\
%     &=2z_2z_2z_1+z_2z_1z_2+z_2(z_1\diamond z_2)+z_4z_1-\frac{1}{6}z_2z_1\\
%     &=2z_2z_2z_1+z_2z_1z_2+z_4z_1+z_2z_3-\frac{1}{3}z_2z_1.
% \end{align*}
% This means that, by the previous proposition, we have
% \begin{equation*}
%     \bbracket{2,1}\cdot\bbracket{2}=2\bbracket{2,2,1}+\bbracket{2,1,2}+\bbracket{4,1}+\bbracket{2,3}-\frac{1}{3}\bbracket{2,1}.
% \end{equation*}
% When we evaluate the limit
% \begin{align*}
%     \lim_{q\to1}(1-q)^5\bbracket{2,1}\cdot\bbracket{2}&=\lim_{q\to1}(1-q)^5\left(2\bbracket{2,2,1}+\bbracket{2,1,2}+\bbracket{4,1}+\bbracket{2,3}-\frac{1}{3}\bbracket{2,1}\right)\\
%     &=2\zeta(2,2,1)+\zeta(2,1,2)+\zeta(4,1)+\zeta(2,3)\\
%     &=\zeta(z_2z_1\ast z_2),
% \end{align*}
% we again see the correspondence between the stuffle products of the mono-brackets and the MZV's.
% \end{example}

\begin{proposition}
\label{mbtomzv}
For two admissible words $v,w$ we have
\begin{equation*}
    \lim_{q\to1}(1-q)^{|v|+|w|}\bbracket{v\ast w}=\zeta(v\ast w),
\end{equation*}
where ``$v\ast w$" on the left-hand side denotes the stuffle product between mono-brackets, and on the right-hand side between multiple zeta values.
\end{proposition}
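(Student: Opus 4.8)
The plan is to avoid expanding the mono-bracket stuffle product $v\ast w$ term by term, and instead exploit the fact that both the brackets and the classical multiple zeta values arise as \emph{algebra homomorphisms} out of a stuffle algebra. Concretely, I would first invoke Proposition \ref{homomorphismmonobrackets} to replace the left-hand side by a genuine product of $q$-series, writing $\bbracket{v\ast w}=\bbracket{v}\cdot\bbracket{w}$ as an exact identity of power series (valid before any limit is taken). This is the crucial move: it converts the single, weight-inhomogeneous object $\bbracket{v\ast w}$ into a product of two honest mono-brackets $\bbracket{v}$ and $\bbracket{w}$ indexed by the original admissible words.

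Next I would split the normalising factor as $(1-q)^{|v|+|w|}=(1-q)^{|v|}(1-q)^{|w|}$ and distribute it across the product, obtaining
\[
(1-q)^{|v|+|w|}\bbracket{v\ast w}=\Big((1-q)^{|v|}\bbracket{v}\Big)\Big((1-q)^{|w|}\bbracket{w}\Big).
\]
Because $v$ and $w$ are admissible, each factor on the right has a finite limit as $q\to 1$, governed by the weighted limit formula (\ref{eq:limitmonobrackets}); since both limits exist, the limit of the product is the product of the limits, which is $\zeta(v)\zeta(w)$. Finally I would apply Theorem \ref{homommzvstuffle}, the stuffle homomorphism for multiple zeta values, in the reverse direction to recognise $\zeta(v)\zeta(w)$ as $\zeta(v\ast w)$, where now $\ast$ denotes the \emph{classical} MZV stuffle. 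Chaining these equalities yields the claim.

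The point that requires the most care — and the reason the statement is not a tautology — is that the symbol $\ast$ denotes two genuinely different products on the two sides: the mono-bracket stuffle carries extra lower-weight correction terms coming from the $\circ$-product and its Bernoulli coefficients $\lambda^j_{s,t}$, whereas the MZV stuffle does not. In a direct approach one would have to expand $v\ast w$ in $\Ha_\ast^\MD$, match its top-weight part (weight $|v|+|w|$) with the MZV stuffle product, and separately show that every correction term of strictly smaller weight $d<|v|+|w|$ contributes a factor $(1-q)^{|v|+|w|-d}\cdot\big((1-q)^{d}\bbracket{\cdot}\big)\to 0$, appealing to Proposition \ref{mbsubspaces} to guarantee that these lower-weight terms remain admissible so that (\ref{eq:limitmonobrackets}) still applies to them. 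The homomorphism route outlined above sidesteps this bookkeeping entirely, and I expect the only genuine subtlety to be the justification that the limit may be taken factorwise, which is exactly the content of the weighted limit (\ref{eq:limitmonobrackets}) for admissible arguments.
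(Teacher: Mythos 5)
Your proof is correct, but it takes a genuinely different route from the one in the paper. The paper argues directly on the expansion of $v\ast w$ in $\Ha_\ast^\MD$: writing $v=z_su$, $w=z_tu'$, it notes that the recursive stuffle produces the three ``classical'' terms $z_s(u\ast z_tu')$, $z_t(z_su\ast u')$, $z_{s+t}(u\ast u')$ plus the Bernoulli correction terms from $z_s\circ z_t$, and that the corrections all have weight strictly below $|v|+|w|$ and hence are killed by the factor $(1-q)^{|v|+|w|}$ --- exactly the ``direct approach'' you sketch in your final paragraph (the paper leaves the underlying induction and the admissibility of the surviving terms implicit). You instead chain three already-established homomorphism statements: $\bbracket{v\ast w}=\bbracket{v}\bbracket{w}$ from Proposition \ref{homomorphismmonobrackets}, the factorwise limit $(1-q)^{|v|}\bbracket{v}\to\zeta(v)$ from (\ref{eq:limitmonobrackets}) for admissible words, and $\zeta(v)\zeta(w)=\zeta(v\ast w)$ from Theorem \ref{homommzvstuffle}. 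This route is shorter and sidesteps all weight bookkeeping, at the cost of treating the two homomorphism results as black boxes; the paper's expansion, by contrast, makes visible exactly which part of the mono-bracket stuffle survives the limit (the top-weight part, which is the classical stuffle) and which part vanishes. Both arguments are sound, and your observation that the only delicate point is the factorwise passage to the limit --- justified because each factor has a finite limit by admissibility --- is accurate.
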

\begin{proof}
Write $v=z_su$ and $w=z_tu'$. All terms with weight less than $|s|+|t|$ vanish under this limit. This means that we have
\begin{align*}
    \lim_{q\to1}(1-q)^{|v|+|w|}\bbracket{v\ast w}&=\lim_{q\to1}(1-q)^{|v|+|w|}\big(\bbracket{z_s(u\ast z_tu')}+\bbracket{z_t(z_su\ast u')}+\bbracket{z_{t+s}(u\ast u')}\big)\\
    &=\zeta(v\ast w).\qedhere
\end{align*}
\end{proof}

Though being equipped with a stuffle product, the space $\MD$ satisfies no apparent shuffle product. The lack of a shuffle relation between mono-brackets leads to the introduction of the so-called bi-brackets, which are introduced in Chapter \ref{chapter:bibrackets}.

\subsection{The operator $q\frac{d}{dq}$}
\label{subsection:derivations}
In this section we introduce the operator $d_q=q\frac{d}{dq}$. It turns out that this operator preserves the spaces $\MD$ and $\qMZ$, so that it is a derivation on these spaces. For example, consider the derivative 
\begin{equation*}
    d_q\negphantom{.}\bbracket{2}=d_q\sum_{n>0}\sum_{k>0}kq^{nk}=\sum_{n,k>0}nk^2q^{nk}.
\end{equation*}
It is not immediately clear that this series is again a linear combination of mono-brackets. The following result is due to Bachmann and K\"uhn \cite[Thm. 1.7, p 5]{bachmon}.
\begin{theorem}
\label{invarianceofd}
The operator $d_q=q\frac{d}{dq}$ is a self-map of $\MD$ and $\qMZ$. I.e., $d_q\negphantom{.}\bbracket{s_1,\dots,s_l}$ can be written as a linear combination of mono-brackets. Furthermore, $d_q\negphantom{.}\bbracket{s_1,\dots,s_l}$ is admissible whenever $\bbracket{s_1,\dots,s_l}$ is admissible.
\end{theorem}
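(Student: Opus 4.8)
The plan is to differentiate the defining $q$-series termwise and then repackage the result as a $\Q$-linear combination of mono-brackets. Writing $\Li_{1-s}(q^n)=\sum_{d>0}d^{s-1}q^{nd}$ for the inner divisor generating functions, a one-line computation gives $d_q\Li_{1-s}(q^n)=n\,\Li_{-s}(q^n)$, since $d_q=q\frac{d}{dq}$ brings down the exponent $nd$. As $d_q$ is a derivation, applying the Leibniz rule to $\bbracket{s_1,\dots,s_l}=\frac{1}{\prod_i(s_i-1)!}\sum_{n_1>\dots>n_l>0}\prod_i\Li_{1-s_i}(q^{n_i})$ yields
\[
  d_q\bbracket{s_1,\dots,s_l}=\frac{1}{\prod_i(s_i-1)!}\sum_{n_1>\dots>n_l>0}\sum_{i=1}^l n_i\,\Li_{-s_i}(q^{n_i})\prod_{k\neq i}\Li_{1-s_k}(q^{n_k}).
\]
The shift $\Li_{1-s_i}\mapsto\Li_{-s_i}$ by itself merely replaces $s_i$ with $s_i+1$; the genuine difficulty is the extra ``position weight'' $n_i$, which is not of mono-bracket shape (it carries a power of $n_i$ in addition to the powers of the summation variables $d_i$, so a priori it is only a bi-bracket).

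To absorb the $n_i$ I would pass to the gap coordinates $g_j=n_j-n_{j+1}>0$ (with $n_{l+1}=0$), under which $n_i=\sum_{j\ge i}g_j$ and the $q$-exponent rewrites as $\sum_i n_id_i=\sum_j g_jD_j$ with $D_j=d_1+\dots+d_j$. In these coordinates each bracket is a product of geometric factors $\frac{q^{D_j}}{1-q^{D_j}}=\sum_{g_j>0}q^{g_jD_j}$, and $d_q$ acts factorwise by bringing down $g_jD_j$. The elementary engine is the partial fraction $\frac{x}{(1-x)^2}=\frac{x}{1-x}+\bigl(\frac{x}{1-x}\bigr)^2$ applied with $x=q^{D_j}$: the factor $D_j$ raises one of the divisor powers by one, i.e. $s_i\mapsto s_i+1$ for some $i\le j$, while the squared geometric factor produces a second, coincident $D_j$-factor. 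Resolving this coincidence is the heart of the matter; I would handle it by the same generating-series / Bernoulli-number mechanism already used in Lemma~\ref{homomorphismmonobracketsl=1} and equation~(\ref{eq:generatingLXLY}), which rewrites a product of two polylogarithms at equal argument as a single polylogarithm plus lower-order corrections. After this second reduction every summand is again an honest mono-bracket, and the procedure terminates, producing a finite $\Q$-linear combination of mono-brackets of length $l$ and $l+1$.

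Organizationally I would run the argument as an induction on the length, using the recursion $\bbracket{s_1,\dots,s_l}=\sum_{n_1>0}\frac{\Li_{1-s_1}(q^{n_1})}{(s_1-1)!}\bbracket{s_2,\dots,s_l}_{n_1-1}$ together with the Leibniz rule for $d_q$; the inductive step reduces everything to controlling the \emph{outermost} weight $n_1$, which is exactly the factor the partial fraction above is designed to swallow. I expect this absorption step to be the main obstacle: the real work is the bookkeeping of which index gets raised, which new index is created by the split, and with which binomial-and-Bernoulli coefficients, so as to certify that the output lies in $\MD$ rather than only in the larger space of bi-brackets.

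Finally, for the admissibility statement I would track the leading entry. When $s_1>1$, every term in which the first block is merely shifted keeps its first entry $\ge s_1+1\ge 3>1$, hence admissible; the only way a forbidden $\bbracket{1,\dots}$ could appear is from a splitting term acting at the very front. I would show that all such dangerous contributions cancel by the same parity argument used in Proposition~\ref{mbsubspaces}: the offending coefficients reduce to $\lambda$-type constants whose essential part is an odd-index Bernoulli number, which either vanishes or cancels between the two orderings. This would establish simultaneously that $d_q$ is a self-map of $\MD$ and that it preserves admissibility, hence restricts to a self-map of $\qMZ$.
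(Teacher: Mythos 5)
Your setup is sound and in fact lines up with the paper's own starting point: the termwise computation $d_q\Li_{1-s}(q^n)=n\Li_{-s}(q^n)$, the identification of the obstruction as the extra factor $n_i$ (which is exactly Proposition~\ref{dqonbibrackets} with all $r_i=0$), and the passage to gap coordinates, which is precisely Lemma~\ref{generatingmono}: in those coordinates $d_q$ turns one factor $\frac{q^{D_k}}{1-q^{D_k}}$ into $D_k\frac{q^{D_k}}{(1-q^{D_k})^2}$. The gap is in the step you yourself flag as ``the heart of the matter''. The equal-argument Bernoulli mechanism of Lemma~\ref{homomorphismmonobracketsl=1} cannot resolve the coincident factor: applied with $s=t=1$ it says $\bigl(\frac{x}{1-x}\bigr)^2=\Li_{-1}(x)-\Li_0(x)=\frac{x}{(1-x)^2}-\frac{x}{1-x}$, which is the inverse of the partial fraction you have just applied. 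You go around in a circle, $\frac{x}{(1-x)^2}\mapsto\bigl(\frac{x}{1-x}\bigr)^2\mapsto\frac{x}{(1-x)^2}$, and with $x=q^{D_k}$ the object $\frac{q^{D_k}}{(1-q^{D_k})^2}=\sum_{g>0}g\,q^{gD_k}$ carries a power of the \emph{gap} variable, i.e.\ it is exactly the lower-weight-one bi-bracket you started from, not a mono-bracket. The Bernoulli identity resolves a coincidence of two \emph{position} variables (the diagonal of a stuffle product); here the coincidence is a repeated \emph{divisor} partial sum $D_k$, and the same identity points the wrong way.

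The missing idea, which is the actual content of the paper's proof, is to complete the coincident sum to a full product. One forms $T(X)\cdot T(X_1,\dots,X_l)=\sum_{m,d_1,\dots,d_l>0}e^{mX}\frac{q^m}{1-q^m}\prod_j e^{d_jX_j}\frac{q^{D_j}}{1-q^{D_j}}$ and splits according to how the new free variable $m$ interlaces with $D_1<\dots<D_l$: every non-coincident configuration is a shifted generating function of mono-brackets of length $l+1$ (or $l$, after the partial fraction), so the sum of the coincidence terms $m=D_k$ equals $T(X)T(X_1,\dots,X_l)$ minus manifestly-$\MD$ quantities. Applying the operator $f\mapsto\restr{\frac{\partial}{\partial X}f}{X=0}$ then produces the needed factor $D_k$ and turns $T(X)T(X_1,\dots,X_l)$ into $\bbracket{2}\cdot T(X_1,\dots,X_l)$, which lies in $\MD$ because $\MD$ is closed under multiplication (Proposition~\ref{homomorphismmonobrackets}). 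Without this ``complete the sum and subtract the off-diagonal'' step, your reduction terminates in bi-brackets of lower weight one rather than in mono-brackets, so the proof as proposed does not close; the admissibility argument via Bernoulli parity is likewise only a sketch, but that is secondary to the main gap.
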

In the proof of Theorem \ref{invarianceofd} we make use of the generating functions of the mono-brackets
\begin{equation*}
    T(X_1,\dots,X_l)=\sum_{s_1,\dots,s_l>0}[s_1,\dots,s_l]X_1^{s_1-1}\cdots X_l^{s_l-1}.
\end{equation*}
We can rewrite these generating functions as stated in the following lemma.

\begin{lemma}
\label{generatingmono}
We have
\begin{equation*}
    T(X_1,\dots,X_l)=\sum_{n_1,\dots,n_l>0}\prod_{j=1}^l\frac{e^{n_jX_j}q^{n_1+\dots+n_j}}{1-q^{n_1+\dots+n_j}}.
\end{equation*}
\end{lemma}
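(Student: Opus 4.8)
The plan is to substitute the series definition of the mono-brackets directly into the generating function $T$ and then reorganise the resulting multiple sum so that the summation over the weight indices $s_j$ collapses into exponentials, while the strict ordering constraint on the counting indices $n_1>\dots>n_l>0$ is removed by a change of variables.

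First I would write, using the definition of $[s_1,\dots,s_l]$,
\begin{equation*}
    T(X_1,\dots,X_l)=\sum_{s_1,\dots,s_l>0}\frac{1}{(s_1-1)!\cdots(s_l-1)!}\sum_{\substack{n_1>\dots>n_l>0\\d_1,\dots,d_l>0}}\prod_{j=1}^l d_j^{s_j-1}\,q^{n_jd_j}X_j^{s_j-1}.
\end{equation*}
Working at the level of formal power series in $X_1,\dots,X_l$ (the coefficient of each monomial being a well-defined $q$-series, convergent for $|q|<1$), I may freely interchange the order of summation. Performing the sum over each $s_j$ first and using $\sum_{s\geq 1}\frac{(d_jX_j)^{s-1}}{(s-1)!}=e^{d_jX_j}$, the generating function becomes
\begin{equation*}
    T(X_1,\dots,X_l)=\sum_{\substack{n_1>\dots>n_l>0\\d_1,\dots,d_l>0}}\prod_{j=1}^l e^{d_jX_j}\,q^{n_jd_j}.
\end{equation*}

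The key step is then to eliminate the ordering $n_1>\dots>n_l>0$ by the gap substitution $m_i=n_i-n_{i+1}$ (with $m_l=n_l$), so that $n_j=m_j+m_{j+1}+\dots+m_l$ and $m_1,\dots,m_l$ range independently over the positive integers. Under this substitution the exponent of $q$ rearranges as
\begin{equation*}
    \sum_{j=1}^l n_jd_j=\sum_{j=1}^l d_j\sum_{i=j}^l m_i=\sum_{i=1}^l m_i\,(d_1+\dots+d_i),
\end{equation*}
so that the $m_i$-dependence factorises. Summing each geometric series $\sum_{m_i>0}\big(q^{d_1+\dots+d_i}\big)^{m_i}=\frac{q^{d_1+\dots+d_i}}{1-q^{d_1+\dots+d_i}}$ and finally relabelling the dummy variables $d_j\mapsto n_j$ yields exactly the claimed expression.

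I expect the only real point requiring care to be the bookkeeping in this change of variables: one must check that the gap substitution is a bijection between $\{n_1>\dots>n_l>0\}$ and $\{(m_1,\dots,m_l):m_i>0\}$, and crucially that the double sum $\sum_j n_jd_j$ transforms into $\sum_i m_i(d_1+\dots+d_i)$ with the partial sums $d_1+\dots+d_i$ appearing in the correct order. The interchange of summations is harmless, since everything is a formal power series in the $X_j$ whose coefficients are honest power series in $q$, so no genuine analytic subtlety arises beyond this combinatorial rearrangement.
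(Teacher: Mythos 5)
Your proof is correct and follows essentially the same route as the paper: expand the definition, sum the $s_j$-series into exponentials $e^{d_jX_j}$, and remove the ordering $n_1>\dots>n_l>0$ via the gap substitution so that the exponent rearranges into $\sum_i m_i(d_1+\dots+d_i)$ and each $m_i$-sum becomes a geometric series. The only differences from the paper's proof are the order of the steps and the naming of the dummy variables.
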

\begin{proof}
By definition we have
\begin{align*}
    T(X_1,\dots,X_l)&=\sum_{s_1,\dots,s_l>0}\left(\sum_{\substack{u_1>\cdots>u_l>0\\n_1,\dots,n_1>0}}\frac{n_1^{s_1-1}\cdots n_l^{s_l-1}}{(s_1-1)!\cdots(s_l-1)!}q^{u_1n_1+\dots+u_ln_l}\right)X_1^{s_1-1}\cdots X_l^{s_l-1}\\
    &=\sum_{s_1,\dots,s_l\geq0}\left(\sum_{\substack{u_1>\cdots>u_l>0\\n_1,\dots,n_1>0}}\frac{n_1^{s_1}\cdots n_l^{s_l}}{s_1!\cdots s_l!}q^{u_1n_1+\dots+u_ln_l}\right)X_1^{s_1}\cdots X_l^{s_l}.
\end{align*}
The statement $u_1>u_2>\dots>u_{l-1}>u_l>0$ means that for every $1\leq k\leq l-1$ there exists an integer $v_k$ such that $u_k=u_{k+1}+v_k$. When we set $v_l=u_l$, we obtain
\begin{alignat*}{3}
    &u_l&&&&\quad=v_l,\\
    &u_{l-1}&&=u_l+v_{l-1}&&\quad=v_l+v_{l-1,}\\
    &u_{l-2}&&=u_{l-1}+v_{l-2}&&\quad=v_l+v_{l-1}+v_{l-2},\\
    &&\vdots\\
    &u_1&&=u_2+v_1&&\quad=v_l+\dots+v_2+v_1.
\end{alignat*}
When we apply this transformation, our generating function becomes
\begin{align*}
    T(X_1,\dots,X_l)&=\sum_{s_1,\dots,s_l\geq0}\left(\sum_{\substack{v_1,\cdots,v_l>0\\n_1,\dots,n_1>0}}\frac{n_1^{s_1}\cdots n_l^{s_l}}{s_1!\cdots s_l!}q^{(v_1+\dots+v_l)n_1+(v_2
    +\dots+v_l)n_2+\dots+v_ln_l}\right)X_1^{s_1}\cdots X_l^{s_l}\\
    &=\sum_{s_1,\dots,s_l\geq0}\left(\sum_{\substack{v_1,\cdots,v_l>0\\n_1,\dots,n_1>0}}\frac{n_1^{s_1}\cdots n_l^{s_l}}{s_1!\cdots s_l!}q^{v_l(n_1+\dots+n_l)+v_{l-1}(n_1+\dots+n_{l-1})+\dots+v_1n_1}\right)X_1^{s_1}\cdots X_l^{s_l}\\
    &=\sum_{n_1,\dots,n_l\geq0}\prod_{j=1}^l\sum_{s_j\geq0}\frac{n_j^{s_j}}{s_j!}X^{s_j}\sum_{v_j>0}q^{v_j(n_1+\dots+n_j)}\\
    &=\sum_{n_1,\dots,n_l\geq0}\prod_{j=1}^le^{n_jX_j}\frac{q^{n_1+\dots+n_j}}{1-q^{n_1+\dots+n_j}},
    \end{align*}
which was to be shown.
\end{proof}

We will illustrate the proof of Theorem \ref{invarianceofd} by proving the case $l=1$. Consider the product between generating functions
\begin{align*}
    T(X)\cdot T(Y)&= \sum_{n,m>0}\frac{e^{nX}q^n}{1-q^n}\;\frac{e^{mY}q^m}{1-q^m}.
\end{align*}
This sum can be split into the cases where $n>m$, $m>n$ and $n=m$. If $n>m$, then we have $n=m+k$ for some integer $k>0$. This gives us
\begin{align*}
    \sum_{n>m>0}\frac{e^{nX}q^n}{1-q^n}\;\frac{e^{mY}q^m}{1-q^m}=\sum_{m,k>0}\frac{e^{m(X+Y)}q^m}{1-q^m}\;\frac{e^{kX}q^{k+m}}{1-q^{k+m}}=T(X+Y,X).
\end{align*}
In a similar way we find that the case $m>n$ corresponds to $T(X+Y,Y)$. Finally, consider the case $m=n$. Here we will apply the identity $\left(\frac{q^n}{1-q^n}\right)^2= \frac{q^n}{(1-q^n)^2}-\frac{q^n}{1-q^n}$, which gives us
\begin{equation*}
    \sum_{n>0}e^{n(X+Y)}\left(\frac{q^n}{1-q^n}\right)^2=\sum_{n>0}\frac{e^{n(X+Y)}q^n}{(1-q^n)^2}-T(X+Y).
\end{equation*}
To introduce the operator $d_q=q\frac{d}{dq}$ we note that it acts on $T(X+Y)$ by
\begin{equation*}
    d_qT(X+Y)=\sum_{n>0}\frac{ne^{n(X+Y)}q^n}{(1-q^n)^2}=\frac{\del}{\del X}\sum_{n>0}\frac{e^{n(X+Y)}q^n}{(1-q^n)^2}.
\end{equation*}
Combining all parts gives us
\begin{align*}
    d_qT(X+Y)=\frac{\del}{\del X}T(X)T(Y)-\frac{\del}{\del X}T(X+Y,X)-\frac{\del}{\del X}T(X+Y,Y)+\frac{\del}{\del X}T(X+Y).
\end{align*}
The terms on the right-hand side are all (shifted) generating functions of mono-brackets. So when we consider the coefficient of $X^{a-1}Y^{b-1}$ with $a+b=s$ we find a representation of $d_q\negphantom{.}\bbracket{s}$ in terms of mono-brackets. Note that there are multiple choices for these integers $a,b$, which give different representations for $d_q\negphantom{.}\bbracket{s}$.\\

Now we give the proof of Theorem \ref{invarianceofd} for arbitrary length.
\begin{proof}[Proof (Theorem \ref{invarianceofd})]
Using Lemma \ref{generatingmono}, we are able to calculate the product
\begin{align*}
    T(X)T(X_1,\dots,X_l)&=\sum_{m,n_1,\dots,n_l>0}e^{mX+n_1X_1+\dots+n_lX_l}\frac{q^m}{1-q^m}\prod_{j=1}^l \frac{q^{n_1+\dots+n_j}}{1-q^{n_1+\dots+n_j}}.
\end{align*}
The right-hand side can be split according to the cases $n_1+\dots+n_k>m>n_1+\dots+n_{k-1}$ and $m=n_1+\dots+n_k$ for $1\leq k\leq l$, and where $m>n_1+\dots+n_l$.\\

First, consider the case where $m>n_1+\dots+n_l$. With $M=m-n_1-\dots-n_l$ we have
\begin{align*}
    \Sigma_1&\coloneqq \sum_{\substack{m,n_1,\dots,n_l>0\\m>n_1+\dots+n_l}}e^{mX+n_1X_1+\dots+n_lX_l}\frac{q^m}{1-q^m}\prod_{j=1}^l \frac{q^{n_1+\dots+n_j}}{1-q^{n_1+\dots+n_j}}\\
    &=\sum_{n_1,\dots,n_l,M>0}e^{(n_1+\dots+n_l+M)X+n_1X_1+\dots+n_lX_l}\frac{q^{n_1+\dots+n_l+M}}{1-q^{n_1+\dots+n_l+M}}\prod_{j=1}^l \frac{q^{n_1+\dots+n_j}}{1-q^{n_1+\dots+n_j}}\\
    &=T(X_1+X,\dots,X_l+X,X).
\end{align*}
Now consider the cases where $n_1+\dots+n_k>m>n_1+\dots+n_{k-1}$ for some $1\leq k\leq l$. Then there exist some integers $M,N$ such that $m=n_1+\dots+n_{k-1}+M$ and $n_1+\dots+n_k=m+N$. When we apply these transformations we see that
\begin{align*}
    mX+n_1X_1+\dots+n_lX_l&=(n_1+\dots+n_{k-1}+M)X+n_1X_1+\dots+n_{k-1}X_{k-1}\\
    &\quad+(M+N)X_k+n_{k+1}X_{k+1}+\dots+n_lX_l\\
    &=n_1(X_1+X)+\dots+n_{k-1}(X_{k-1}+X)+M(X_k+X)\\
    &\quad+NX_k+n_{k+1}X_{k+1}+\dots+n_lX_l,
\end{align*}
that $q^m=q^{n_1+\dots+n_{k-1}+M}$ and finally, that 
\begin{equation*}
    q^{n_1+\dots+n_{k-1}+n_k+n_{k+1}+\dots+n_i}=q^{n_1+\dots+n_{k-1}+M+N+n_{k+1}+\dots+n_i}
\end{equation*}
for $i\geq k+1$.

Combining these parts give us
\begin{align*}
    \Sigma_2&\coloneqq\sum_{k=1}^l \sum_{\substack{m,n_1,\dots,n_l>0\\m>n_1+\dots+n_{k-1}\\m<n_1+\dots+n_{k}}}e^{mX+n_1X_1+\dots+n_lX_l}\frac{q^m}{1-q^m}\prod_{j=1}^l \frac{q^{n_1+\dots+n_k}}{1-q^{n_1+\dots+n_k}}\\
    &=T(X_1+X,\dots,X_k+X,X_{k},\dots,X_l).
\end{align*}

Finally, consider the cases where $m=n_1+\dots+n_k$ for some $1\leq k\leq l$. 
Using the equation $(\frac{q^m}{1-q^m})^2=\frac{q^m}{(1-q^m)^2}-\frac{q^m}{1-q^m}$ we obtain
\begin{align*}
    \Sigma_3&\coloneqq\sum_{k=1}^l \sum_{\substack{m,n_1,\dots,n_l>0\\m=n_1+\dots+n_{k}}}e^{mX+n_1X_1+\dots+n_lX_l}\frac{q^m}{1-q^m}\prod_{j=1}^l \frac{q^{n_1+\dots+n_k}}{1-q^{n_1+\dots+n_k}}\\
    &=\sum_{k=1}^l \sum_{n_1,\dots,n_l>0}e^{n_1(X+X_1)+\dots+n_k(X+X_k)+n_{k+1}X_{k+1}+\dots+n_lX_l}\prod_{j=1}^l\left(\frac{q^{n_1+\dots+n_j}}{1-q^{n_1+\dots+n_j}}\right)^{1+\delta_{k,j}}\\
    &=\sum_{k=1}^l \sum_{n_1,\dots,n_l>0}e^{n_1(X+X_1)+\dots+n_k(X+X_k)+n_{k+1}X_{k+1}+\dots+n_lX_l}\prod_{j=1}^l\frac{q^{n_1+\dots+n_j}}{(1-q^{n_1+\dots+n_j})^{1+\delta_{k,j}}}\\
    &\quad-\sum_{k=1}^lT(X_1+X,\dots,X_{k}+X,X_{k+1},\dots,X_l)\\
    &\eqqcolon R_l+\Sigma_3.
\end{align*}
Define the operator $D(f)=\restr{\left(\frac{\del}{\del X}f\right)}{X=0}$. We claim that $D(R_l)=dT(X_1,\dots,X_l)$. Namely, we have
\begin{align*}
    d_qT(X_1,\dots,X_l)&=\sum_{n_1,\dots,n_l>0}q\frac{d}{dq}\left(\prod_{j=1}^le^{n_jX_j}\frac{q^{n_1+\dots+n_j}}{1-q^{n_1+\dots+n_j}}\right)\\
    &=\sum_{n_1,\dots,n_l>0}\sum_{k=1}^l(n_1+\dots+n_k)\prod_{j=1}^le^{n_jX_j}\frac{q^{n_1+\dots+n_j}}{(1-q^{n_1+\dots+n_j})^{\delta_{i,j}}}\\
    &=\sum_{k=1}^l\sum_{n_1,\dots,n_l>0}(n_1+\dots+n_k)e^{n_1X_1+\dots+n_lX_l}\prod_{j=1}^l\frac{q^{n_1+\dots+n_j}}{(1-q^{n_1+\dots+n_j})^{1+\delta_{k,j}}}\\
    &=D(R_l),
\end{align*}
where we used the equation
\begin{equation*}
    q\frac{d}{dq}\left(\frac{q^m}{1-q^m}\right)=\frac{mq^m}{(1-q^m)^2}.
\end{equation*}
This means that if we apply $D$ to the equation
\begin{equation*}
    R_l=T(X) T(X_1,\dots,X_l)-\Sigma_1-\Sigma_2-\Sigma_3,
\end{equation*}
the left-hand side becomes $d_qT(X_1,\dots,X_l)$. For the right-hand side we note that the terms in $D(\Sigma_1+\Sigma_2+\Sigma_3)$ can be expressed as a linear combination of (shifted) generating functions. The same holds for $T(X)T(X_1,\dots,X_l)$, which becomes $\bbracket{2}\cdot\hphantom{m}\negphantom{n} T(X_1,\dots,X_l)$. This means that, by looking at the coefficient of the term $X_1^{s_1-1}\cdots X^{s_l-1}$, $d_q\negphantom{.}\bbracket{s_1,\dots,s_l}$ can be written as a linear combination of mono-brackets.
\end{proof}
% \begin{enumerate}
%     \item Definitie
%     \item Schrijven als $E_n$ en $L_n$
%     \item Stuffle product (voor $[l=1]\cdot [l=1]$)
%     \item Partitie relatie
%     \item Shuffle product
%     \item Leg uit dat dit voor de mono-brackets een shuffle relatie geeft als $\MD=\BD$.
%     \item Voorbeelden
%     \item Odd weight
% \end{enumerate}
\section{Bi-brackets}
\label{chapter:bibrackets}
In the previous chapter we noticed that the mono-brackets form a generalisation of the multiple zeta values. These brackets satisfy a stuffle relation. In \cite{bachbi} Bachmann introduced an alternative $q$-model, called the bi-brackets. These new brackets satisfy, in a natural way, both stuffle and shuffle relations. This chapter is based on Bachmann's work on bi-brackets \cite[Chapters 3, 4, 5, pp. 4-13]{bachbi}.\\

For $r_1,\dots,r_l\geq 0$ and $s_1,\dots,s_l>0$ we define the corresponding \textit{bi-bracket} to be the $q$-series
\begin{equation*}
    \begin{bmatrix}s_1,\dots,s_l\\r_1,\dots,r_l\end{bmatrix}\coloneqq \sum_{\substack{u_1>\dots>u_l>0\\v_1,\dots,v_l>0}}\frac{u_1^{r_1}\cdots u_l^{r_l}}{r_1!\cdots r_l!}\;\frac{v_1^{s_1-1}\cdots v_l^{s_l-1}}{(s_1-1)!\cdots(s_l-1)!}q^{u_1v_1+\dots+u_lv_l}.
\end{equation*}
\noindent\textbf{Example.} The first terms of some smaller bi-brackets are given by
\begin{align*}
    \bbracket{1,1\\1,0}&=2q^3 + 5q^4 + 14q^5 + 20q^6 + 39q^7 + 52q^8 + 74q^9 + \dots,\\
    \bbracket{2,2\\1,1}&=2q^3 + 7q^4 + 26q^5 + 46q^6 + 108q^7 + 172q^8 + 274q^9 + \dots,\\
    \bbracket{2,1\\1,3}&=\frac{1}{6}\big(2q^3 + 5q^4 + 37q^5 + 50q^6 + 208q^7 + 306q^8 + 669q^9 + \dots\big).
\end{align*}

We have the following immediate relation between mono-brackets and bi-brackets
\begin{equation*}
    \begin{bmatrix}s_1,\dots,s_l\end{bmatrix}=\begin{bmatrix}s_1,\dots,s_l\\0,\dots,0\end{bmatrix},
\end{equation*}
and so every mono-bracket can be trivially seen as a bi-bracket.\\

The \textit{length} of a bi-bracket $v=\bbracket{s_1,\dots,s_l\\r_1,\dots,r_l}$ is defined as the integer $l$. The \textit{upper} and \textit{lower weight} are the sums $s_1+\dots+s_l$ and $r_1+\dots+r_l$, respectively. The \textit{total weight}, or simply the weight, is defined as the upper weight plus the lower weight, and is denoted by $|v|$.

\subsection{The partition relation for bi-brackets}
We denote the generating functions of the bi-brackets by
\begin{equation*}
    \begin{vmatrix}X_1,\dots,X_l\\Y_1,\dots,Y_l\end{vmatrix}\coloneqq\sum_{\substack{s_1,\dots,s_l>0\\r_1,\dots,r_l\geq0}}\begin{bmatrix}s_1,\dots,s_l\\r_1,\dots,r_l\end{bmatrix}X_1^{s_1-1}\cdots X_l^{s_l-1}Y_1^{r_1}\cdots Y_l^{r_l}.
\end{equation*}
\begin{proposition}
\label{bibracketalternatief}
For an integer $n\in\N$ we define the functions
\begin{equation*}
    E_n(X)\coloneqq e^{nX},\qquad\qquad L_n(X)\coloneqq \frac{e^Xq^n}{1-e^Xq^n}=\sum_{k>0}(e^Xq^n)^k.
\end{equation*}
We have the following two identities between generating functions:
\begin{alignat*}{3}
    i)\qquad&\begin{vmatrix}X_1,\dots,X_l\\Y_1,\dots,Y_l\end{vmatrix}&&=\sum_{u_1>\dots>u_l>0}\prod_{j=1}^lE_{u_j}(Y_j)L_{u_j}(X_j),\\
    ii)\qquad&\begin{vmatrix}X_1,\dots,X_l\\Y_1,\dots,Y_l\end{vmatrix}&&=\sum_{u_1>\dots>u_l>0}\prod_{j=1}^lE_{u_j}(X_{l+1-j}-X_{l+2-j})L_{u_j}(Y_1+\dots+Y_{l-j+1}).
\intertext{The second identity gives us the immediate relation}
    iii)\qquad&\begin{vmatrix}X_1,\dots,X_l\\Y_1,\dots,Y_l\end{vmatrix}&&=\begin{vmatrix}Y_1+\dots+Y_l,\dots,Y_1+Y_2,Y_1\\X_l,X_{l-1}-X_l,\dots,X_1-X_2\end{vmatrix}.
\end{alignat*}
\end{proposition}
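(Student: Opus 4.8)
The plan is to prove i) by a direct expansion of the defining series, to prove ii) by exhibiting the lattice-point bijection that underlies the name ``partition relation'' (conjugation of the associated partition), and then to obtain iii) for free by recognising the right-hand side of ii) as i) applied to transformed arguments. For i), I would insert the definition of $\bbracket{s_1,\dots,s_l\\r_1,\dots,r_l}$ into the generating function $\bgen{X_1,\dots,X_l\\Y_1,\dots,Y_l}$ and interchange the order of summation (legitimate for $\lvert q\rvert<1$, viewing everything as a formal power series in the $X_i,Y_i$ with coefficients in $\Q[[q]]$) so that the sum over $u_1>\dots>u_l>0$ and $v_1,\dots,v_l>0$ sits outside. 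For each $j$ the two inner sums factor off as
\[
\sum_{r_j\geq0}\frac{(u_jY_j)^{r_j}}{r_j!}=e^{u_jY_j}=E_{u_j}(Y_j),\qquad \sum_{s_j>0}\frac{(v_jX_j)^{s_j-1}}{(s_j-1)!}=e^{v_jX_j},
\]
and then summing $e^{v_jX_j}q^{u_jv_j}$ over $v_j>0$ gives the geometric series $L_{u_j}(X_j)$, which is exactly i).

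For ii) I would expand both sides into the same ``exponential form''. By i) the left-hand side is $\sum_{u_1>\dots>u_l>0,\,v_j>0} q^{\sum_j u_jv_j}\prod_j e^{u_jY_j+v_jX_j}$, while expanding each $L$ on the right-hand side of ii) gives $\sum_{a_1>\dots>a_l>0,\,b_j>0} q^{\sum_j a_jb_j}\prod_j e^{a_j(X_{l+1-j}-X_{l+2-j})+b_j(Y_1+\dots+Y_{l+1-j})}$ (with the convention $X_{l+1}=0$). The heart of the argument is the bijection between the two index sets given by
\[
a_j=v_1+\dots+v_{l+1-j},\qquad b_j=u_{l+1-j}-u_{l+2-j}\quad(u_{l+1}:=0),
\]
with inverse $v_m=a_{l+1-m}-a_{l+2-m}$ and $u_i=b_1+\dots+b_{l+1-i}$. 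Here the reversed partial sums of the free sequence $v$ produce the strictly decreasing sequence $a$, and the consecutive differences of the strictly decreasing sequence $u$ produce the free sequence $b$, so the constraints ($u,a$ strictly decreasing positive; $v,b$ arbitrary positive) correspond exactly. One then checks that each term matches: the coefficient of $X_i$ on the right is $a_{l+1-i}-a_{l+2-i}=v_i$, the coefficient of $Y_i$ is $b_1+\dots+b_{l+1-i}=u_i$, and the $q$-exponents agree.

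The $q$-exponent identity $\sum_j u_jv_j=\sum_j a_jb_j$ is the real arithmetic core — it is the statement that conjugating the underlying partition preserves its size — and I would verify it by Abel summation: writing $V_p=v_1+\dots+v_p$ and using $u_{l+1}=0$, a reindexing $p=l+1-j$ turns the right-hand sum into
\[
\sum_{j=1}^l a_jb_j=\sum_{p=1}^l V_p\,(u_p-u_{p+1})=\sum_{p=1}^l v_p u_p .
\]
I expect this Abel-summation/indexing step to be the main obstacle: pinning down the reversed partial sums and the boundary conventions $u_{l+1}=0$, $X_{l+1}=0$ correctly, and confirming the differences and partial sums land in the admissible ranges, is the delicate part, whereas matching the $X$- and $Y$-coefficients is routine once the bijection is fixed.

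Finally, iii) is immediate. Applying identity i) to the generating function on the right-hand side of iii), whose top arguments are $Y_1+\dots+Y_{l+1-j}$ and whose bottom arguments are $X_{l+1-j}-X_{l+2-j}$, rewrites it as $\sum_{u_1>\dots>u_l>0}\prod_j E_{u_j}(X_{l+1-j}-X_{l+2-j})\,L_{u_j}(Y_1+\dots+Y_{l+1-j})$, which is precisely the right-hand side of ii). Thus iii) is just ii) combined with i), and no further work is needed.
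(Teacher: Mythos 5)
Your proposal is correct and follows essentially the same route as the paper: a direct term-by-term expansion for i), the partial-sum/difference change of variables (partition conjugation) for ii), and iii) read off immediately from ii) via i). Your bijection $a_j=v_1+\dots+v_{l+1-j}$, $b_j=u_{l+1-j}-u_{l+2-j}$ is exactly the paper's substitution $u_j=u_1'+\dots+u_{l-j+1}'$, $v_j=v_{l-j+1}'-v_{l-j+2}'$ written in the inverse direction, with only the cosmetic difference that you verify $\sum u_jv_j=\sum a_jb_j$ by Abel summation where the paper expands it directly.
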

\begin{proof}
We have
\begin{align*}
    \begin{vmatrix}X_1,\dots,X_l\\Y_1,\dots,Y_l\end{vmatrix}&=\sum_{\substack{u_1>\dots>u_l>0\\v_1,\dots,v_l>0}}\sum_{\substack{s_1,\dots,s_l>0\\r_1,\dots,r_l\geq 0}}\prod_{j=1}^l\frac{u_j^{r_j}}{r_j!}\frac{v_j^{s_j-1}}{(s_j-1)!}q^{u_jv_j}X_j^{s_j-1}Y_j^{r_j}\\
    &=\sum_{\substack{u_1>\dots>u_l>0\\v_1,\dots,v_l>0}}\prod_{j=1}^l e^{u_jY_j}e^{v_jX_j}q^{u_jv_j}\\
    &=\sum_{\substack{u_1>\dots>u_l>0}}\left(\sum_{v_1,\dots,v_l>0}\prod_{j=1}^l e^{u_jY_j}\left(e^{X_j}q^{u_j}\right)^{v_j}\right)\\
    &=\sum_{u_1>\dots>u_l>0}\prod_{j=1}^l e^{u_jY_j}\frac{e^{X_j}q^{u_j}}{1-e^{X_j}q^{u_j}}.
\end{align*}
% \sum_{\substack{s_1,\dots,s_l>0\\r_1,\dots,r_l\geq 0}}\sum_{\substack{u_1>\dots>u_l>0\\v_1,\dots,v_l>0}}\frac{u_1^{r_1}\cdots u_l^{r_l}}{r_1!\cdots r_l!}\frac{v_1^{s_1-1}\cdots v_l^{s_l-1}}{(s_1-1)!\cdots(s_l-1)!}q^{u_1v_1+\dots+u_lv_l}X_1^{s_1-1}\cdots X_l^{s_l-1}Y_1^{r_1}\cdots Y_l^{r_l}\\
This proves the first identity. For the second identity, we apply the transformations $u_j=u_1'+\dots+u_{l-j+1}'$ and $v_j=v_{l-j+1}'-v_{l-j+2}'$ to
\begin{align*}
    \begin{vmatrix}X_1,\dots,X_l\\Y_1,\dots,Y_l\end{vmatrix}&=\sum_{\substack{u_1>\dots>u_l>0\\v_1,\dots,v_l>0}}\prod_{j=1}^l e^{u_jY_j}e^{v_jX_j}q^{u_jv_j}.
\end{align*}
Under these transformations we have $v'_j=v_1+\dots+v_{l-j+1}$, so that
\begin{align*}
    q^{u_1v_1+\dots+u_lv_l}&=q^{(u_1'+\dots+u_l')v_1+(u_1'+\dots+u_{l-1}')v_1+\dots+(u_1'+u_2')v_{l-1}+u_1'v_l}\\
    &=q^{u_1'(v_1+\dots+v_l)+u_2'(v_1+\dots+v_{l-1})+\dots+u_{l-1}'(v_1+v_2)+u_l'v_l}\\
    &=q^{u_1'v_1'+u_2'v_2'+\dots+u_l'v_l'}.
\end{align*}
Additionally, the conditions $u_1>\dots>u_l>0$ and $v_1,\dots,v_l>0$ become $u_1',\dots,u_l'>0$ and $v_1'>\dots>v_l'>0$. Combining the parts gives us
\begin{align*}
    \begin{vmatrix}X_1,\dots,X_l\\Y_1,\dots,Y_l\end{vmatrix}&=\sum_{\substack{v_1'>\dots>v_l'>0\\u_1',\dots,u_l'>0}}\prod_{j=1}^le^{(v_{l-j+1}'-v_{l-j+2}')X_j}e^{(u_1'+\dots+u_{l-j+1}')Y_j}q^{u_j'v_j'}\\
    &=\sum_{\substack{v_1'>\dots>v_l'>0\\u_1',\dots,u_l'>0}}\prod_{j=1}^le^{v_j'(X_{l-j+1}-X_{l-j+2})}e^{u_j'(Y_1+\dots+Y_{l-j+1})}q^{u_j'v_j'}\\
    &=\sum_{\substack{v_1'>\dots>v_l'>0\\u_1',\dots,u_l'>0}}\prod_{j=1}^lE_{v'_j}(X_{l-j+1}-X_{l-j+2})L_{u'_j}(Y_1+\dots+Y_{l-j+1})\\
    &=\begin{vmatrix}Y_1+\dots+Y_l,\dots,Y_1+Y_2,Y_1\\X_l,X_{l-1}-X_l,\dots,X_1-X_2\end{vmatrix}.\qedhere
\end{align*}
\end{proof}

\begin{corollary}
\label{generatingpartitionexample}
For length $l=1$ and $l=2$, respectively, we have
\begin{equation*}
    \begin{vmatrix}X\\Y\end{vmatrix}=\begin{vmatrix}Y\\X\end{vmatrix},\qquad\qquad \begin{vmatrix}X_1,X_2\\Y_1,Y_2\end{vmatrix}=\begin{vmatrix}Y_1+Y_2,Y_1\\X_2,X_1-X_2\end{vmatrix}.
\end{equation*}
\end{corollary}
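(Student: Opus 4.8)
The plan is to derive Corollary \ref{generatingpartitionexample} as a direct specialisation of the identity \textit{iii)} in Proposition \ref{bibracketalternatief}, which has just been proven. The corollary is nothing more than the cases $l=1$ and $l=2$ of that general relation, so the bulk of the work is simply to substitute these values and read off the resulting indices. Since \textit{iii)} is already available to us, no new analytic or combinatorial content is required; the task is purely one of careful bookkeeping with the index substitutions.

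First I would treat the case $l=1$. Here identity \textit{iii)} reads
\begin{equation*}
    \begin{vmatrix}X_1\\Y_1\end{vmatrix}=\begin{vmatrix}Y_1\\X_1\end{vmatrix},
\end{equation*}
since for $l=1$ the right-hand side of \textit{iii)} has upper entry $Y_1$ (the only partial sum $Y_1+\dots+Y_l$) and lower entry $X_l=X_1$ (the term $X_{l-1}-X_l$ does not appear). Relabelling $X_1\mapsto X$ and $Y_1\mapsto Y$ gives exactly the stated symmetry $\bgen{X\\Y}=\bgen{Y\\X}$.

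Next I would carry out the case $l=2$. Now the upper row of the right-hand side of \textit{iii)} is $(Y_1+Y_2,\,Y_1)$ and the lower row is $(X_2,\,X_1-X_2)$, which yields immediately
\begin{equation*}
    \begin{vmatrix}X_1,X_2\\Y_1,Y_2\end{vmatrix}=\begin{vmatrix}Y_1+Y_2,Y_1\\X_2,X_1-X_2\end{vmatrix},
\end{equation*}
precisely the second claimed identity. The main (and only) obstacle here is avoiding an off-by-one or reversal error in the index pattern: the upper entries run through the partial sums $Y_1+\dots+Y_{l-j+1}$ in decreasing length as $j$ increases, while the lower entries are the consecutive differences $X_{l+1-j}-X_{l+2-j}$ (with the convention $X_{l+1}=0$ so that the first lower entry is $X_l$). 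Once one confirms that these patterns specialise correctly at $l=1,2$, the corollary follows without further computation.
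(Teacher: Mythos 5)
Your proposal is correct and coincides with the paper's (implicit) argument: the corollary is exactly the specialisation of identity \textit{iii)} of Proposition \ref{bibracketalternatief} to $l=1$ and $l=2$, and your index bookkeeping (including the convention $X_{l+1}=0$ for the first lower entry) checks out.
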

This gives us the following relations between bi-brackets.
\begin{corollary}
\label{length1partition}
For $s,s_1,s_2>0$ and $r,r_1,r_2\geq 0$ we have 
\begin{align*}
    \begin{bmatrix}s\\r\end{bmatrix}&=\begin{bmatrix}r+1\\s-1\end{bmatrix}\\
\intertext{and}
    \begin{bmatrix}s_1,s_2\\r_1,r_2\end{bmatrix}&=\sum_{\substack{0\leq j\leq r_1 \\ 0\leq k \leq s_2-1}}(-1)^k{s_1-1+k\choose k}{r_2+j\choose j}\bbracket{r_2+1+j,r_1+1-j\\s_2-1-k,s_1-1+k}.
\end{align*}
\end{corollary}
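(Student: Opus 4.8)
The plan is to read both identities straight off the generating-function equalities recorded in Corollary~\ref{generatingpartitionexample}, which are themselves the length~$1$ and length~$2$ instances of part~$iii)$ of Proposition~\ref{bibracketalternatief}. Since, by definition, a bi-bracket is exactly the coefficient of the associated monomial in $\bgen{X_1,\dots,X_l\\Y_1,\dots,Y_l}$, both parts reduce to a comparison of coefficients, and no new analytic input is needed.

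For the length-$1$ claim I would expand each side of $\bgen{X\\Y}=\bgen{Y\\X}$ using $\bgen{X\\Y}=\sum_{s>0,\,r\ge0}\bbracket{s\\r}X^{s-1}Y^{r}$. On the right the roles of $X$ and $Y$ are swapped, so the power of $X$ now records a lower index and the power of $Y$ an upper index; hence the coefficient of $X^{s-1}Y^{r}$ on the right is $\bbracket{r+1\\s-1}$. Equating the coefficient of $X^{s-1}Y^{r}$ on the two sides gives $\bbracket{s\\r}=\bbracket{r+1\\s-1}$ at once.

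For the length-$2$ claim I would begin from $\bgen{X_1,X_2\\Y_1,Y_2}=\bgen{Y_1+Y_2,Y_1\\X_2,X_1-X_2}$ and write the right-hand side as $\sum\bbracket{a_1,a_2\\b_1,b_2}(Y_1+Y_2)^{a_1-1}Y_1^{a_2-1}X_2^{b_1}(X_1-X_2)^{b_2}$, the sum being over $a_1,a_2>0$ and $b_1,b_2\ge0$. The decisive step is to open up the two mixed powers with the binomial theorem, $(X_1-X_2)^{b_2}=\sum_{k}(-1)^{k}\binom{b_2}{k}X_1^{b_2-k}X_2^{k}$ and $(Y_1+Y_2)^{a_1-1}=\sum_{m}\binom{a_1-1}{m}Y_1^{m}Y_2^{a_1-1-m}$, and then to read off the coefficient of $X_1^{s_1-1}X_2^{s_2-1}Y_1^{r_1}Y_2^{r_2}$. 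Matching the $X$-powers forces $b_2=s_1-1+k$ and $b_1=s_2-1-k$, contributing the factor $(-1)^{k}\binom{s_1-1+k}{k}$; matching the $Y$-powers (after setting $j=m=a_1-1-r_2$) forces $a_1=r_2+1+j$ and $a_2=r_1+1-j$, contributing $\binom{r_2+j}{j}$. Substituting these index values into $\bbracket{a_1,a_2\\b_1,b_2}$ reproduces the summand $\bbracket{r_2+1+j,r_1+1-j\\s_2-1-k,s_1-1+k}$.

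The only genuine obstacle is pinning down the ranges of the two summation variables, which must emerge from the defining constraints on the indices rather than be imposed externally. The bound $0\le k\le s_2-1$ comes from $b_1=s_2-1-k\ge0$, and the bound $0\le j\le r_1$ comes from $a_2=r_1+1-j\ge1$; the lower bounds $j,k\ge0$ are those of the binomial expansions. As a final consistency check I would confirm that $a_1+a_2=r_1+r_2+2$ and $b_1+b_2=s_1+s_2-2$ for every term, so that total weight is preserved throughout, as a partition-type identity demands. With the ranges settled, the coefficient comparison gives precisely the stated formula.
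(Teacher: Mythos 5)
Your proposal is correct and is essentially the paper's own proof: both read the two identities off the generating-function equalities of Corollary~\ref{generatingpartitionexample}, expand $(Y_1+Y_2)^{a_1-1}$ and $(X_1-X_2)^{b_2}$ by the binomial theorem, and compare the coefficient of $X_1^{s_1-1}X_2^{s_2-1}Y_1^{r_1}Y_2^{r_2}$. Your index bookkeeping (forcing $b_2=s_1-1+k$, $b_1=s_2-1-k$, $a_1=r_2+1+j$, $a_2=r_1+1-j$) and the resulting summation ranges match the stated formula exactly.
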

\begin{proof}
For length $l=1$, the statement follows by considering the coefficient of $X^{s-1}Y^r$ in the equality $\begin{vmatrix}X\\Y\end{vmatrix}=\begin{vmatrix}Y\\X\end{vmatrix}$. For length $l=2$, the second identity in Corollary \ref{generatingpartitionexample} states that
\begin{equation*}
    \sum_{\substack{s_1,s_2>0\\r_1,r_2\geq 0}}\begin{bmatrix}s_1,s_2\\r_1,r_2\end{bmatrix}X_1^{s_1-1}X_2^{s_2-1}Y_1^{r_1}Y_2^{r_2}=\sum_{\substack{s_1,s_2>0\\r_1,r_2\geq 0}}\begin{bmatrix}s_1,s_2\\r_1,r_2\end{bmatrix}(Y_1+Y_2)^{s_1-1}Y_1^{s_2-1}X_2^{r_1}(X_1-X_2)^{r_2}.
\end{equation*}
When we expand the left-hand side we get
\begin{equation*}
    (Y_1+Y_2)^{s_1-1}Y_1^{s_2-1}X_2^{r_1}(X_1-X_2)^{r_2}=\sum_{k=0}^{r_2}\sum_{j=0}^{s_1-1}(-1)^{k}{s_1-1\choose j}{r_2\choose k}Y_1^{s_2-1+j}Y_2^{s_1-1-j}X_1^{r_2-k}X_2^{r_1+k}.
\end{equation*}
The statement then follows by considering the coefficient of $X_1^{s_1-1}X_2^{s_2-1}Y_1^{r_1}Y_2^{r_2}$. 
\end{proof}
The relation between bi-brackets arising from Proposition \ref{bibracketalternatief}.$iii$ is called the \textit{partition relation} of bi-brackets. For lengths $l=1,2$ this relation is given in the corollary above.\\

\noindent\textbf{Example.} We have the following partition relations between bi-brackets:
\begin{align*}
    \bbracket{3,1\\1,0}&=\bbracket{1,2\\0,2}+\bbracket{2,1\\0,2},\\
    \bbracket{2,2\\1,1}&=\bbracket{2,2\\1,1}-2\bbracket{2,2\\0,2}+2\bbracket{3,1\\1,1}-4\bbracket{3,1\\0,2}.
\end{align*}

\noindent\textbf{Example.} Suppose that $s_1=s_2=1$. When we apply the partition relation to $\bbracket{1,1\\r_1,r_2}$, all bi-brackets in the resulting linear combination are of the form $\bbracket{r_2+1+j,r_1+1-j\\0\phantom{++.},\phantom{++.}0}=\bbracket{r_2+1+j,r_1+1-j}$. This means that all bi-brackets of the form $\bbracket{1,1\\r_1,r_2}$ are a linear combination of mono-brackets.\\

The operator $q\frac{d}{dq}$ from Section \ref{subsection:derivations} also acts on the bi-brackets.
\begin{proposition}
\label{dqonbibrackets}
We have
\begin{equation*}
    d_q\bbracket{s_1,\dots,s_l\\r_1,\dots,r_l}=\sum_{k=1}^ls_k(r_k+1)\bbracket{s_1,\dots,s_{k-1},s_k+1,s_{k+1},\dots,s_l\\r_1,\dots,r_{k-1},r_k+1,r_{k+1},\dots,r_l}.
\end{equation*}
\end{proposition}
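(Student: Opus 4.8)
The plan is to differentiate the defining $q$-series term by term. The only dependence on $q$ in the summand of $\bbracket{s_1,\dots,s_l\\r_1,\dots,r_l}$ sits in the monomial $q^{u_1v_1+\dots+u_lv_l}$, and the operator $d_q=q\frac{d}{dq}$ acts on a pure power by $d_q q^M=Mq^M$. Differentiating inside the sum I therefore expect
\[
d_q\bbracket{s_1,\dots,s_l\\r_1,\dots,r_l}=\sum_{\substack{u_1>\dots>u_l>0\\v_1,\dots,v_l>0}}\Big(\sum_{k=1}^l u_kv_k\Big)\frac{u_1^{r_1}\cdots u_l^{r_l}}{r_1!\cdots r_l!}\,\frac{v_1^{s_1-1}\cdots v_l^{s_l-1}}{(s_1-1)!\cdots(s_l-1)!}\,q^{u_1v_1+\dots+u_lv_l}.
\]

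Next I would interchange the finite sum over $k$ with the summation over the indices, writing the right-hand side as $\sum_{k=1}^l\Sigma_k$, where $\Sigma_k$ is the same series with a single extra factor $u_kv_k$ inserted into the summand. The decisive step is then pure factorial bookkeeping for each fixed $k$: the inserted $u_k$ promotes $u_k^{r_k}$ to $u_k^{r_k+1}$ while the denominator stays $r_k!$, and the inserted $v_k$ promotes $v_k^{s_k-1}$ to $v_k^{(s_k+1)-1}$ while the denominator stays $(s_k-1)!$. Comparing these prefactors with the $\frac{1}{(r_k+1)!\,s_k!}$ appearing in the bi-bracket whose $k$-th upper and lower entries are each raised by one, the mismatch is exactly $\frac{(r_k+1)!}{r_k!}\cdot\frac{s_k!}{(s_k-1)!}=s_k(r_k+1)$. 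Hence
\[
\Sigma_k=s_k(r_k+1)\bbracket{s_1,\dots,s_{k-1},s_k+1,s_{k+1},\dots,s_l\\r_1,\dots,r_{k-1},r_k+1,r_{k+1},\dots,r_l},
\]
and summing over $k$ yields the claimed identity.

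The only non-formal point is the justification of term-by-term differentiation, which I would dispatch by observing that for $0\le q<1$ the series defining the bi-bracket converges absolutely and uniformly on compact subsets of the open unit disk (each summand decays geometrically in $q^{u_jv_j}$), so differentiation under the summation sign is legitimate. Beyond that, everything reduces to the elementary relation $d_q q^M=Mq^M$ together with the factorial matching above, so I do not anticipate any real obstacle; the only thing requiring care is keeping the two simultaneous index shifts $s_k\mapsto s_k+1$ and $r_k\mapsto r_k+1$ correctly aligned.
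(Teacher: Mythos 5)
Your proof is correct and is exactly the argument the paper has in mind: its proof of Proposition \ref{dqonbibrackets} consists of the single remark that the identity ``follows directly from the definition and the fact that $d_q q^k=kq^k$,'' which is precisely the term-by-term differentiation and factorial bookkeeping you carry out in detail. No discrepancy to report.
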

\begin{proof}
This follows directly from the definition and the fact that $d_q q^k=kq^k$.
\end{proof}

\subsection{The stuffle and shuffle product}
The bi-brackets satisfy a stuffle relation which is similar to that of the mono-brackets. We will first consider the stuffle product for two length $1$ bi-brackets.
\begin{lemma}
\label{productgeneratingbibracket}
The product of two generating functions of length $l=1$ is given by
\begin{align*}
    \begin{vmatrix}X_1\\Y_1\end{vmatrix}\cdot\begin{vmatrix}X_2\\Y_2\end{vmatrix}&=\begin{vmatrix}X_1,X_2\\Y_1,Y_2\end{vmatrix}+\begin{vmatrix}X_2,X_1\\Y_2,Y_1\end{vmatrix}+\sum_{k=1}^\infty \frac{B_k}{k!}(X_1-X_2)^{k-1}\begin{vmatrix}X_1+X_2\\Y_1\end{vmatrix}+\sum_{k=1}^\infty \frac{B_k}{k!}(X_2-X_1)^{k-1}\begin{vmatrix}X_1+X_2\\Y_2\end{vmatrix}\\
    &\quad+\frac{1}{X_1-X_2}\left(\begin{vmatrix}X_1\\Y_1+Y_2\end{vmatrix}-\begin{vmatrix}X_2\\Y_1+Y_2\end{vmatrix}\right).
\end{align*}
\end{lemma}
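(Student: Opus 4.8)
The plan is to follow the blueprint of the length-$1$ stuffle computation for mono-brackets in Lemma~\ref{homomorphismmonobracketsl=1}, but now at the level of the generating functions of Proposition~\ref{bibracketalternatief}.$i$. First I would expand both factors as
\[
\begin{vmatrix}X_i\\Y_i\end{vmatrix}=\sum_{u>0}E_u(Y_i)L_u(X_i),
\]
so that the product becomes the double sum $\sum_{u,v>0}E_u(Y_1)E_v(Y_2)L_u(X_1)L_v(X_2)$. The organising step is to split this sum into the three ranges $u>v$, $v>u$ and $u=v$. By Proposition~\ref{bibracketalternatief}.$i$ read in length $2$, the range $u>v$ reassembles into $\begin{vmatrix}X_1,X_2\\Y_1,Y_2\end{vmatrix}$ and the range $v>u$ into $\begin{vmatrix}X_2,X_1\\Y_2,Y_1\end{vmatrix}$, which produce the first two terms of the right-hand side.

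All of the remaining content lies in the diagonal $u=v$, i.e.\ in $\sum_{u>0}E_u(Y_1+Y_2)\,L_u(X_1)L_u(X_2)$. Here I would use the partial-fraction identity
\[
L_u(X_1)L_u(X_2)=\frac{1}{e^{X_1-X_2}-1}\,L_u(X_1)+\frac{1}{e^{X_2-X_1}-1}\,L_u(X_2),
\]
which is exactly the identity used (with $z=q^u$) in the proof of Lemma~\ref{homomorphismmonobracketsl=1} and is verified by clearing denominators. Since the two prefactors are independent of $u$, they come out of the summation, and each remaining sum $\sum_{u>0}E_u(Y_1+Y_2)L_u(X_i)$ is again a length-$1$ generating function.

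The last step reproduces the Bernoulli series exactly as in (\ref{eq:emachtbernoulli}): I would expand $\frac{1}{e^{t}-1}=\sum_{k\geq 0}\frac{B_k}{k!}t^{k-1}$ and separate the polar term $k=0$, which contributes $\frac{1}{t}$, from the regular part $\sum_{k\geq 1}$. The regular part yields the two Bernoulli series, while the two $\frac1t$ pieces combine into the single symmetric-difference term $\frac{1}{X_1-X_2}\big(\cdots\big)$. Commutativity of the whole expression under $(X_1,Y_1)\leftrightarrow(X_2,Y_2)$ is a useful consistency check throughout, and specialising all $Y_i\mapsto 0$ should recover the mono-bracket product of Lemma~\ref{homomorphismmonobracketsl=1}.

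The step I expect to be the main obstacle is the precise bookkeeping in this final stage. The computation just described delivers the diagonal in the ``form-$i$'' shape, with the length-$1$ generating functions $\begin{vmatrix}X_1\\Y_1+Y_2\end{vmatrix}$ and $\begin{vmatrix}X_2\\Y_1+Y_2\end{vmatrix}$ — which is already the shape in which the symmetric-difference term is written — and with the Bernoulli expansion carried by the variable $X_1-X_2$. Matching this against the stated right-hand side requires keeping careful track of which variable carries the Bernoulli expansion and to which generating function each piece resums; trading a combined lower slot $Y_1+Y_2$ for a combined upper slot via the partition symmetry of Proposition~\ref{bibracketalternatief}.$iii$ (Corollary~\ref{length1partition}) is where the delicacy lies. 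Once the pole is isolated correctly and the partition relation is applied consistently, the three groups of terms on the right-hand side fall out.
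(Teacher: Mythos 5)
Your argument is precisely the paper's proof: expand both factors via Proposition \ref{bibracketalternatief}.$i$, split the double sum into the ranges $u_1>u_2$, $u_2>u_1$, $u_1=u_2$, and treat the diagonal by applying identity (\ref{eq:emachtbernoulli}) to $L_{u}(X_1)L_{u}(X_2)$ with the pole $k=0$ separated from the Bernoulli tail. The final ``delicate matching'' you flag is not a real obstacle and requires no partition relation (which would only give $\begin{vmatrix}X_1\\Y_1+Y_2\end{vmatrix}=\begin{vmatrix}Y_1+Y_2\\X_1\end{vmatrix}$, not $\begin{vmatrix}X_1+X_2\\Y_1\end{vmatrix}$): the diagonal resums directly, via Proposition \ref{bibracketalternatief}.$i$ in length $1$, to $\sum_{k\geq1}\frac{B_k}{k!}(X_1-X_2)^{k-1}\begin{vmatrix}X_1\\Y_1+Y_2\end{vmatrix}+\sum_{k\geq1}\frac{B_k}{k!}(X_2-X_1)^{k-1}\begin{vmatrix}X_2\\Y_1+Y_2\end{vmatrix}$ plus the difference term, and this is the correct form, since extracting the coefficient of $X_1^{s_1-1}X_2^{s_2-1}Y_1^{r_1}Y_2^{r_2}$ from it yields exactly the terms ${r_1+r_2\choose r_1}\lambda^j_{s_1,s_2}\bbracket{j\\r_1+r_2}$ of Proposition \ref{stufflel=1bibracket}, whereas the upper slots $\begin{vmatrix}X_1+X_2\\Y_i\end{vmatrix}$ printed in the Bernoulli terms of the statement (and in the penultimate display of the paper's own proof) appear to be a typo for $\begin{vmatrix}X_i\\Y_1+Y_2\end{vmatrix}$.
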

\begin{proof}
According to Proposition \ref{bibracketalternatief} we have
\begin{align*}
    \begin{vmatrix}X_1\\Y_1\end{vmatrix}\cdot\begin{vmatrix}X_2\\Y_2\end{vmatrix}&=\sum_{u_1,u_2>0}E_{u_1}(Y_1)E_{u_2}(Y_2)L_{u_1}(X_1)L_{u_2}(X_2)\\
    &=\sum_{u_1>u_2>0}\cdots\hphantom{i}+\sum_{u_2>u_1>0}\cdots\hphantom{i}+\sum_{u_1=u_2>0}\cdots\\
    &=\begin{vmatrix}X_1,X_2\\Y_1,Y_2\end{vmatrix}+\begin{vmatrix}X_2,X_1\\Y_2,Y_1\end{vmatrix}+\sum_{k>0}E_{k}(Y_1)E_{k}(Y_2)L_{k}(X_1)L_{k}(X_2).
\end{align*}
We directly see that $E_{k}(Y_1)E_{k}(Y_2)=E_k(Y_1+Y_2)$. To evaluate $L_{k}(X_1)L_{k}(X_2)$, note that this expression is equivalent to identity (\ref{eq:emachtbernoulli}) in the proof of Lemma \ref{homomorphismmonobracketsl=1}. This gives us
\begin{align*}
    \sum_{k>0}E_{k}(Y_1)E_{k}(Y_2)L_{k}(X_1)L_{k}(X_2)&=\sum_{k>0}E_k(Y_1+Y_2)\left(\sum_{n>0}\frac{B_n}{n!}(X_1-X_2)^{n-1}L_k(X_1)\right.
    \\
    &\quad+\left.\sum_{n>0}\frac{B_n}{n!}(X_2-X_1)^{n-1}L_k(X_2)+\frac{L_k(X_1)-L_k(X_2)}{X_1-X_2}\right)\\
    &=\sum_{k=1}^\infty \frac{B_k}{k!}(X_1-X_2)^{k-1}\begin{vmatrix}X_1+X_2\\Y_1\end{vmatrix}\\
    &\quad+\sum_{k=1}^\infty \frac{B_k}{k!}(X_2-X_1)^{k-1}\begin{vmatrix}X_1+X_2\\Y_2\end{vmatrix}\\
    &\quad+\frac{1}{X_1-X_2}\left(\begin{vmatrix}X_1\\Y_1+Y_2\end{vmatrix}-\begin{vmatrix}X_2\\Y_1+Y_2\end{vmatrix}\right),
\end{align*}
which completes the proof.
\end{proof}
Lemma \ref{productgeneratingbibracket} implies the following relation between bi-brackets. 
\begin{proposition}
\label{stufflel=1bibracket}
For $s_1,s_2>0$ and $r_1,r_2\geq0$ we have
    \begin{align*}
    \bbracket{s_1\\r_1}\cdot\bbracket{s_2\\r_2}&=\bbracket{s_1,s_2\\r_1,r_2}+\bbracket{s_2,s_1\\r_2,r_1}+{r_1+r_2\choose r_1}\bbracket{s_1+s_2\\r_1+r_2}\\
    &\quad+{r_1+r_2\choose r_1}\sum_{j=1}^{s_1}\lambda_{s_1,s_2}^j\bbracket{j\\r_1+r_2}+{r_1+r_2\choose r_1}\sum_{j=1}^{s_2}\lambda_{s_2,s_1}^j\bbracket{j\\r_1+r_2}
    \end{align*}
where the numbers $\lambda_{a,b}^j$ from Section \ref{subsection:algebraofmonobrackets} are defined as
\begin{equation*}
    \lambda_{a,b}^j=(-1)^{b-1}{a+b-j-1 \choose a-j}\frac{B_{a+b-j}}{(a+b-j)!}.
\end{equation*}
\end{proposition}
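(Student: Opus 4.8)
The plan is to obtain the Proposition from Lemma \ref{productgeneratingbibracket} by comparing, on both sides of that generating-function identity, the coefficient of the single monomial $X_1^{s_1-1}X_2^{s_2-1}Y_1^{r_1}Y_2^{r_2}$. By the very definition of the generating series $\begin{vmatrix}X\\Y\end{vmatrix}$, the product $\begin{vmatrix}X_1\\Y_1\end{vmatrix}\cdot\begin{vmatrix}X_2\\Y_2\end{vmatrix}$ contributes precisely $\bbracket{s_1\\r_1}\cdot\bbracket{s_2\\r_2}$ to this coefficient, which is the left-hand side of what we want. Thus the whole proof reduces to reading off the same coefficient from each of the five terms on the right of the Lemma.

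The two off-diagonal terms are immediate: from $\begin{vmatrix}X_1,X_2\\Y_1,Y_2\end{vmatrix}$ one reads $\bbracket{s_1,s_2\\r_1,r_2}$, and from the swapped series $\begin{vmatrix}X_2,X_1\\Y_2,Y_1\end{vmatrix}$ one matches $X_1^{s_1-1}$ against the second upper slot and $Y_1^{r_1}$ against the second lower slot, obtaining $\bbracket{s_2,s_1\\r_2,r_1}$. These give the first two terms of the statement. For the three diagonal terms the key observation, already visible in the identity $E_k(Y_1)E_k(Y_2)=E_k(Y_1+Y_2)$ used in the proof of Lemma \ref{productgeneratingbibracket}, is that their two lower variables enter only through the combination $Y_1+Y_2$. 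Since the coefficient of $Y_1^{r_1}Y_2^{r_2}$ in $(Y_1+Y_2)^{r}$ equals ${r_1+r_2\choose r_1}$ when $r=r_1+r_2$ and vanishes otherwise, each diagonal term produces a common factor ${r_1+r_2\choose r_1}$ and leaves behind length-one brackets carrying the single lower index $r_1+r_2$; this is the source of the binomial coefficient in the Proposition.

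It then remains to extract the coefficient of $X_1^{s_1-1}X_2^{s_2-1}$ from the two Bernoulli-weighted sums $\sum_{k\geq1}\frac{B_k}{k!}(X_1-X_2)^{k-1}(\,\cdots)$ (and its $X_1\leftrightarrow X_2$ mirror) and from the difference quotient $\frac{1}{X_1-X_2}(\,\cdots)$. Here I would point out that, with $L(X)$ replaced by the length-one series $\sum_{s}\bbracket{s\\r_1+r_2}X^{s-1}$, these three expressions are term by term identical to the three summands of equation (\ref{eq:emachtbernoulli}) in the proof of Lemma \ref{homomorphismmonobracketsl=1}. Consequently the extraction is exactly the computation carried out there to establish (\ref{eq:generatingLXLY}): the first Bernoulli sum yields $\sum_{j=1}^{s_1}\lambda^j_{s_1,s_2}\bbracket{j\\r_1+r_2}$ (after the change of variables $j\mapsto s-1-j$ used there), its mirror yields $\sum_{j=1}^{s_2}\lambda^j_{s_2,s_1}\bbracket{j\\r_1+r_2}$ analogously, and the difference quotient yields $\bbracket{s_1+s_2\\r_1+r_2}$ via $\frac{X_1^{s-1}-X_2^{s-1}}{X_1-X_2}=\sum_{j=0}^{s-2}X_1^jX_2^{s-2-j}$. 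Multiplying each of these by the shared factor ${r_1+r_2\choose r_1}$ produces exactly the last three terms of the Proposition.

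I do not expect a genuine obstacle here: the only substantial computation, the evaluation of the $\lambda^j_{a,b}$, is inherited verbatim from Lemma \ref{homomorphismmonobracketsl=1} and can simply be cited. The one place that needs care is the bookkeeping of the two variable pairs $(X_1,Y_1)$ and $(X_2,Y_2)$ simultaneously, and in particular recognising that the lower variables collapse to $Y_1+Y_2$ only in the diagonal part of the double sum, so that the factor ${r_1+r_2\choose r_1}$ attaches to precisely the three diagonal terms and not to the two off-diagonal ones.
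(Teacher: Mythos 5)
Your proposal is correct and is exactly the paper's (very terse) proof written out in full: extract the coefficient of $X_1^{s_1-1}X_2^{s_2-1}Y_1^{r_1}Y_2^{r_2}$ from Lemma \ref{productgeneratingbibracket}, with the $X$-extraction for the diagonal terms inherited verbatim from the computation establishing (\ref{eq:generatingLXLY}) in Lemma \ref{homomorphismmonobracketsl=1}. One small but worthwhile point: your reading of the diagonal part --- that all three diagonal terms carry their lower variables only through $Y_1+Y_2$, so each acquires the factor $\binom{r_1+r_2}{r_1}$ --- is the correct one and matches the \emph{proof} of Lemma \ref{productgeneratingbibracket} (where the factor $E_k(Y_1+Y_2)$ multiplies all three summands), whereas the displayed statement of that lemma writes the two Bernoulli sums with $\bgen{X_1+X_2\\Y_1}$ and $\bgen{X_1+X_2\\Y_2}$ instead of $\bgen{X_1\\Y_1+Y_2}$ and $\bgen{X_2\\Y_1+Y_2}$; taken literally that version would not yield the Proposition, so you have in effect silently corrected a typo in the lemma.
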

\begin{proof}
We once again look at the coefficient of $X_1^{s_1-1}X_2^{s_2-1}Y_1^{r_1}Y_2^{r_2}$ on both sides of the identity from Lemma \ref{productgeneratingbibracket}. The proof is analogous to that of Lemma \ref{homomorphismmonobracketsl=1}.
\end{proof}
We will refer to the product in Proposition \ref{stufflel=1bibracket} as the \textit{stuffle product} (between two length $1$ bi-brackets). We can combine this stuffle product with the partition relation to construct another way of calculating the product between bi-brackets. We will refer to this product as the shuffle product between bi-brackets (of length $1$).\\

For any bi-bracket $\bbracket{s_1,\dots,s_l\\r_1,\dots,r_l}$, let $P\!\bbracket{s_1,\dots,s_l\\r_1,\dots,r_l}$ denote linear combination of brackets corresponding to the partition relation. For lengths $l=1,2$, the linear combinations $P\!\bbracket{s\\r}$ and $P\!\bbracket{s_1,s_2\\r_1,r_2}$ are given in Corollary \ref{length1partition}. We define the \textit{shuffle product} to be the identity
\begin{align*}
    \bbracket{s_1\\r_1}\cdot\bbracket{s_2\\r_2}&=P\left(P\!\bbracket{s_1\\r_1}\cdot P\!\bbracket{s_2\\r_2}\right)\\
    &=\sum_{\substack{0\leq j\leq s_1-1\\0\leq k\leq r_2}}(-1)^k{r_1+k\choose k}{s_2-1+j\choose j}\bbracket{s_2+j,s_1-j\\r_2-k,r_1+k}\\
    &\quad+\sum_{\substack{0\leq j\leq s_2-1\\0\leq k\leq r_1}}(-1)^k{r_2+k\choose k}{s_1-1+j\choose j}\bbracket{s_1+j,s_2-j\\r_1-k,r_2+k}\\
    &\quad+{s_1+s_2-2\choose s_1-1}\bbracket{s_1+s_2-1\\r_1+r_2+1}+{s_1+s_2-2\choose s_1-1}\sum_{j=0}^{r_1}\lambda_{r_1,r_2+1}^j\bbracket{s_1+s_2-1\\j}\\
    &\quad+{s_1+s_2-2\choose s_1-1}\sum_{j=0}^{r_2}\lambda_{r_2,r_1+1}^j\bbracket{s_1+s_2-1\\j}.
\end{align*}
In the next section we will generalise the stuffle and shuffle product to higher length.
\subsection{The algebra of bi-brackets}
Consider the alphabet $A_\BD\coloneqq\{z_{s,r}\mid s>0, r\geq 0\}$. For any two letters $z_{s_1,r_1}$ and $z_{s_2,r_2}$, define the product
\begin{equation*}
    z_{s_1,r_1}\diamond z_{s_2,r_2}\coloneqq {r_1+r_2\choose r_1}z_{s_1+s_2,r_1+r_2}+{r_1+r_2\choose r_1}\sum_{j=1}^{s_1}\lambda_{s_1,s_2}^jz_{j,r_1+r_2}+{r_1+r_2\choose r_1}\sum_{j=1}^{s_2}\lambda_{s_2,s_1}^jz_{j,r_1+r_2}.
\end{equation*}
On the space $\Q\langle A_\BD\rangle$ we define the stuffle product as
\begin{equation*}
    z_{s_1,r_1}v\ast z_{s_2,r_2}w= z_{s_1,r_1}(v\ast z_{s_2,r_2}w) +z_{s_2,r_2}(z_{s_1,r_1}v\ast w)+(z_{s_1,r_1}\diamond z_{s_2,r_2})(v\ast w).
\end{equation*}
This defines the quasi-shuffle algebra $\Ha^\BD_\ast$.
\begin{proposition}
The map $\begin{bmatrix}\ \cdot\ \end{bmatrix}\colon z_{s_1,r_1}\cdots z_{s_l,r_l}\mapsto \begin{bmatrix}s_1,\dots,s_l\\r_1,\dots,r_l\end{bmatrix}$ is a homomorphism of $\Q$-algebras. This means that for any $v,w\in \Ha_\ast^\BD$ we have
\begin{equation*}
    \begin{bmatrix}v\ast w\end{bmatrix}=\begin{bmatrix}v\end{bmatrix}\cdot \begin{bmatrix}w\end{bmatrix}.
\end{equation*}
\end{proposition}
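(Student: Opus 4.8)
The plan is to transcribe the proof of Proposition~\ref{homomorphismmonobrackets} almost verbatim, the one new feature being that the innermost summand now carries an extra factor $u^{r}/r!$ and that the diagonal term is governed by the richer letter-product $\diamond$ in place of $\circ$. First I would introduce the truncated bi-brackets
\[
\bbracket{s_1,\dots,s_l\\r_1,\dots,r_l}_N\coloneqq\sum_{\substack{N\geq u_1>\dots>u_l>0\\v_1,\dots,v_l>0}}\frac{u_1^{r_1}\cdots u_l^{r_l}}{r_1!\cdots r_l!}\,\frac{v_1^{s_1-1}\cdots v_l^{s_l-1}}{(s_1-1)!\cdots(s_l-1)!}\,q^{u_1v_1+\dots+u_lv_l},
\]
so that $\bbracket{\,\cdot\,}=\lim_{N\to\infty}\bbracket{\,\cdot\,}_N$ coefficientwise in $q$. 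Performing the innermost sum over $v_1$ by means of $\sum_{v>0}\frac{v^{s-1}}{(s-1)!}q^{uv}=\frac{\Li_{1-s}(q^{u})}{(s-1)!}$ yields the peel-off recursion
\[
\bbracket{s_1,\dots,s_l\\r_1,\dots,r_l}_N=\sum_{N\geq u>0}\frac{u^{r_1}}{r_1!}\,\frac{\Li_{1-s_1}(q^{u})}{(s_1-1)!}\,\bbracket{s_2,\dots,s_l\\r_2,\dots,r_l}_{u-1},
\]
which expresses a length-$l$ truncated bi-bracket through length-$(l-1)$ ones at a smaller cutoff.

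Since $\ast$ and the multiplication of bi-brackets are bilinear, it suffices to treat words. I would take $v=z_{s_1,r_1}u$ and $w=z_{s_2,r_2}u'$, form the product $\bbracket{v}_N\bbracket{w}_N$ using the recursion, and split the resulting double outer sum over the two largest heights $(u_1,u_1')$ into the three regions $u_1>u_1'$, $u_1<u_1'$ and $u_1=u_1'$. In the first region $u_1$ is the global maximum, so only the factor $\frac{u_1^{r_1}}{r_1!}\frac{\Li_{1-s_1}(q^{u_1})}{(s_1-1)!}$ is peeled off and the two remaining truncated brackets, of lengths $l(v)-1$ and $l(w)$, are both cut off at $u_1-1$; the induction hypothesis on the total length turns their product into $\bbracket{u\ast z_{s_2,r_2}u'}_{u_1-1}$, and re-folding gives the first stuffle summand $\bbracket{z_{s_1,r_1}(u\ast z_{s_2,r_2}u')}_N$. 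The region $u_1<u_1'$ is symmetric and produces $\bbracket{z_{s_2,r_2}(z_{s_1,r_1}u\ast u')}_N$.

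The heart of the argument is the diagonal $u_1=u_1'=:u$, where the two single-letter weights multiply as
\[
\frac{u^{r_1}}{r_1!}\frac{\Li_{1-s_1}(q^{u})}{(s_1-1)!}\cdot\frac{u^{r_2}}{r_2!}\frac{\Li_{1-s_2}(q^{u})}{(s_2-1)!}.
\]
Here I would separate the two factors. The powers of $u$ combine as $\frac{u^{r_1}}{r_1!}\frac{u^{r_2}}{r_2!}=\binom{r_1+r_2}{r_1}\frac{u^{r_1+r_2}}{(r_1+r_2)!}$, which supplies the binomial coefficient and the common lower index $r_1+r_2$; the product of polylogarithms $\frac{\Li_{1-s_1}(q^u)\Li_{1-s_2}(q^u)}{(s_1-1)!(s_2-1)!}$ is exactly identity~(\ref{eq:generatingLXLY}) from Lemma~\ref{homomorphismmonobracketsl=1}, expanding it into $\frac{\Li_{1-s_1-s_2}(q^u)}{(s_1+s_2-1)!}$ plus the $\lambda$-weighted lower terms. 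Reading these off with the shared lower index $r_1+r_2$ reproduces precisely the letters occurring in $z_{s_1,r_1}\diamond z_{s_2,r_2}$, so after one more use of the induction hypothesis the diagonal contributes $\bbracket{(z_{s_1,r_1}\diamond z_{s_2,r_2})(u\ast u')}_N$. Summing the three regions gives the truncated stuffle identity, and letting $N\to\infty$ yields the proposition; the base case, when one of $v,w$ is empty, is trivial.

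The only real obstacle is the bookkeeping on the diagonal, and even that is not essentially new: once the $u$-powers are split off to give the binomial factor, the polylogarithm product is handled word-for-word by equation~(\ref{eq:generatingLXLY}). All remaining steps are the formal transcription of the mono-bracket proof with the lower index $r$ carried passively along.
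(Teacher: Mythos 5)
Your proposal is correct and is precisely the argument the paper has in mind: the paper's own proof of this proposition simply states that it is analogous to Proposition \ref{homomorphismmonobrackets} and omits the details, and what you have written is exactly that analogue carried out, with the truncated brackets, the three-region split on the outermost summation variable, and the diagonal term handled by combining the binomial identity for the $u$-powers with equation (\ref{eq:generatingLXLY}) to recover the letter-product $\diamond$. No gaps; the bookkeeping on the diagonal matches the definition of $z_{s_1,r_1}\diamond z_{s_2,r_2}$ term for term.
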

\begin{proof}
The proof is analogue to the proof of Proposition \ref{homomorphismmonobrackets} and will be omitted.
\end{proof}

Since the bi-brackets are also equipped with a partition relation (which gives rise to the shuffle product), it makes sense to also define this relation on $\Q\langle A_\BD\rangle$. Let $P\colon \Q\langle A_\BD\rangle\to \Q\langle A_\BD\rangle$ be the map which sends a word to the linear combination of words corresponding to the partition relation. For example, in the cases $l=1,2$, the map $P$ is given by
\begin{align*}
    P(z_{s_1,r_1})&=z_{r_1+1,s_1-1},\\
    P(z_{s_1,r_1}z_{s_2,r_2})&=\sum_{\substack{0\leq l\leq r_1\\0\leq k\leq s_2-1}}(-1)^k{s_1-1+k\choose k}{r_2+l\choose l} z_{r_2+1+l,s_2-1-k}z_{r_1+1-l,s_1-1+k}.
\end{align*}
Then by construction, the map $\begin{bmatrix}\ \cdot\ \end{bmatrix}\colon z_{s_1,r_1}\cdots z_{s_l,r_l}\mapsto \begin{bmatrix}s_1,\dots,s_l\\r_1,\dots,r_l\end{bmatrix}$ is invariant under $P$. This means that the product $\shuffle$, defined on two words $v,w$ by
\begin{equation*}
    v\shuffle w= P\big(P(v)\ast P(w)\big)
\end{equation*}
and extended linearly to $\Q\langle A_\BD\rangle$, will also satisfy $\begin{bmatrix}v\shuffle w\end{bmatrix}=\begin{bmatrix}v\end{bmatrix}\cdot\begin{bmatrix}w\end{bmatrix}$. For example, the stuffle and shuffle product between length $1$ bi-brackets are given by
\begin{align*}
    z_{s_1,r_1}\ast z_{s_2,r_2}&=z_{s_1,r_1}z_{s_2,r_2}+z_{s_2,r_2}z_{s_1,r_1}+{r_1+r_2\choose r_1}z_{s_1+s_2,r_1+r_1}\\
    &\quad+{r_1+r_2\choose r_1}\sum_{j=1}^{s_1}\lambda_{s_1,s_2}^jz_{j,r_1+r_2}+{r_1+r_2\choose r_1}\sum_{j=1}^{s_1}\lambda_{s_2,s_1}^jz_{j,r_1+r_2}\\
\intertext{and}
    z_{s_1,r_1}\shuffle z_{s_2,r_2}&=\sum_{\substack{1\leq l\leq s_1\\ 0\leq k\leq r_2}}{s_1+s_2-1-l \choose s_2-1}{r_1+r_2-k \choose r_1}(-1)^{r_2-k}z_{s_1+s_2-l,k}z_{l,r_1+r_2-k}\\
    &\quad+\sum_{\substack{1\leq l\leq s_2\\ 0\leq k\leq r_1}}{s_1+s_2-1-l \choose s_1-1}{r_1+r_2-k \choose r_2}(-1)^{r_1-k}z_{s_1+s_2-l,k}z_{l,r_1+r_2-k}\\
    &\quad+{s_1+s_2-2\choose s_1-1}z_{s_1+s_2-1,r_1+r_2+1}\\
    &\quad+{s_1+s_2-2\choose s_1-1}\sum_{j=0}^{r_1}\lambda_{r_1,r_2+1}^jz_{s_1+s_2-1,j}+{s_1+s_2-2\choose s_1-1}\sum_{j=0}^{r_2}\lambda_{r_2,r_1+1}^jz_{s_1+s_2-1,j}.
\end{align*}
\subsection{Connections with multiple zeta values}
We can rewrite the bi-brackets, similar to the mono-brackets, using the polynomials $P_s(x)$. We obtain the alternate form
\begin{equation*}
    \bbracket{s_1,\dots,s_l\\r_1,\dots,r_l}=\frac{1}{r_1!(s_1-1)!\dots r_l!(s_l-1)!}\sum_{n_1>\dots n_l>0}\frac{n_1^{r_1}P_{s_1}(q^{n_1})\cdots n_l^{r_l}P_{s_l}(q^{n_l})}{(1-q^{n_1})^{s_1}\cdots (1-q^{n_l})^{s_l}}.
\end{equation*}
This means that we have
\begin{equation*}
    \lim_{q\to 1^-}(1-q)^{s_1+\dots+s_l}\bbracket{s_1,\dots,s_l\\r_1,\dots,r_l}=\frac{1}{r_1!\cdots r_l!}\zeta(s_1-r_1,\dots,s_l-r_l),
\end{equation*}
whenever $s_1-r_1>1$ and $s_j-r_j\geq 1$. And so the bi-brackets are too a generalisation of the multiple zeta values.

Consider the restricted alphabet $A'\subset A_\BD$ consisting of the letters $z_{s,0}$. A word $z_{s}v\in \Q\langle A'\rangle$ is called admissible if $s>1$.
\begin{proposition}
\label{prop:monoshufismzv}
Let $v,w\in\Q\langle A'\rangle$ be admissible. Then we have the identity
\begin{equation*}
    \lim_{q\to1^-}(1-q)^{{|v| +|w|}}\bbracket{v\ast w}=\zeta(v\ast w),
\end{equation*}
where $v\ast w$ on the left-hand side denotes the stuffle product between bi-brackets, and between multiple zeta values on the right-hand side.
\end{proposition}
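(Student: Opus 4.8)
The plan is to reduce the statement to Proposition \ref{mbtomzv} by recognising $\Q\langle A'\rangle$ as a copy of the mono-bracket stuffle algebra $\Ha^{\MD}_\ast$. The restricted alphabet $A'=\{z_{s,0}\mid s>0\}$ is in bijection with the mono-bracket alphabet $A=\{z_s\mid s>0\}$ via $z_{s,0}\leftrightarrow z_s$, and under the immediate relation $\bbracket{s_1,\dots,s_l}=\bbracket{s_1,\dots,s_l\\0,\dots,0}$ recorded earlier this bijection identifies each bi-bracket indexed by a word of $\Q\langle A'\rangle$ with the corresponding mono-bracket. First I would verify that this bijection is in fact an isomorphism of stuffle algebras; once the stuffle product, the admissibility condition, and the weight grading are seen to transport correctly, the desired limit becomes an instance of Proposition \ref{mbtomzv}.

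The key step is checking that the two stuffle products coincide. Specialising the defining product of letters to $r_1=r_2=0$ collapses the binomial coefficient to ${0\choose 0}=1$ and gives
\begin{equation*}
    z_{s_1,0}\diamond z_{s_2,0}=z_{s_1+s_2,0}+\sum_{j=1}^{s_1}\lambda_{s_1,s_2}^jz_{j,0}+\sum_{j=1}^{s_2}\lambda_{s_2,s_1}^jz_{j,0},
\end{equation*}
which is precisely the product $z_{s_1}\circ z_{s_2}$ of Section \ref{subsection:algebraofmonobrackets}. Since $\diamond$ maps a pair of $A'$-letters to a $\Q$-linear combination of $A'$-letters, an induction on total length shows that $\Q\langle A'\rangle$ is closed under the bi-bracket stuffle product and that the recursion for $z_{s_1,0}v\ast z_{s_2,0}w$ agrees term by term with the mono-bracket recursion. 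Hence $(\Q\langle A'\rangle,\ast)\cong\Ha^{\MD}_\ast$ as $\Q$-algebras.

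It then remains to match the analytic data on the two sides. A word $z_sv\in\Q\langle A'\rangle$ is admissible exactly when $s>1$, which is the mono-bracket admissibility condition, and the total weight of $\bbracket{s_1,\dots,s_l\\0,\dots,0}$ is $s_1+\dots+s_l$, equal to the mono-bracket weight; thus the renormalising exponent $|v|+|w|$ is the same whether read for bi-brackets or for mono-brackets. With these identifications, both sides of the claimed limit are the images of one and the same element of $\Ha^{\MD}_\ast$ under $\lim_{q\to1^-}(1-q)^{|v|+|w|}\bbracket{\ \cdot\ }$ and under $\zeta(\ \cdot\ )$, so the equality is exactly Proposition \ref{mbtomzv}.

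I do not expect a genuine obstacle here, because passing to lower index $0$ suppresses the only feature separating bi-brackets from mono-brackets. The single point requiring care, and it is inherited directly from the proof of Proposition \ref{mbtomzv}, is confirming that the correction terms $\lambda^j_{s,t}z_{j,0}$ produced by $\diamond$ carry total weight $j<s+t$ strictly below the top weight and therefore vanish after multiplication by $(1-q)^{|v|+|w|}$ in the limit $q\to1^-$, leaving only the top-weight terms that reproduce the stuffle recursion for multiple zeta values. If one preferred to bypass the isomorphism altogether, this same vanishing would let one simply rerun the short inductive computation of Proposition \ref{mbtomzv} verbatim on $\Q\langle A'\rangle$.
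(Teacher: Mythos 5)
Your proposal is correct and follows essentially the same route as the paper: the paper's proof likewise identifies the stuffle product on $\Q\langle A'\rangle$ with the mono-bracket stuffle product on $\Ha_\ast^\MD$ via $z_{s,0}\leftrightarrow z_s$ and then invokes Proposition \ref{mbtomzv}. You simply spell out in more detail the verification (specialising $\diamond$ at $r_1=r_2=0$ to recover $\circ$, matching admissibility and weight) that the paper leaves as an observation.
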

\begin{proof}
We observe that the stuffle product $v\ast w$ for words $v,w\in\Q\langle A'\rangle$ between bi-brackets is the same as the stuffle product for the mono-brackets $\overline{v}\ast \overline{w}$, with
\begin{equation*}
    \overline{v}=\overline{\vphantom{t}z_{s_1,0}\cdots z_{s_1,0}}=z_{s_1}\cdots z_{s_2}\in\Ha_\ast^\MD.
\end{equation*}
The statement of the proposition then follows directly from Proposition \ref{mbtomzv}.
\end{proof}
% \begin{proof}
% If $v$ or $w$ is trivial, then there is nothing to prove, so may assume that $v=z_su$ and $w=z_tu'$. All terms in
% \begin{align*}
%     \bbracket{z_su\ast z_tu'}&=\bbracket{z_s(u \ast z_tu')}+\bbracket{z_t(z_su \ast u')}+\bbracket{z_{s+t}(u \ast u')}\\
%     &\quad+\sum_{j=1}^{s}\lambda_{s,t}^j\bbracket{z_j(u \ast u')}+\sum_{j=1}^{t}\lambda_{t,s}^j\bbracket{z_j(u \ast u')}
% \end{align*}
% with upper weight less then $|v| +|w|$ vanish under the limit. This means that we have
% \begin{align*}
%     \lim_{q\to1^-}(1-q)^{|v| +|w|}\bbracket{v\ast w}&=\lim_{q\to1^-}(1-q)^{|v| +|w|}\left(\bbracket{z_s(u \ast z_tu')}+\bbracket{z_t(z_su \ast u')}+\bbracket{z_{s+t}(u \ast u')}\right)\\
%     &=\zeta\big(z_s(u \ast z_tu')+z_t(z_su \ast u')+z_{s+t}(u \ast u')\big)\\
%     &=\zeta(v\ast w).
% \end{align*}
% This completes the proof.
% \end{proof}
A similar statement is true regarding the shuffle product on the algebra $\Q\langle A'\rangle$. The proof can be found in \cite[Thm. 3, p. 9]{qbrackets}.
\begin{proposition}
\label{prop:bibshufismzv}
For two admissible words $v,w\in \Q\langle A'\rangle$ we have
\begin{equation*}
    \lim_{q\to 1^-}(1-q)^{|v|+|w|}\bbracket{v \shuffle w}=\zeta (v\shuffle w).
\end{equation*}
\end{proposition}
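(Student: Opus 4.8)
The plan is to avoid expanding the bi-bracket shuffle product explicitly and instead exploit the two homomorphism properties already at our disposal, reducing everything to the single identity $\zeta(v)\zeta(w)=\zeta(v\shuffle w)$. The starting observation is that, unlike the stuffle case treated in Proposition \ref{prop:monoshufismzv}, the shuffle product of two words of $\Q\langle A'\rangle$ does \emph{not} remain inside $\Q\langle A'\rangle$: since $v\shuffle w=P(P(v)\ast P(w))$, the intermediate partition map $P$ produces genuine bi-brackets with nonzero lower indices. For instance, the length-one formula already yields a term ${s_1+s_2-2\choose s_1-1}z_{s_1+s_2-1,1}$. Hence one cannot simply identify $v\shuffle w$ with the mono-bracket shuffle and invoke Proposition \ref{mbtomzv} as before.

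First I would use that $\bbracket{\,\cdot\,}$ is a homomorphism for the bi-bracket shuffle, i.e. $\bbracket{v\shuffle w}=\bbracket{v}\cdot\bbracket{w}$; as in the excerpt this follows from the $P$-invariance of $\bbracket{\,\cdot\,}$ together with the stuffle homomorphism. Because $v$ and $w$ lie in $\Q\langle A'\rangle$ and are admissible, $\bbracket{v}$ and $\bbracket{w}$ are admissible mono-brackets of weights $|v|$ and $|w|$. I would then factor
\begin{equation*}
    (1-q)^{|v|+|w|}\bbracket{v\shuffle w}=\big((1-q)^{|v|}\bbracket{v}\big)\cdot\big((1-q)^{|w|}\bbracket{w}\big),
\end{equation*}
and let $q\to 1^-$. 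By the limit (\ref{eq:limitmonobrackets}) each factor converges to the corresponding multiple zeta value, so the product of the two finite limits equals $\zeta(v)\zeta(w)$.

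To finish, I would compare with the right-hand side: identifying each letter $z_{s}\in A'$ with $y_s=x_0^{s-1}x_1$ turns $v,w$ into admissible words of $\Ha^0$, and Theorem \ref{homommzvshuffle} gives $\zeta(v\shuffle w)=\zeta(v)\zeta(w)$ for the \emph{multiple zeta value} shuffle. Both sides of the claimed identity therefore equal $\zeta(v)\zeta(w)$, which completes the argument.

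The conceptual point is exactly that the bi-bracket shuffle $\shuffle$ and the multiple-zeta-value shuffle are genuinely \emph{different} products on these words, yet produce the same value in the limit; the homomorphism route makes this transparent without matching any coefficients. The only real subtlety is the temptation of a term-by-term proof: there one would have to check that every bi-bracket in $v\shuffle w$ of upper weight strictly below $|v|+|w|$ (such as the $z_{\ast,1}$ and the $\lambda$-contributions above) is killed by the surplus factors of $(1-q)$, while the top-upper-weight terms reproduce precisely the multiple-zeta-value shuffle coefficients. That bookkeeping, together with the need to control non-admissible intermediate brackets, is where the hard part would lie, and it is exactly what the homomorphism argument lets us bypass.
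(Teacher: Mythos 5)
Your argument is correct, and it is worth noting that the thesis itself does not prove this proposition at all: it defers entirely to \cite[Thm.~3, p.~9]{qbrackets}. Your route is a legitimate shortcut that stays inside the thesis: since $\bbracket{v\shuffle w}=\bbracket{P(P(v)\ast P(w))}=\bbracket{v}\cdot\bbracket{w}$ by the $P$-invariance of $\bbracket{\,\cdot\,}$ together with the bi-bracket stuffle homomorphism, the left-hand side factors as $\big((1-q)^{|v|}\bbracket{v}\big)\big((1-q)^{|w|}\bbracket{w}\big)$, each factor having a finite limit $\zeta(v)$, $\zeta(w)$ by (\ref{eq:limitmonobrackets}) because $v,w$ are admissible words over $A'$ (so $|v|$ and $|w|$ are pure upper weights); the right-hand side equals $\zeta(v)\zeta(w)$ by Theorem \ref{homommzvshuffle} under the identification $z_{s,0}\mapsto y_s=x_0^{s-1}x_1$. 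What the cited reference (and the term-by-term alternative you describe) buys is finer information: it identifies exactly which bi-brackets in the expansion of $v\shuffle w$ survive the limit and shows that their coefficients individually reproduce the classical shuffle coefficients, which is the structural fact actually exploited when the shuffle product is used to generate relations in $\MD$ in the Principal result section. For the literal statement of the proposition your aggregate argument suffices; the only points worth flagging explicitly are that both one-variable limits exist and are finite (this is precisely where admissibility of $v$ and $w$ enters) and that $\zeta(v\shuffle w)$ on the right denotes the $\Ha^0_\shuffle$ product, the reading forced by consistency with Proposition \ref{prop:monoshufismzv}.
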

This means that both the stuffle and shuffle products between bi-brackets are generalisations of the stuffle and shuffle between multiple zeta values.
% We define an alternative $q$-series, the \textit{multiple }$q$\textit{-zeta brackets}, which were introduced by Zudilin \cite{qbrackets}. They are defined by
% \begin{align*}
%     \zbracket{s_1,\dots,s_l\\r_1,\dots,r_l}&\coloneqq\\
%     &\negphantom{blabla}\sum_{\substack{m_1,\dots,m_l>0\\d_1,\dots,d_l>0}}\frac{m_1^{r_1-1}\cdots m_l^{r_l-1}}{(r_1-1)!\cdots (r_l-1)!}\frac{d_1^{s_1-1}\cdots d_l^{s_l-1}}{(s_1-1)!\cdots(s_l-1)!}q^{(m_l+\dots+m_1)d_1+(m_l+\dots+m_2)d_2+\dots+m_ld_l}
% \end{align*}
% for $s_1,\dots,s_l,r_1,\dots,r_l>0$. If we set $m'_i=d_{l+1-i}$ and $d'_j=m_{l+1-j}$, then the power $q^{(m_l+\dots+m_1)d_1+\dots+m_ld_l}$ becomes $q^{(m'_l+\dots+m'_1)d'_1+\dots+m'_ld'_l}$. When we apply this transformation to the $q$-zeta bracket $\zbracket{s_1,\dots,s_l\\r_1,\dots,r_l}$ we obtain the identity
% \begin{equation*}
%     \zbracket{s_1,\dots,s_l\\r_1,\dots,r_l}=\zbracket{r_l,\dots,r_1\\s_l,\dots,s_1}.
% \end{equation*}

\section{Principal result}
% \textcolor{blue}{Introduction on the fact that mono-brackets have no shuffle product. Problem lies with the bi-brackets. Perhaps it is true that every bi-bracket is in fact a $Q$-linear combination of mono-brackets. Then such shuffle product exists. In this light, Bachmann [\ref{}] conjectured the following.}
In Section \ref{subsection:algebraofmonobrackets} we showed that the mono-brackets possess a stuffle relation, which generalises the stuffle relation between multiple zeta values. Proposition \ref{prop:bibshufismzv} then shows that the mono-brackets also potentially satisfy a shuffle relation, when we consider the mono-brackets as a special type of bi-brackets. However, the brackets appearing in the product $v\shuffle w$ are not (by construction) all mono-brackets. For example, the shuffle product of two length $1$ brackets is given by
\begin{align*}
    \bbracket{z_{s,0}\shuffle z_{t,0}}=\bbracket{s}\cdot\bbracket{t}&=\sum_{\substack{1\leq l\leq s}}{s+t-1-l \choose t-1}\bbracket{s+t-l,l}    +\sum_{\substack{1\leq l\leq t}}{s+t-1-l \choose s-1}\bbracket{s+t-l,l}\\
    &\quad+{s+t-2\choose s-1}\bbracket{s+t-1\\1}-{s+t-2\choose s-1}\bbracket{s+t-1}.
\end{align*}
This means that the shuffle product fails to be a relation solely between mono-brackets.

However, we can apply Proposition \ref{dqonbibrackets} and Theorem \ref{invarianceofd} to see that
\begin{equation*}
    \bbracket{s+t-1\\1}=\frac{1}{s+t-2}\;d_q\negphantom{.}\bbracket{s+t-2}
\end{equation*}
is contained in $\MD$. Generally, we could wonder if these ``failure''-brackets can always be written purely in terms of mono-brackets. In this light, Bachmann \cite[Conj. 4.3, p. 11]{bachbi} conjectured the following.
\begin{conjecture*}
\label{conjecture:bdismd}
Every bi-bracket can be written as a $\Q$-linear combination of mono-brackets, or equivalently, the spaces $\MD$ and $\BD$ coincide. I.e., we have $BD=MD$.
\end{conjecture*}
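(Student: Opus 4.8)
The plan is to prove the non-trivial inclusion $\BD\subseteq\MD$, since $\MD\subseteq\BD$ is immediate from the identification $\bbracket{s_1,\dots,s_l}=\bbracket{s_1,\dots,s_l\\0,\dots,0}$. Concretely, I would show that every bi-bracket $\bbracket{s_1,\dots,s_l\\r_1,\dots,r_l}$ lies in $\MD$ by induction on the \emph{lower weight} $R=r_1+\dots+r_l$, the base case $R=0$ being exactly the definition of a mono-bracket. The two structural tools are the partition relation of Proposition \ref{bibracketalternatief}.$iii$, which interchanges upper and lower indices, and the derivation $d_q=q\frac{d}{dq}$, whose action on bi-brackets is recorded in Proposition \ref{dqonbibrackets} and which, crucially, maps $\MD$ into $\MD$ by Theorem \ref{invarianceofd}.

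The idea is cleanest in length $1$, which I would carry out first as a model. Proposition \ref{dqonbibrackets} gives $d_q\bbracket{s\\r}=s(r+1)\bbracket{s+1\\r+1}$, hence
\[
\bbracket{s+1\\r+1}=\frac{1}{s(r+1)}\,d_q\bbracket{s\\r}.
\]
So if $\bbracket{s\\r}$ is already known to lie in $\MD$ (it has smaller lower weight), then so does $\bbracket{s+1\\r+1}$. By induction on the lower weight this reaches every length-$1$ bi-bracket whose upper index strictly exceeds its lower index, starting from the mono-brackets $\bbracket{a\\0}$. The remaining cases, where the lower index is at least the upper one, reduce to the previous ones by the partition relation $\bbracket{s\\r}=\bbracket{r+1\\s-1}$ of Corollary \ref{length1partition}: applying it turns $\bbracket{a\\b}$ with $a\le b$ into $\bbracket{b+1\\a-1}$, whose upper index $b+1$ now strictly exceeds its lower index $a-1$ (and when $a=1$ it is outright the mono-bracket $\bbracket{b+1}$). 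Thus every length-$1$ bi-bracket is a $\Q$-linear combination of mono-brackets.

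For the inductive step in general length, I would fix $R\ge1$ and assume that all bi-brackets of lower weight $<R$, of every length, lie in $\MD$. Applying $d_q$ to the bi-brackets of lower weight $R-1$ produces, by Proposition \ref{dqonbibrackets}, $\Q$-linear combinations of bi-brackets of lower weight $R$; since the inputs lie in $\MD$ and $d_q$ preserves $\MD$, all these outputs lie in $\MD$. The task is then to show that these $d_q$-images, together with the relations coming from the partition relation of Proposition \ref{bibracketalternatief}.$iii$, span every bi-bracket of lower weight $R$ modulo the already-controlled lower-order terms. I would phrase this as a surjectivity statement on each fixed-weight, fixed-length graded piece and try to solve the resulting system by downward induction on the length, or on a suitable ordering of the index tuples.

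The hard part is exactly this last step. In length $1$ the operator $d_q$ acts by a single invertible scalar, so it can be inverted term by term; but Proposition \ref{dqonbibrackets} shows that in length $l\ge2$, $d_q$ produces a sum $\sum_{k=1}^l s_k(r_k+1)(\cdots)$ over all positions $k$, simultaneously raising the upper and the lower index at each slot. One therefore cannot isolate a single bi-bracket by inverting $d_q$; one must disentangle the contributions of the different positions, and the partition relation — which reverses the order of the entries and replaces them by differences — couples all slots together rather than decoupling them. Establishing that the combined system has full rank (equivalently, that no bi-bracket of positive lower weight escapes the span of the $d_q$-images and the partition relations) is the genuine obstruction, and it is presumably why only a particular case of the conjecture, namely the low-length or bounded-lower-weight situation in which the system can be solved explicitly, can be settled by these methods.
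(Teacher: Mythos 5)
The statement you are proving is Bachmann's conjecture, which the paper does not prove: it is stated as an open problem, and the thesis only establishes special cases (Theorem \ref{bdisinmdl=1} for length $1$, the cited result for lower weight $1$ in length $2$, and Theorem \ref{oddweightl=2} for odd total weight in length $2$). So no complete argument is expected here, and to your credit you do not claim one. The length-$1$ portion of your proposal is correct and is essentially identical to the paper's proof of Theorem \ref{bdisinmdl=1}: reduce to $s>r$ via the partition relation $\bbracket{s\\r}=\bbracket{r+1\\s-1}$, then descend with $\bbracket{s\\r}=\frac{1}{(s-1)r}d_q\bbracket{s-1\\r-1}$ until you hit a mono-bracket, invoking Theorem \ref{invarianceofd} at each step.

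The gap is the one you name yourself, but it is worth being precise about why your toolkit cannot close it even in the cases the paper does handle. For length $l\geq2$ your only sources of elements of $\MD$ in the graded piece of upper weight $S$ and lower weight $R$ are (i) the $d_q$-images of brackets of lower weight $R-1$, of which there are at most $\binom{S-2}{l-1}\binom{R+l-2}{l-1}$, strictly fewer than the $\binom{S-1}{l-1}\binom{R+l-1}{l-1}$ unknowns, and (ii) the partition relation, which does not produce new elements of $\MD$ at all: by Proposition \ref{bibracketalternatief}.$iii$ it is a bijective identification of the $(S,R)$ graded piece with the $(R+l,S-l)$ graded piece, so it at best halves the number of unknowns while the deficit from (i) persists. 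The paper's actual partial result, Theorem \ref{oddweightl=2}, is obtained from a genuinely different and larger supply of relations, namely the stuffle products $\bbracket{z_{s_1,r_1}\ast z_{s_2,r_2}}$ and the shuffle products $P(P(z_{s_1,r_1})\ast P(z_{s_2,r_2}))$, which give $2(S-1)(R+1)$ congruences modulo $\MD$ in $(S-1)(R+1)$ unknowns; the content of the proof is the invertibility of the resulting matrix $P_{S-1}\otimes Q_{R+1}-P_{S-1}^{\sr}\otimes Q_{R+1}^{\sr}$ when $S+R$ is odd, and this system is \emph{not} solvable when $S+R$ is even (see Table \ref{table:dimkermetnullenU}). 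So any serious attack along your inductive lines would have to incorporate the double-shuffle relations as the primary input, with $d_q$ and the partition relation playing only an auxiliary role.
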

Though not a lot is known, there are some cases which have been  solved. We will show that all bi-brackets of length $1$ can be written as linear combinations of mono-brackets.
\begin{theorem}
\label{bdisinmdl=1}
For $s>0$ and $r\geq 0$ we have $\begin{bmatrix}s\\r\end{bmatrix}\in \MD$.
\end{theorem}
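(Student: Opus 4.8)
The plan is to combine two ingredients that are already in place: the action of the operator $d_q = q\frac{d}{dq}$ on length-$1$ bi-brackets, and the length-$1$ partition relation. Specialising Proposition \ref{dqonbibrackets} to $l=1$ gives the single relation
\[
d_q\bbracket{s\\r} = s(r+1)\bbracket{s+1\\r+1},
\]
so that $\bbracket{s+1\\r+1} = \tfrac{1}{s(r+1)}\,d_q\bbracket{s\\r}$, where the prefactor is a genuine rational number because $s\geq 1$ and $r+1\geq 1$. Since $\bbracket{s\\0}=\bbracket{s}$ is by definition a mono-bracket, and since $d_q$ is a self-map of $\MD$ by Theorem \ref{invarianceofd}, applying $d_q$ repeatedly transports membership in $\MD$ along the diagonal $s\mapsto s+1$, $r\mapsto r+1$.

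First I would prove, by induction on the lower index $b$, that $\bbracket{a\\b}\in\MD$ for every pair with $a>b\geq 0$. The base case $b=0$ is immediate, since $\bbracket{a\\0}=\bbracket{a}\in\MD$. For the inductive step, given $a\geq b+2$, the pair $(a-1,b)$ satisfies $a-1>b$, so $\bbracket{a-1\\b}\in\MD$ by the induction hypothesis; applying the displayed $d_q$-relation and invoking Theorem \ref{invarianceofd} then yields $\bbracket{a\\b+1}\in\MD$. This settles all bi-brackets $\bbracket{s\\r}$ with $s>r$.

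The remaining region $s\leq r$ is exactly where the partition relation enters. By the length-$1$ case of Corollary \ref{length1partition} we have $\bbracket{s\\r}=\bbracket{r+1\\s-1}$. When $s\leq r$ the resulting bracket has strictly larger top index than bottom index, since $r+1>s-1$; hence $\bbracket{r+1\\s-1}$ lies in the region already handled, and therefore $\bbracket{s\\r}\in\MD$. Taken together, the two cases cover all $s>0$ and $r\geq 0$.

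I expect the only real subtlety to be organisational rather than computational: one must check that the $d_q$-induction reaches precisely the superdiagonal region $a>b$ (by itself it can never produce a bracket with $a\leq b$, since each application raises both indices by one from the starting line $b=0$), and then verify that the partition relation sends the complementary region $s\leq r$ exactly into that superdiagonal region. Both verifications reduce to the elementary inequalities above, and no identities beyond Proposition \ref{dqonbibrackets}, Theorem \ref{invarianceofd}, and Corollary \ref{length1partition} are required.
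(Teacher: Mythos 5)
Your proposal is correct and uses exactly the same two ingredients as the paper's proof (the length-$1$ specialisation of Proposition \ref{dqonbibrackets} combined with Theorem \ref{invarianceofd}, plus the partition relation $\bbracket{s\\r}=\bbracket{r+1\\s-1}$ to handle $s\leq r$); the only difference is that you run the $d_q$-induction first and apply the partition relation second, whereas the paper reduces to $s>r$ first and then descends along the diagonal. This is a purely organisational difference, not a different argument.
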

\noindent\textit{Proof.} Since we have $\begin{bmatrix}s\\r\end{bmatrix}=\begin{bmatrix}r+1\\s-1\end{bmatrix}$, we may assume that $s>r$. Then, after applying Proposition \ref{dqonbibrackets} and Theorem \ref{invarianceofd} sufficiently many times, we get
\begin{equation*}
    \pushQED{\qed}\begin{bmatrix}s\\r\end{bmatrix}=\frac{1}{(s-1)r}d_q\bbracket{s-1\\r-1}= \hphantom{l}\cdots\hphantom{l}=C\cdot (d_q)^r\begin{bmatrix}s-r\\0\end{bmatrix}=C\cdot(d_q)^r[s-r]\in \MD.
\qedhere\popQED
\end{equation*}
Now consider the case $l=2$. Bachmann \cite[Prop. 5.9, p. 17]{bachbi} showed that bi-brackets of lower weight $1$ can be written as a $\Q$-linear relation of mono-brackets.
\begin{theorem}
For $s_1,s_2>0$ we have $\bbracket{s_1,s_2\\1,0},\bbracket{s_1,s_2\\0,1}\in\MD$.
\end{theorem}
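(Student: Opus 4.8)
The plan is to reduce the two families to a single one and then attack it through generating functions, mirroring the proof of Theorem~\ref{invarianceofd}. First I would dispose of one family in favour of the other: applying the stuffle product (Proposition~\ref{stufflel=1bibracket}) to the length-$1$ bi-brackets $\bbracket{s_1\\1}$ and $\bbracket{s_2\\0}$, both of which lie in $\MD$ by Theorem~\ref{bdisinmdl=1}, and noting that every correction term on the right-hand side is a length-$1$ bracket (again in $\MD$), yields $\bbracket{s_1,s_2\\1,0}+\bbracket{s_2,s_1\\0,1}\in\MD$. Hence it suffices to prove $\bbracket{s_1,s_2\\1,0}\in\MD$ for all $s_1,s_2>0$; the $\bbracket{\,\cdot\,\\0,1}$ brackets then follow. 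The case $s_1=s_2=1$ is already settled, since $\bbracket{1,1\\1,0}$ and $\bbracket{1,1\\0,1}$ are of the form $\bbracket{1,1\\r_1,r_2}$, whose partition relation (Corollary~\ref{length1partition}) expresses them purely in mono-brackets.

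For the main step I would pass to generating functions and use the partition relation to obtain a closed form. Writing $L_u(X)=\tfrac{e^Xq^u}{1-e^Xq^u}$ as in Proposition~\ref{bibracketalternatief}, the series $\mathcal A(X_1,X_2)=\sum_{u_1>u_2>0}u_1L_{u_1}(X_1)L_{u_2}(X_2)$ has $\bbracket{s_1,s_2\\1,0}$ as the coefficient of $X_1^{s_1-1}X_2^{s_2-1}$, and $\mathcal A=\partial_{Y_1}\bgen{X_1,X_2\\Y_1,Y_2}\big|_{Y_1=Y_2=0}$. Applying $\partial_{Y_1}|_{Y=0}$ to both sides of the partition identity (Proposition~\ref{bibracketalternatief}.$iii$) rewrites $\mathcal A$ as an explicit double series over $u_1>u_2>0$ whose coefficients are products of $\tfrac{q^{u}}{1-q^{u}}$ and $\tfrac{q^{u}}{(1-q^{u})^{2}}$. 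The task is then to recognise this series as a combination of mono-bracket generating functions. Here I would use the mono generating function $T(X_1,X_2)=\sum_{u_1>u_2>0}L_{u_1}(X_1)L_{u_2}(X_2)$ of Lemma~\ref{generatingmono}, the local identities $d_qL_u(X)=u\,\partial_XL_u(X)$ and $\tfrac{q^{u}}{(1-q^{u})^{2}}=\tfrac1u\,d_q\tfrac{q^{u}}{1-q^{u}}$, and the consequence
\[
d_qT(X_1,X_2)=\partial_{X_1}\mathcal A(X_1,X_2)+\partial_{X_2}\mathcal B(X_1,X_2),
\]
whose left-hand side lies in $\MD$ because $d_q$ preserves $\MD$ (Theorem~\ref{invarianceofd}), in order to match the squared factors against $d_q$ of mono-bracket generating functions. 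Extracting the coefficient of $X_1^{s_1-1}X_2^{s_2-1}$ should then present $\bbracket{s_1,s_2\\1,0}$ as an explicit $\Q$-linear combination of mono-brackets.

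The hard part is exactly this recognition/separation step. The purely formal relations available — the stuffle product, the partition relation, and the derivation applied to mono-brackets and to the brackets $\bbracket{1,1\\r_1,r_2}$ — turn out to be mutually dependent: in each weight they collapse to a single relation and leave one undetermined ``boundary'' parameter (already visible in weight $4$, where they give only $\bbracket{1,2\\1,0}\equiv-2\,\bbracket{2,1\\1,0}$ modulo $\MD$, with no further constraint on $\bbracket{2,1\\1,0}$). Breaking this degeneracy forces one back to the analytic description, so the genuine obstacle is to carry out the coefficient extraction above and to cancel the spurious $\tfrac1u$ factors coming from the squared summands, while keeping track of the non-admissible corrections; a secondary, routine nuisance is checking that these corrections are always length-$1$ members of $\MD$.
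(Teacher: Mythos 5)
First, a point of comparison: the paper does not prove this theorem at all --- it is quoted from Bachmann's work (\cite[Prop.~5.9]{bachbi}) without reproducing the argument --- so there is no in-paper proof to measure yours against. Judged on its own, your opening reduction is correct and worth keeping: expanding $\bbracket{s_1\\1}\cdot\bbracket{s_2\\0}$ via Proposition~\ref{stufflel=1bibracket}, and noting that the left-hand side lies in $\MD$ (Theorem~\ref{bdisinmdl=1} together with the closure of $\MD$ under products, Proposition~\ref{homomorphismmonobrackets}) while every correction term has length $1$ and hence also lies in $\MD$, does give $\bbracket{s_1,s_2\\1,0}+\bbracket{s_2,s_1\\0,1}\in\MD$, so it suffices to treat one of the two families. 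The observation about $\bbracket{1,1\\r_1,r_2}$ via Corollary~\ref{length1partition} is also correct.

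The main step, however, is a plan rather than a proof, and you have yourself diagnosed why it cannot close as written. Every tool you invoke --- the stuffle products $\bbracket{s_1\\r_1}\bbracket{s_2\\r_2}$ with $r_1+r_2=1$, the shuffle/partition relations, and $d_q$ applied to length-$2$ mono-brackets --- produces relations that are linearly dependent modulo $\MD$: the paper's own $S=3$, $R=1$ example exhibits the singular matrix, and your weight-$4$ computation ($\bbracket{1,2\\1,0}\equiv-2\bbracket{2,1\\1,0}$ with $\bbracket{2,1\\1,0}$ left free) is accurate. Crucially, the generating-function identity $d_qT(X_1,X_2)=\partial_{X_1}\mathcal{A}+\partial_{X_2}\mathcal{B}$ is nothing but Proposition~\ref{dqonbibrackets} combined with Theorem~\ref{invarianceofd}, i.e.\ the relation $s_1\bbracket{s_1+1,s_2\\1,0}+s_2\bbracket{s_1,s_2+1\\0,1}\equiv0 \bmod \MD$, so it adds no information beyond the system you have already shown to be degenerate. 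The proposed escape --- recovering $\mathcal{A}$ itself, rather than the combination $\partial_{X_1}\mathcal{A}+\partial_{X_2}\mathcal{B}$, by ``cancelling the spurious $1/u$ factors'' in the analytic expansion --- is precisely where a new idea is required, and none is supplied; indeed, the free parameter you exhibit shows that no amount of re-manipulating these same identities can isolate $\bbracket{s_1,s_2\\1,0}$. The proof is therefore incomplete at its core; closing it requires the additional input contained in Bachmann's cited proposition, not a rearrangement of the relations already available in this paper.
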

Additionally, the statement of the conjecture is true for brackets with odd total weight. This statement is the principal result of this thesis.
\begin{theorem}
\label{oddweightl=2}
Let $s_1,s_2>0$ and $r_1,r_2\geq0$ be integers. If $s_1+s_2+r_1+r_2$ is odd, we have $\bbracket{s_1,s_2\\r_1,r_2}\in \MD$.
\end{theorem}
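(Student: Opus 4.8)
The plan is to pass to the generating functions of the length-$2$ bi-brackets and to combine two structural relations: the partition relation, which is an \emph{exact} involution of these generating functions, and the stuffle relation, which becomes an \emph{anti}-involution once we work modulo $\MD$. Set
\begin{equation*}
    F(X_1,X_2,Y_1,Y_2)=\begin{vmatrix}X_1,X_2\\Y_1,Y_2\end{vmatrix}=\sum_{\substack{s_1,s_2>0\\r_1,r_2\geq 0}}\bbracket{s_1,s_2\\r_1,r_2}X_1^{s_1-1}X_2^{s_2-1}Y_1^{r_1}Y_2^{r_2},
\end{equation*}
so the coefficient of $X_1^{s_1-1}X_2^{s_2-1}Y_1^{r_1}Y_2^{r_2}$ has total weight $N=s_1+s_2+r_1+r_2$, which equals its total monomial degree plus $2$. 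I regard $F$ as a formal power series with coefficients in $\BD$ and read all identities modulo the subspace $\MD\subseteq\BD$. Since $\MD$ is a subalgebra (mono-brackets multiply within $\MD$ by Proposition \ref{homomorphismmonobrackets}), it is preserved by the degree-preserving linear substitutions used below, so coefficient extraction is compatible with ``$\bmod\ \MD$''.

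First I would record the two relations. The partition relation (Corollary \ref{generatingpartitionexample}) is the exact identity $F\circ P=F$, where $\circ$ denotes substitution of linear forms and
\begin{equation*}
    P\colon (X_1,X_2,Y_1,Y_2)\longmapsto (Y_1+Y_2,\,Y_1,\,X_2,\,X_1-X_2).
\end{equation*}
Next, Proposition \ref{stufflel=1bibracket} writes $\bbracket{s_1\\r_1}\cdot\bbracket{s_2\\r_2}$ as $\bbracket{s_1,s_2\\r_1,r_2}+\bbracket{s_2,s_1\\r_2,r_1}$ plus a sum of \emph{length-$1$} bi-brackets. By Theorem \ref{bdisinmdl=1} every length-$1$ bi-bracket lies in $\MD$, and the left-hand side is a product of two elements of $\MD$, hence is itself in $\MD$. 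Reading this coefficient-wise yields the anti-invariance $F+F\circ S\equiv 0 \pmod{\MD}$, where $S\colon(X_1,X_2,Y_1,Y_2)\mapsto(X_2,X_1,Y_2,Y_1)$.

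A one-line induction on word length then gives $F\circ g\equiv(-1)^{\#S(g)}F \pmod{\MD}$ for every word $g$ in the letters $P,S$, where $\#S(g)$ is the number of $S$-letters (using $F\circ P=F$ and $F\circ S\equiv -F$, both stable under further substitution). I would then compute the composite $T=P\circ S$, namely $T(X_1,X_2,Y_1,Y_2)=(Y_1+Y_2,\,Y_2,\,X_1,\,X_2-X_1)$, with matrix
\begin{equation*}
    T=\begin{pmatrix} 0 & 0 & 1 & 1 \\ 0 & 0 & 0 & 1 \\ 1 & 0 & 0 & 0 \\ -1 & 1 & 0 & 0 \end{pmatrix},
\end{equation*}
whose characteristic polynomial is $\lambda^4-\lambda^2+1$ (the twelfth cyclotomic polynomial). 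By Cayley--Hamilton $T^4=T^2-I$, whence $T^6=T^2T^4=T^4-T^2=-I$. Thus $(PS)^6=-\mathrm{id}$, a substitution reached with an \emph{even} number ($6$) of $S$-letters. This order-$12$ computation, and specifically the identity $T^6=-\mathrm{id}$, is the crux and the main obstacle: it is exactly what converts the sign bookkeeping into a statement governed by the parity of the weight.

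Finally I would combine the two facts. Since $\#S\big((PS)^6\big)=6$ is even, $F\circ(PS)^6\equiv F\pmod{\MD}$; but $(PS)^6=-\mathrm{id}$ means $F\circ(PS)^6=F(-X_1,-X_2,-Y_1,-Y_2)$. Comparing coefficients of $X_1^{s_1-1}X_2^{s_2-1}Y_1^{r_1}Y_2^{r_2}$, of degree $N-2$, gives
\begin{equation*}
    (-1)^{N-2}\bbracket{s_1,s_2\\r_1,r_2}\equiv \bbracket{s_1,s_2\\r_1,r_2}\pmod{\MD}.
\end{equation*}
When $N$ is odd this reads $-\bbracket{s_1,s_2\\r_1,r_2}\equiv\bbracket{s_1,s_2\\r_1,r_2}$, i.e. $2\bbracket{s_1,s_2\\r_1,r_2}\in\MD$; since $\MD$ is a $\Q$-vector space we conclude $\bbracket{s_1,s_2\\r_1,r_2}\in\MD$, which is the assertion of the theorem.
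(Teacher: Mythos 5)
Your proof is correct, and it takes a genuinely different route from the paper's primary argument. The paper fixes $S=s_1+s_2$ and $R=r_1+r_2$, writes the stuffle and shuffle relations as a linear system in the unknowns $\bbracket{z_{s_1,r_1}z_{s_2,r_2}}$, and proves that the coefficient matrix $P_{S-1}\otimes Q_{R+1}-P_{S-1}^{\rho\sigma}\otimes Q_{R+1}^{\rho\sigma}$ is invertible for $S+R$ odd by exhibiting an explicit inverse and verifying four binomial identities (Lemma \ref{binomialidentities}). You instead work with the full length-two generating function and the group generated by the partition substitution $P$ and the swap $S$: the exact invariance $F\circ P=F$, the anti-invariance $F\circ S\equiv -F$ modulo $\MD$ (which requires exactly Theorem \ref{bdisinmdl=1} plus the fact that $\MD$ is a subalgebra), and the order computation $(PS)^6=-\mathrm{id}$ coming from the characteristic polynomial $\lambda^4-\lambda^2+1$ of $PS$. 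Since $(PS)^6$ uses an even number of $S$-letters, this yields $F(-\mathbf{x})\equiv F(\mathbf{x})$ and hence the parity statement. I checked the matrix of $PS$, its characteristic polynomial, and $T^6=-I$; all are correct, and the bookkeeping of compositions and of ``stability under substitution'' is sound (the latter because the substitutions are degree-preserving $\Q$-linear maps and $\MD$ is a $\Q$-subspace --- note that this, rather than the subalgebra property you cite there, is what makes coefficient extraction compatible with reduction mod $\MD$; the subalgebra property is what you need to place $\bbracket{s_1\\r_1}\bbracket{s_2\\r_2}$ in $\MD$). Your argument is close in spirit to the alternative proof sketched at the end of Section \ref{subsection:theproof}: the six products appearing in (\ref{eq:srgeneratorsl=2}) are essentially the orbit your iteration traverses. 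But your version isolates the structural reason the parity hypothesis enters --- the twelfth cyclotomic polynomial forcing $(PS)^6=-\mathrm{id}$ --- and needs neither the explicit inverse matrix nor the explicit six-term formula. What the paper's matrix proof buys in exchange is an explicit expression for each individual $\bbracket{s_1,s_2\\r_1,r_2}$ in terms of products of length-one brackets, which is what feeds the parity discussion for multiple zeta values in Section \ref{subsection:paritymzv}; your argument establishes membership in $\MD$ without producing such a formula directly.
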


The proof of Theorem \ref{oddweightl=2} is based on Theorem \ref{bdisinmdl=1} and the observation that the bi-brackets occurring in the shuffle and stuffle product $\bbracket{s_1\\r_1}\bbracket{s_2\\r_2}$ all have the same upper weight $s_1+s_2$ and lower weight $r_1+r_2$. This means that it is natural to consider all products $\bbracket{s_1\\r_1}\bbracket{s_2\\r_2}$ for fixed $s_1+s_2=S$ and $r_1+r_2=R$. We will demonstrate the proof of Theorem \ref{oddweightl=2} in an example.\\

\noindent\textbf{Example.} Let us fix $S=3$ and $R=2$. Then all possible stuffle and shuffle products $\bbracket{s_1\\r_1}\bbracket{s_2\\r_2}$ with $s_1+s_2=3$ and $r_1+r_2=2$ are 
\begin{alignat*}{7}
    \bbracket{2\\2}\bbracket{1\\0}&= \bbracket{2,1\\2,0}+&&&&&&&&&&\hphantom{2}\bbracket{1,2\\0,2}&&\qquad+\bbracket{3\\2}-\frac{1}{2}\bbracket{2\\2},\\
    \bbracket{2\\1}\bbracket{1\\1}&=&& \hphantom{2}\bbracket{2,1\\1,1}+&&&&&&\bbracket{1,2\\1,1}&&&&\qquad+2\bbracket{3\\2}-\bbracket{2\\2},\\
    \bbracket{2\\0}\bbracket{1\\2}&=&&&& \hphantom{4}\bbracket{2,1\\0,2}+&&\bbracket{1,2\\2,0}&&&&&&\qquad+\bbracket{3\\2}-\frac{1}{2}\bbracket{2\\2},\\
\intertext{and}
    \bbracket{2\\2}\bbracket{1\\0}&= \bbracket{2,1\\2,0}-&&\hphantom{2}\bbracket{2,1\\1,1}+&&2\bbracket{2,1\\0,2}+&&&&&&\hphantom{2}\bbracket{1,2\\0,2}&&\qquad+\bbracket{2\\3}+\frac{1}{12}\bbracket{2\\1}-\frac{1}{2}\bbracket{2\\2},\\
    \bbracket{2\\1}\bbracket{1\\1}&=&&2\bbracket{2,1\\1,1}-&&4\bbracket{2,1\\0,2}+&&&&\bbracket{1,2\\1,1}-&&2\bbracket{1,2\\0,2}&&\qquad+\bbracket{2\\3}-\frac{1}{6}\bbracket{2\\1},\\
    \bbracket{2\\0}\bbracket{1\\2}&= \bbracket{2,1\\2,0}-&&\hphantom{2}\bbracket{2,1\\1,1}+&&2\bbracket{2,1\\0,2}+&&\bbracket{1,2\\2,0}-&&\bbracket{1,2\\1,1}+&&\hphantom{2}\bbracket{1,2\\0,2}&&\qquad+\bbracket{2\\3}+\frac{1}{12}\bbracket{2\\1}-\frac{1}{2}\bbracket{2\\2}.
\end{alignat*}
Now suppose that the bi-brackets of length $2$ in these relations are unknowns, and all the other terms are known. Solving for the bi-brackets means finding an inverse of the matrix
\begin{equation*}
    M=\begin{pmatrix}1&0&0&0&0&1\\0&1&0&0&1&0\\0&0&1&1&0&0\\1&-1&2&0&0&1\\0&2&-4&0&1&-2\\1&-1&2&1&-1&1\end{pmatrix}.
\end{equation*}
And indeed, $M$ is invertible with its inverse given by
\begin{equation*}
    M^{-1}=\frac{1}{2}\begin{pmatrix}-1& 1& -2& 1& 1& 2\\-2& 4& -4& -2& 0& 4\\-2& 2& -2& 0& 0& 
  2\\2& -2& 4& 0& 0& -2\\2& -2& 4& 2& 0& -4\\3& -1& 
  2& -1& -1& -2\end{pmatrix}.
\end{equation*}
Explicitly, this gives us the following relations between bi-brackets
\begin{align*}
    \bbracket{2,1\\2,0}&=\bbracket{2\\1}\bbracket{1\\1}+\frac{1}{2}\bbracket{3\\2}-2\bbracket{2\\3}+\frac{1}{2}\bbracket{2\\2}-\frac{1}{24}\bbracket{2\\1},\\
    \bbracket{2,1\\1,1}&=-2\bbracket{2\\2}\bbracket{1\\0}+2\bbracket{2\\1}\bbracket{1\\1}-\bbracket{3\\2}-\bbracket{2\\3}+\bbracket{2\\2}-\frac{1}{12}\bbracket{2\\1},\\
    \bbracket{2,1\\0,2}&=-\bbracket{2\\2}\bbracket{1\\0}+\bbracket{2\\1}\bbracket{1\\1}-\bbracket{2\\3}+\frac{1}{2}\bbracket{2\\2}-\frac{1}{12}\bbracket{2\\1},\\
    \bbracket{1,2\\2,0}&=\bbracket{2\\2}\bbracket{1\\0}-\bbracket{2\\1}\bbracket{1\\1}+\bbracket{2\\0}\bbracket{1\\2}-\bbracket{3\\2}+\bbracket{2\\3}+\frac{1}{12}\bbracket{2\\1},\\
    \bbracket{1,2\\1,1}&=2\bbracket{2\\2}\bbracket{1\\0}-\bbracket{2\\1}\bbracket{1\\1}-\bbracket{3\\2}+\bbracket{2\\3}-\frac{1}{12}\bbracket{2\\1},\\
    \bbracket{1,2\\0,2}&=\bbracket{2\\2}\bbracket{1\\0}-\bbracket{2\\1}\bbracket{1\\1}-\frac{3}{2}\bbracket{3\\2}-2\bbracket{2\\3}+\frac{1}{24}\bbracket{2\\1}.
\end{align*}
Theorem \ref{bdisinmdl=1} tells us that all terms on the right-hand side of these equations can be written as $\Q$-linear combinations of mono-brackets. We have shown that the brackets $\bbracket{2,1\\2,0},\bbracket{2,1\\1,1},\dots,\bbracket{1,2\\0,2}$ are contained in $\MD$. \\

We could wonder if we can apply this method for every upper weight $S$ and lower weight $R$. The answer is no. For example, if we let $S=3$ and $R=1$, we consider the relations
\begin{alignat*}{5}
    \bbracket{2\\1}\bbracket{1\\0}&= \bbracket{2,1\\1,0}+&&&&&&\bbracket{1,2\\0,1}&&\qquad+\bbracket{3\\1}-\frac{1}{2}\bbracket{2\\1},\\
    \bbracket{2\\0}\bbracket{1\\1}&=&& \bbracket{2,1\\0,1}+&&\bbracket{1,2\\1,0}&&&&\qquad+\bbracket{3\\1}-\frac{1}{2}\bbracket{2\\1},\\
\intertext{and}
    \bbracket{2\\1}\bbracket{1\\0}&= \bbracket{2,1\\1,0}+&&&&&&\bbracket{1,2\\0,1}&&\qquad+\bbracket{2\\2}-\frac{1}{2}\bbracket{2\\1},\\
    \bbracket{2\\0}\bbracket{1\\1}&=\bbracket{2,1\\1,0}+&&&&\bbracket{1,2\\1,0}-&&\bbracket{1,2\\0,1}&&\qquad+\bbracket{2\\2}-\frac{1}{2}\bbracket{2\\1}.
\end{alignat*}
However, the corresponding matrix $\begin{pmatrix}1&0&0&1\\0&1&1&0\\1&0&0&1\\1&0&1&-1\end{pmatrix}$ is not invertible. It turns out that this system can be solved for the bi-brackets of length $2$ if the fixed total weight $S+R$ is odd.\\

% \subsection{Other motivations for ``$MD=BD$''}
% \label{subsection:motivationmd=bd}
\noindent\textbf{Remark.} We have the following identity between brackets
\begin{equation*}
    \bbracket{2,2\\0,1}=\bbracket{3,2}-\bbracket{2,2,1}+\frac{1}{2}\bbracket{2,2},
\end{equation*}
which arises from the conjecture ``$\MD=\BD$''. When we apply the limit $\lim_{q\to1^-}(1-q)^5$ to this relation, the bi-bracket vanishes since we have
\begin{equation*}
    \lim_{q\to1^-}(1-q)^N\bbracket{s_1,\dots,s_l\\r_1,\dots,r_l}=0
\end{equation*}
whenever $s_1+\dots+s_l<N$. This gives us the (duality) relation $\zeta(3,2)=\zeta(2,2,1)$ between MZVs. This means that this conjecture generates relations between MZVs. It is, unofficially, believed that all possible relations between MZVs arise this way.
\subsection{The proof}
\label{subsection:theproof}
Let us return to the algebraic setting of the bi-brackets. For the proof of Theorem \ref{bdisinmdl=1} we will consider the altered stuffle and shuffle product defined by
\begin{align*}
    z_{s_1,r_1}\astt z_{s_2,r_2}&=z_{s_1,r_1}z_{s_2,r_2}+z_{s_2,r_2}z_{s_1,r_1}\\
\intertext{and}
    z_{s_1,r_1}\shuf z_{s_2,r_2}&=P\big( P(z_{s_1,r_1})\;\astt \;P(z_{s_2,r_2})\big)\\
    &=\sum_{\substack{1\leq l\leq s_1\\ 0\leq k\leq r_2}}{s_1+s_2-1-l \choose s_2-1}{r_1+r_2-k \choose r_1}(-1)^{r_2-k}z_{s_1+s_2-l,k}z_{l,r_1+r_2-k}\\
    &\quad+\sum_{\substack{1\leq l\leq s_2\\ 0\leq k\leq r_1}}{s_1+s_2-1-l \choose s_1-1}{r_1+r_2-k \choose r_2}(-1)^{r_1-k}z_{s_1+s_2-l,k}z_{l,r_1+r_2-k}.
\end{align*}
These are precisely the usual stuffle and shuffle product, where we omit the words of length $1$. These altered stuffle and shuffle products satisfy
\begin{align}
    \bbracket{z_{s_1,r_1}\astt z_{s_2,r_2}}&\equiv\bbracket{z_{s_1,r_1}\ast z_{s_2,r_2}}=\bbracket{z_{s_1,r_1}}\bbracket{ z_{s_2,r_2}}&&\negphantom{mmmmmi.}\equiv 0\qquad\qquad\text{mod } \MD,\\
    \bbracket{z_{s_1,r_1}\shuf z_{s_2,r_2}}&\equiv\bbracket{z_{s_1,r_1}\shuffle z_{s_2,r_2}}=\bbracket{z_{s_1,r_1}}\bbracket{ z_{s_2,r_2}}&&\negphantom{mmmmmi.}\equiv0\qquad\qquad\text{mod } \MD.
\end{align}
This means that, when we are only interested in the bi-brackets modulo $\MD$, it is sufficient to consider these altered products.\\

In the proof of Theorem \ref{oddweightl=2}, the notation ``$\equiv$" means ``modulo the space $\MD$".\\

\noindent\textit{Proof (Theorem \ref{oddweightl=2}).} Let us fix $S,R\in \Z$ with $S\geq2$ and $R\geq 0$, and assume that $S+R$ is odd. For letters $z_{s_1,r_1}$ and $z_{s_2,r_2}$ with $s_1+s_2=S$ and $r_1+r_2=R$, all terms occurring in the products $z_{s_1,r_1} \astt z_{s_2,r_2}$ and $z_{s_1,r_1} \shuf z_{s_2,r_2}$ will have upper weight $S$ and lower weight $R$. Consider the pairs 
\begin{align*}
    Z=\{(z_{s_1,r_1},z_{s_2,r_2}) \mid s_1+s_2=S \text{ and } r_1+r_2=R\}.
\end{align*}
These pairs generate a system of equations, namely
\begin{align}
    0\equiv \bbracket{z_{s_1,r_1}\astt z_{s_2,r_2}}&=\bbracket{z_{s_1,r_1}z_{s_2,r_2}} +\bbracket{z_{s_2,r_2}z_{s_1,r_1}},\label{eq:system11}\\
    0\equiv \bbracket{z_{s_1,r_1}\shuf z_{s_2,r_2}}&=\sum_{\substack{1\leq l\leq s_1\\ 0\leq k\leq r_2}}{s_1+s_2-l-1 \choose s_2-1}{r_1+r_2-k \choose r_1}(-1)^{r_2-k}\bbracket{z_{s_1+s_2-l,k}z_{l,r_1+r_2-k}}\nonumber\\
    &\quad+\sum_{\substack{1\leq l\leq s_2\\ 0\leq k\leq r_1}}{s_1+s_2-l-1 \choose s_1-1}{r_1+r_2-k \choose r_2}(-1)^{r_1-k}\bbracket{z_{s_1+s_2-l,k}z_{l,r_1+r_2-k}}.\label{eq:system13}
\end{align}
This system consists of $2(S-1)(R+1)$ equations, while we also have $|Z|=(S-1)(R+1)$. Hence the matrix corresponding to the system above is non-square. To reduce this system of equations, we apply the relation $$\bbracket{z_{s_1,r_1}z_{s_2,r_2}}\equiv -\bbracket{z_{s_2,r_2}z_{s_1,r_1}},$$ coming from (\ref{eq:system11}), to the terms in the second sum in (\ref{eq:system13}).
Additionally, we use the fact that ${a\choose b}=0$ if $b>a$ or if $b<0$, apply the transformation $k\mapsto R+1-k$, and use the alternative parametrization
\begin{equation*}
    Z=\{(z_{S-n,R+1-m},z_{n,m-1}) \mid 1\leq n\leq S-1 \text{ and } 1\leq m\leq R+1\}.
\end{equation*}
We will show that this new system of equations, which is given by
\begin{align}
\label{system2}
    0&\equiv\sum_{\substack{1\leq l\leq S-1\\ 1\leq k\leq R+1}}{S-1-l \choose n-1}{k-1 \choose R+1-m}(-1)^{R+m+k}\begin{bmatrix}z_{S-l,R+1-k}z_{l,k-1}\end{bmatrix}\nonumber\\
    &\quad-\sum_{\substack{1\leq l\leq S-1\\ 1\leq k\leq R+1}}{S-1-l \choose n-l}{k-1 \choose m-1}(-1)^{m+k}\begin{bmatrix}z_{l,k-1}z_{S-l,R+1-k}\end{bmatrix},
\end{align}
has a solution with respect to the terms $\bbracket{z_{s_1,r_1}z_{s_2,r_2}}$.\\

Define the $N\times N$-matrices $P_N$ and $Q_N$ by 
\begin{equation*}
    (P_N)_{i,j}={N-j\choose i-1},\qquad\qquad (Q_N)_{i,j}=(-1)^{N+i+j+1}{j-1 \choose N-i}.
\end{equation*}
For an $N\times N$-matrix $A$, define $\rho$ and $\sigma$ to be the horizontal and vertical reflections
\begin{equation*}
    (A^\rho)_{i,j}=A_{N+1-i,j},\qquad\qquad(A^\sigma)_{i,j}=A_{i,N+1-j}.
\end{equation*}
Then the matrix corresponding to System (\ref{system2}) is given by $P_{S-1}\otimes Q_{R+1}-P_{S-1}^{\rho\sigma}\otimes Q_{R+1}^{\rho\sigma}$.\\

We claim that $P_{S-1}\otimes Q_{R+1}-P_{S-1}^{\rho\sigma}\otimes Q_{R+1}^{\rho\sigma}$ is invertible, with its inverse given by 
\begin{equation*}
    \frac{1}{2}\left(M_{S,R}+Q_{S-1}\otimes P_{R+1}-Q_{S-1}^{\rho\sigma}\otimes P_{R+1}^{\rho\sigma}\right).
\end{equation*}
Here, $M_{S,R}$ is the matrix defined by $M_{S,R}=J_{S-1}\otimes J_{R+1}$, with $(J_n)_{i,j}=\begin{cases}(-1)^{i+1}&\text{ if }i=j\\0&\text{ if } i\neq j\end{cases}$.

Consider the product
\begin{align*}
    \Sigma\coloneqq\left(P_{S-1}\otimes \right.&\left.Q_{R+1}-P_{S-1}^{\rho\sigma}\otimes\ Q_{R+1}^{\rho\sigma}\right)\left(Q_{S-1}\otimes P_{R+1}-Q_{S-1}^{\rho\sigma}\otimes P_{R+1}^{\rho\sigma}\right)=P_{S-1}Q_{S-1}\otimes Q_{R+1}P_{R+1}\\
    &-P_{S-1}Q_{S-1}^{\rho\sigma}\otimes Q_{R+1}P_{R+1}^{\rho\sigma}-P_{S-1}^{\rho\sigma}Q_{S-1}\otimes Q_{R+1}^{\rho\sigma}P_{R+1}+P_{S-1}^{\rho\sigma}Q_{S-1}^{\rho\sigma}\otimes Q_{R+1}^{\rho\sigma}P_{R+1}^{\rho\sigma}.
\end{align*}
We have the following equalities (see Lemma \ref{binomialidentities}):
\begin{align*}
    (P_{S-1}Q_{S-1})_{i,j}&=\sum_{k=1}^{S-1} (-1)^{k+j+S}{S-1-k \choose i-1}{j-1\choose S-1-k}=\begin{cases}1 & \qquad\text{ if }i=j\\
    0&\qquad\text{ if }i\neq j\end{cases},\\
    (Q_{R+1}P_{R+1})_{i,j}&=\sum_{k=1}^{R+1} (-1)^{k+i+R}{k-1 \choose R+1-i}{R+1-j\choose k-1}=\begin{cases}1 & \qquad\text{ if }i=j\\
    0&\qquad\text{ if }i\neq j\end{cases},\\
    (P_{S-1}Q^{\rho\sigma}_{S-1})_{i,j}&=\sum_{k=1}^{S-1} (-1)^{k+j+S}{S-1-k \choose i-1}{S-1-j\choose k-1}=(-1)^{j+S+1}{j-1\choose S-1-i},\\
    (Q_{R+1}P^{\rho\sigma}_{R+1})_{i,j}&=\sum_{k=1}^{R+1} (-1)^{k+i+R}{j-1\choose R+1-k}{k-1 \choose R+1-i}=(-1)^{i+1}{R+1-j\choose i-1},\\
    (P_{S-1}^{\rho\sigma}Q_{S-1})_{i,j}&=\sum_{k=1}^{S-1}(-1)^{k+j+S}{j-1\choose S-1-k}{k-1\choose S-1-i}=(-1)^{j+1}{S-1-j\choose i-1},\\
    (Q_{R+1}^{\rho\sigma}P_{R+1})_{i,j}&=\sum_{k=1}^{R+1}(-1)^{k+i+R}{R+1-k\choose i-1}{R+1-j\choose k-1}=(-1)^{i+R+1}{j-1\choose R+1-i},\\
    (P^{\rho\sigma}_{S-1}Q^{\rho\sigma}_{S-1})_{i,j}&=\sum_{k=1}^{S-1} (-1)^{k+j+S}{k-1 \choose S-1-i}{S-1-j\choose k-1}=\begin{cases}1 & \qquad\text{ if }i=j\\
    0&\qquad\text{ if }i\neq j\end{cases},\\
    (Q^{\rho\sigma}_{R+1}P^{\rho\sigma}_{R+1})_{i,j}&=\sum_{k=1}^{R+1} (-1)^{k+i+R}{R+1-k \choose i-1}{j-1\choose R+1-k}=\begin{cases}1 & \qquad\text{ if }i=j\\
    0&\qquad\text{ if }i\neq j\end{cases}.
\end{align*}
Note that the last four identities follow from the first four.
On the other hand, we have 
\begin{align*}
    (P_{S-1}J_{S-1})_{i,j}&=(-1)^{j+1}{S-1-j\choose i-1}\\
    (Q_{R+1}J_{R+1})_{i,j}&=(-1)^{i+R+1}{j-1\choose R+1-i}\\
    (P^{\rho\sigma}_{S-1}J_{S-1})_{i,j}&=(-1)^{j+1}{j-1\choose S-1-i}\\
    (Q^{\rho\sigma}_{R+1}J_{R+1})_{i,j}&=(-1)^{i+R+1}{R+1-j\choose i-1}.
\end{align*}
Since we assumed that $S+R$ is odd, either $S$ is even and $R$ is odd, or $S$ is odd and $R$ is even. In the first case we have $P_{S-1}Q^{\rho\sigma}_{S-1}=P^{\rho\sigma}_{S-1}J_{S-1}$ and $Q_{R+1}P^{\rho\sigma}_{R+1}=-Q^{\rho\sigma}_{R+1}J_{R+1}$. In the second case we get $P_{S-1}Q^{\rho\sigma}_{S-1}=-P^{\rho\sigma}_{S-1}J_{S-1}$ and $Q_{R+1}P^{\rho\sigma}_{R+1}=Q^{\rho\sigma}_{R+1}J_{R+1}$.
In either case, we have the following equality
\begin{equation*}
    P_{S-1}Q^{\rho\sigma}_{S-1}\otimes Q_{R+1}P^{\rho\sigma}_{R+1}=-P^{\rho\sigma}_{S-1}J_{S-1}\otimes Q^{\rho\sigma}_{R+1}J_{R+1}.
\end{equation*}
Additionally, we always have
\begin{equation*}
    P_{S-1}^{\rho\sigma}Q_{S-1}\otimes Q_{R+1}^{\rho\sigma}P_{R+1}=P_{S-1}J_{S-1}\otimes Q_{R+1}J_{R+1}.
\end{equation*}

Combining all the parts, we obtain
\begin{align*}
    \Sigma&=2I_{(S-1)(R+1)}-P_{S-1}Q_{S-1}^{\rho\sigma}\otimes Q_{R+1}P_{R+1}^{\rho\sigma}-P_{S-1}^{\rho\sigma}Q_{S-1}\otimes Q_{R+1}^{\rho\sigma}P_{R+1}\\
    &=2I_{(S-1)(R+1)}+P^{\rho\sigma}_{S-1}J_{S-1}\otimes Q^{\rho\sigma}_{R+1}J_{R+1}-P_{S-1}J_{S-1}\otimes Q_{R+1}J_{R+1}\\
    &=2I_{(S-1)(R+1)}-\left(P_{S-1}\otimes Q_{R+1}-P^{\rho\sigma}_{S-1}\otimes Q^{\rho\sigma}_{R+1}\right)M_{S,R}.
\end{align*}
This proves the desired claim:
\begin{equation*}
    \frac{1}{2}\left(P_{S-1}\otimes Q_{R+1}-P_{S-1}^{\rho\sigma}\otimes Q_{R+1}^{\rho\sigma}\right)\left(M_{S,R}+Q_{S-1}\otimes P_{R+1}-Q_{S-1}^{\rho\sigma}\otimes P_{R+1}^{\rho\sigma}\right)=I_{(S-1)(R+1)}.
\end{equation*}
To prove the statement of the theorem, we note that an inverse ensures us that the following equation has an unique solution:
\begin{equation*}
    (P_{S-1}\otimes Q_{R+1}-P_{S-1}^{\rho\sigma}\otimes Q_{R+1}^{\rho\sigma})\mathbf{x}=\mathbf{0},
\end{equation*}
where $\mathbf{x}$ is the vector consisting of the words of length $2$ corresponding to $Z$. But this precisely means that 
\begin{equation*}
    \bbracket{z_{s_1,r_1}z_{s_2,r_2}}\equiv 0\qquad\qquad\text{ mod } \MD
\end{equation*}
for all $s_1,s_2>1$ and $r_1,r_2\geq 0$ with $s_1+s_2=S$ and $r_1+r_2=R$.
This completes the proof.\qed\\

We are left to prove the following lemma.
\begin{lemma}
\label{binomialidentities}
For integers $n\geq1$ and $1\leq i,j\leq n$ we have
\begin{align}
    \sum_{k=1}^n(-1)^{k+j+n+1} {n-k \choose i-1}{j-1\choose n-k}&=\begin{cases}1 & \qquad\text{ if }i=j\label{binomials1}\\
    0&\qquad\text{ if }i\neq j\end{cases}\\
    \sum_{k=1}^n (-1)^{k+i+n+1}{k-1 \choose n-i}{n-j\choose k-1}&=\begin{cases}1 & \qquad\text{ if }i=j\label{binomials2}\\
    0&\qquad\text{ if }i\neq j\end{cases}\\
    \sum_{k=1}^n (-1)^{k+j+n+1}{n-k \choose i-1}{n-j\choose k-1}&=(-1)^{j+n}{j-1\choose n-i}\label{binomials3}\\
    \sum_{k=1}^n (-1)^{k+i+n+1}{j-1\choose n-k}{k-1 \choose n-i}&=(-1)^{i+1}{n-j\choose i-1}.\label{binomials4}
\end{align}
\end{lemma}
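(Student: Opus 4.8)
The plan is to prove all four identities by one and the same generating-function (coefficient-extraction) device: in each alternating sum one of the two binomial factors is written as $\binom{a}{b}=[x^{b}](1+x)^{a}$, the coefficient extraction is pulled outside the sum over $k$, and the remaining finite sum is collapsed by the binomial theorem into a closed form, after which a single coefficient is read off and simplified by the symmetry $\binom{N}{a}=\binom{N}{N-a}$.

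Concretely, for (\ref{binomials1}) I would first substitute $m=n-k$, which after collecting signs turns the left-hand side into $(-1)^{j+1}\sum_{m}(-1)^{m}\binom{j-1}{m}\binom{m}{i-1}$. Writing $\binom{m}{i-1}=[x^{i-1}](1+x)^{m}$ and applying the binomial theorem to the (finite) sum gives $\sum_{m}\binom{j-1}{m}(-(1+x))^{m}=(1-(1+x))^{j-1}=(-x)^{j-1}$, so the whole expression is $(-1)^{j+1}(-1)^{j-1}[x^{i-1}]x^{j-1}=\delta_{ij}$. Identity (\ref{binomials3}) is treated in the same spirit after substituting $m=k-1$: writing $\binom{n-1-m}{i-1}=[x^{i-1}](1+x)^{n-1-m}$, factoring out $(1+x)^{n-1}$ and summing $\sum_{m}\binom{n-j}{m}\bigl(-(1+x)^{-1}\bigr)^{m}=\bigl(x/(1+x)\bigr)^{n-j}$ leaves $[x^{i-1}]x^{n-j}(1+x)^{j-1}=\binom{j-1}{i+j-n-1}$, which equals $\binom{j-1}{n-i}$ by binomial symmetry; together with the leftover sign $(-1)^{j+n}$ this is exactly the claimed right-hand side.

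For the remaining two identities I would proceed either by repeating the identical computation—now expanding the factor $\binom{m}{n-i}$ (resp.\ $\binom{n-1-m}{n-i}$) as a coefficient of a power of $(1+x)$—or, more structurally, by invoking the matrix language of the main proof. There (\ref{binomials1}) asserts $P_{n}Q_{n}=I$ whereas (\ref{binomials2}) asserts $Q_{n}P_{n}=I$; since $P_{n}$ and $Q_{n}$ are square, the second follows automatically from the first, and the analogous pairing deduces (\ref{binomials4}) from (\ref{binomials3}). I would likely present (\ref{binomials2}) and (\ref{binomials4}) by the direct coefficient computation for uniformity, and remark on the square-matrix shortcut as a check.

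The only genuine care required is sign bookkeeping: tracking the parity of the exponent of $-1$ across each index shift ($m=n-k$ or $m=k-1$) and verifying that the coefficient actually extracted matches the stated binomial after applying $\binom{N}{a}=\binom{N}{N-a}$. I do not expect a real obstacle, since each identity reduces to a single application of the binomial theorem to a finite sum; the main effort is simply to carry out the four short computations without sign errors.
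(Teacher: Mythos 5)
Your proposal is correct, and it takes a genuinely different route from the paper. The paper proves all four identities by induction on $n$, using Pascal-type recurrences on the binomial coefficients and handling the base cases $n=1$, $i=1$, $j=1$ separately for each identity; this is elementary but requires a fair amount of case analysis and sign bookkeeping at every inductive step. Your coefficient-extraction argument replaces each induction by a single application of the binomial theorem: I checked the two computations you carried out, and the signs work out exactly as you claim --- for (\ref{binomials1}) the substitution $m=n-k$ gives $(-1)^{j+1}\sum_m(-1)^m\binom{j-1}{m}\binom{m}{i-1}=(-1)^{j+1}[x^{i-1}](-x)^{j-1}=\delta_{ij}$, and for (\ref{binomials3}) the factor $[x^{i-1}]x^{n-j}(1+x)^{j-1}=\binom{j-1}{i+j-n-1}=\binom{j-1}{n-i}$ is right (the finite sums make all the formal manipulations with $(1+x)^{-1}$ harmless). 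The analogous computations for (\ref{binomials2}) and (\ref{binomials4}) go through identically, so the uniform treatment you prefer is sound. One small caution on the structural shortcuts: deducing (\ref{binomials2}) from (\ref{binomials1}) via $P_nQ_n=I\Rightarrow Q_nP_n=I$ for square matrices is immediate, but deducing (\ref{binomials4}) from (\ref{binomials3}) is not a bare transposition --- it additionally uses $PQ=QP=I$ and the fact that the $180\degree$ rotation $A\mapsto A^{\rho\sigma}$ is multiplicative, so if you state that remark you should say so; since you default to the direct computation for those two identities anyway, this does not affect the validity of the proof. On balance your approach is shorter and more transparent than the paper's induction, at the modest cost of invoking generating functions.
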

\begin{proof}
We prove all four equations by induction.\\
(\ref{binomials1}): The case $n=1$ is clear. If $i=1$ we have 
\begin{equation*}
    \sum_{k=1}^{n+1}(-1)^{k+n+j}{n+1-k\choose i-1}{j-1 \choose n+1-k}=\sum_{k=1}^{n+1}(-1)^{k+n+j}{j-1 \choose n+1-k}=\begin{cases}1&\text{ if } j=1\\0&\text{ if } j\neq 1\end{cases},
\end{equation*}
where the second equality follows from $\sum_{k=0}^n(-1)^k{n \choose k}$ equals $0$ if $n>0$ and $1$ otherwise. If $j=1$, we have
\begin{equation*}
    \sum_{k=1}^{n+1}(-1)^{k+n+j}{n+1-k\choose i-1}{j-1 \choose n+1-k}={0\choose i-1}=\begin{cases}1&\text{ if } i=1\\0&\text{ if } i\neq 1\end{cases}.
\end{equation*}
Now suppose that $i>1$. We have
\begin{align*}
    \sum_{k=1}^{n+1}(-1)^{k+n+j}{n+1-k\choose i-1}{j-1 \choose n+1-k}&=\sum_{k=1}^{n}(-1)^{k+n+j}{n+1-k\choose i-1}{j-1 \choose n+1-k}\\
    &\quad+(-1)^{j+1}{0 \choose i-1}{j-1\choose 0}\\
    &=0+\frac{j-1}{i-1}\cdot\sum_{k=1}^n(-1)^{k+n+(j-1)+1}{n-k\choose i-2}{j-2\choose n-k}\\
    &=\begin{cases}1&\text{ if } i=j\\0&\text{ if } i\neq j\end{cases}.
\end{align*}
This shows (\ref{binomials1}).\\
(\ref{binomials2}): The case $n=1$ is clear. First suppose that $i=1$. Then, the required identity can be proven by the same argument as used for (\ref{binomials1}) with $i=1$. Now, if $j=1$ and $i>1$, we get
\begin{align*}
    &(-1)^{k+i+n}{k-1 \choose n+1-i}{n\choose k-1}\\
    &\qquad=(-1)^{k+i+n}\left[{k-1 \choose n+1-i}{n-1 \choose k-1}+{k-2 \choose n+1-i}{n-1 \choose k-2}+{k-2 \choose n-i}{n-1 \choose k-2}\right].
\end{align*}
Then, after the case distinction $i=n+1$ and $i<n+1$, we find that we must have
\begin{equation*}
    \sum_{k=1}^{n+1}(-1)^{k+i+n}{k-1 \choose n+1-i}{n\choose k-1}=0.
\end{equation*}

Finally, for $i,j>1$ we obtain
\begin{align*}
    \sum_{k=1}^{n+1} (-1)^{k+(i-1)+n+1}{k-1 \choose n+1-i}{n+1-j\choose k-1}&=0+\sum_{k=1}^{n} (-1)^{k+i+n}{k-1 \choose n-(i-1)}{n-(j-1)\choose k-1}\\
    &=\begin{cases}1&\text{ if } i=j\\0&\text{ if } i\neq j\end{cases}.
\end{align*}
This shows (\ref{binomials2}).\\
(\ref{binomials3}): If $n=1$, $i=1$ or $j=1$, one can easily check see that the equation holds. If $i,j>1$, we have
\begin{align*}
    \sum_{k=1}^{n+1} (-1)^{k+j+n}{n+1-k \choose i-1}{n+1-j\choose k-1}&=0+\sum_{k=1}^{n} (-1)^{k+j+n}{n+1-k \choose i-1}{n+1-j\choose k-1}\\
    &\negphantom{hellohello}=\sum_{k=1}^{n} (-1)^{k+j+n}\left[{n-k \choose i-1}{n+1-j\choose k-1}+{n-k \choose i-2}{n+1-j\choose k-1}\right]\\
    &\negphantom{hellohello}=(-1)^{j-1+n}\left[{j-2\choose n-i}+{j-2\choose n-i-1}\right]\\
    &\negphantom{hellohello}=(-1)^{j+n+1}{j-1\choose n-i}.
\end{align*}
This shows (\ref{binomials3}).\\
(\ref{binomials4}): First we apply the transformation $k'=n+1-k$, so that we have to show
\begin{equation*}
    \sum_{k'=1}^n(-1)^{k'+i}{j-1 \choose k'-1}{n-k'\choose i-k'}=(-1)^{i+1}{n-j\choose i-1}.
\end{equation*}
The cases where $n=1$, $i=1$ or $i=n$ are easily deducible. So we may assume that $1<i<n+1$. We have
\begin{align*}
    \sum_{k'=1}^{n+1}(-1)^{k'+i}{j-1 \choose k'-1}{n+1-k'\choose i-k'}&=0\!+\!\sum_{k=1}^n(-1)^{k+i}\left[{j-1 \choose k'-1}{n-k'\choose i-k'}\!+\!{j-1 \choose k'-1}{n-k'\choose i-1-k'}\right]\\
    &=(-1)^{i+1}{n-j\choose i-1}-(-1)^{i}{n-j\choose i-2}\\
    &=(-1)^{i+1}{n+1-j\choose i-1}.
\end{align*}
This shows (\ref{binomials4}) and finishes the proof of the lemma.
\end{proof}
\noindent\textbf{Remark. } In the previous lemma, identities (\ref{binomials3}) and (\ref{binomials4}) satisfy a nice duality. Namely, if we apply the (unusual) transformation ${a\choose b}\mapsto{b\choose a}$, identity (\ref{binomials3}) becomes (\ref{binomials4}), up to a sign.\\

This proof of Theorem \ref{oddweightl=2} followed from the natural approach of directly studying the relations generated by the stuffle and shuffle products. Theorem \ref{oddweightl=2} can also be proved in a less explicit way. This proof was found at a later stage, and only after a suggestion by H. Bachmann.\\

For integers $S\geq l$ and $R\geq 0$ we define the generating functions
\begin{align*}
    \sgen{X_1,X_2\\Y_1,Y_2}&=\sum_{\substack{s_1,s_2>0\\ s_1+s_2=S\\r_1,r_2\geq 0\\r_1+r_2=R}}\bbracket{s_1,s_2\\r_1,r_2}X_1^{s_1-1} X_2^{s_2-1}Y_1^{r_1} Y_2^{r_2},\\
    \pgen{X_1,X_2\\Y_1,Y_2}&=\sum_{\substack{s_1,s_2>0\\ s_1+s_2=S\\r_1,r_2\geq 0\\r_1+r_2=R}}\bbracket{s_1\\r_1}\bbracket{s_2\\r_2}X_1^{s_1-1}X_2^{s_2-1}Y_1^{r_1} Y_2^{r_2}.
\end{align*}
% \begin{align*}
%     \sgen{X_1,\dots,X_l\\Y_1,\dots,Y_l}&=\sum_{\substack{s_1,\dots,s_l>0\\ s_1+\dots+s_l=S\\r_1,\dots,r_l\geq 0\\r_1+\dots+r_l=R}}\bbracket{s_1,\dots,s_l\\r_1,\dots,r_l}X_1^{s_1-1}\cdots X_l^{s_l-1}Y_1^{r_1}\cdots Y_l^{r_l},\\
%     \pgen{X_1,X_2\\Y_1,Y_2}&=\sum_{\substack{s_1,s_2>0\\ s_1+s_2=S\\r_1,r_2\geq 0\\r_1+r_2=R}}\bbracket{s_1\\r_1}\bbracket{s_2\\r_2}X_1^{s_1-1}X_2^{s_2-1}Y_1^{r_1} Y_2^{r_2}.
% \end{align*}
These functions satisfy the following relations (modulo length $1$ bi-brackets)
\begin{align*}
    \pgen{X_1,X_2\\Y_1,Y_2}&\equiv\sgen{X_1,X_2\\Y_1,Y_2}+\sgen{X_2,X_1\\Y_2,Y_1}\\
    \pgen{X_1,X_2\\Y_1,Y_2}&\equiv\sgen{X_1+X_2,X_1\\Y_2,Y_1-Y_2}+\sgen{X_1+X_2,X_2\\Y_1,Y_2-Y_1}.
\end{align*}
Now let us assume that $S+R$ is odd, so that we also have
\begin{equation*}
    \sgen{-X_1,-X_2\\-Y_1,-Y_2}=-\sgen{X_1,X_2\\Y_1,Y_2},\qquad\qquad\qquad\pgen{-X_1,-X_2\\-Y_1,-Y_2}=-\pgen{X_1,X_2\\Y_1,Y_2}.
\end{equation*}
After repeated use of these properties, we obtain the following relation
\begin{align}
\label{eq:srgeneratorsl=2}
    \sgen{X_1,X_2\\Y_1,Y_2}&=\frac{1}{2}\left(\pgen{X_1,X_2\\Y_1,Y_2}-\pgen{-X_1,X_2\\-Y_1,Y_2}+\pgen{X_2-X_1,X_2\\-Y_1,Y_1+Y_2}-\pgen{X_1-X_2,X_1\\-Y_2,Y_1+Y_2}\right.
    \\&\left.\qquad+\pgen{X_1-X_2,X_2\\Y_1,Y_1+Y_2}-\pgen{X_2-X_1,X_1\\Y_2,Y_1+Y_2}\right).
\end{align}
Then Theorem \ref{oddweightl=2} follows from Theorem \ref{bdisinmdl=1}, and by considering the coefficients at $X_1^{s_1-1}X_2^{s_2-1}Y_1^{r_1}\cdots Y_2^{r_2}$.

\section{Discussion}
Theorem \ref{oddweightl=2} creates some questions. For example, what happens to the relations we obtain between bi-brackets, if we let $q\to 1^-$. We could also wonder what happens for even total weight. Finally, what happens when we consider a length $l>2$?

\subsection{The parity relation for multiple zeta values}
\label{subsection:paritymzv}
Let us consider the case $S=5$ and $R=0$. We have the equations
\begin{alignat*}{5}
    \bbracket{4}\bbracket{1}&=\hphantom{2}\bbracket{4,1}+&&&&&&\bbracket{1,4}&&\qquad+\bbracket{5}-\frac{1}{2}\bbracket{4}+\frac{1}{12}\bbracket{3},\\
    \bbracket{3}\bbracket{2}&=&&\hphantom{3}\bbracket{3,2}+&&\bbracket{2,3}&&&&\qquad+\bbracket{5}-\frac{1}{12}\bbracket{3},\\
    \bbracket{4}\bbracket{1}&= 2\bbracket{4,1}+&&\hphantom{3}\bbracket{3,2}+&&\bbracket{3,2}+&&\bbracket{1,4}&&\qquad+\bbracket{4\\1}-\bbracket{4},\\
    \bbracket{3}\bbracket{2}&= 6\bbracket{4,1}+&&3\bbracket{3,2}+&&\bbracket{3,2}&&&&\qquad+3\bbracket{4\\1}-3\bbracket{4}.
\end{alignat*}
This system can be solved (directly, or with the use of the proof of Theorem \ref{oddweightl=2}) for the brackets $\bbracket{4,1},\dots,\bbracket{1,4}$. We obtain the identities
\begin{align*}
    \bbracket{4,1}&=-\bbracket{3}\bbracket{2}+2\bbracket{5}+\frac{1}{2}\bbracket{4}-\bbracket{4\\1},\\
    \bbracket{3,2}&=3\bbracket{3}\bbracket{2}-\frac{11}{2}\bbracket{5}-\frac{1}{24}\bbracket{3}+\frac{3}{2}\bbracket{4\\1},\\
    \bbracket{2,3}&=-2\bbracket{3}\bbracket{2}+\frac{9}{2}\bbracket{5}+\frac{1}{8}\bbracket{3}-\frac{3}{2}\bbracket{4\\1},\\
    \bbracket{1,4}&=\bbracket{1}\bbracket{4}+\bbracket{2}\bbracket{3}-3\bbracket{5}-\frac{1}{12}\bbracket{3}+\bbracket{4\\1}.
\end{align*}
Since we have
\begin{equation*}
    \lim_{q\to1^-}(1-q)^N\bbracket{s_1,\dots,s_l\\r_1,\dots,r_l}=0
\end{equation*}
whenever $s_1+\dots+s_l<N$, all brackets of upper weight less than $N=5$ vanish when we consider the limit $\lim_{q\to 1^-}(1-q)^5\;\cdots\;$ of the first three equations. We obtain the following identities between multiple zeta values:
\begin{align*}
    \zeta(4,1)=-\zeta(3)\zeta(2)+2\zeta(5),  \qquad  \zeta(3,2)= 3\zeta(3)\zeta(2)-\frac{11}{2}\zeta(5),\qquad    \zeta(2,3)= -2\zeta(3)\zeta(2)+\frac{9}{2}\zeta(5).
\end{align*}

These identities between multiple zeta values are known as \textit{parity} relations. They are characterised by a single MZV of length $l$ on the left-hand side, and a rational linear combination of MZVs of length smaller than $l$ on the right-hand side. The existence of these relations depends on the parity of the length and the total weight of a multiple zeta value. It is a classical result by Tsumura \cite[Cor. 8, p. 333]{parity} and Ihara, Kaneko \& Zagier \cite{parity2}.

\begin{theorem}[Parity for multiple zeta values]
\label{paritymzv}
Let $\s=(s_1,\dots,s_l)$ be an admissible index. If $s_1+\dots+s_l+l$ is odd, then $\zeta(\s)$ can be written as a $\Q$-linear combination of multiple zeta values of length smaller than $l$. 
\end{theorem}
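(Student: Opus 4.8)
The plan is to read off the parity relation from the double-shuffle structure of the MZVs, handling the depth-two case directly from Theorem \ref{oddweightl=2} and then treating arbitrary depth by the same double-shuffle philosophy. For $l=2$ the result is essentially already proved: when $s_1+s_2$ is odd, specialise Theorem \ref{oddweightl=2} to $r_1=r_2=0$, so that the mono-bracket $\bbracket{s_1,s_2}$ is a $\Q$-linear combination of products $\bbracket{a}\bbracket{b}$ with $a+b=s_1+s_2$ together with mono- and bi-brackets of strictly smaller upper weight (this is exactly the system displayed for $S=5,\,R=0$). Now apply $\lim_{q\to1^-}(1-q)^{s_1+s_2}$: every term of upper weight below $s_1+s_2$ is annihilated, each surviving product $\bbracket{a}\bbracket{b}$ becomes $\zeta(a)\zeta(b)$ and the mono-bracket $\bbracket{s_1+s_2}$ becomes $\zeta(s_1+s_2)$, while the left-hand side becomes $\zeta(s_1,s_2)$. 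This exhibits $\zeta(s_1,s_2)$ as a $\Q$-linear combination of products of depth-one MZVs, which is the parity relation in depth two.

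For general $l$ I would work directly with the two multiplicative laws of Section \ref{chapter:MZV}. Fix the weight $w=s_1+\dots+s_l$ with $w+l$ odd. For products $\zeta(\mathbf{a})\cdot\zeta(\mathbf{b})$ whose depths add up to $l$, each factor having depth below $l$, the shuffle relation (Theorem \ref{homommzvshuffle}) expands the product into depth-$l$ MZVs of weight $w$ only, whereas the stuffle relation (Theorem \ref{homommzvstuffle}) expands it into depth-$l$ MZVs plus terms of strictly smaller depth. Subtracting the two expansions cancels the product and leaves a $\Q$-linear relation among the depth-$l$ weight-$w$ MZVs, modulo MZVs of depth $<l$. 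Assembling these relations over all admissible factorisations gives a linear system whose unknowns are the depth-$l$ weight-$w$ values; solving it expresses each of them through MZVs of smaller depth. The role of the hypothesis is the parity sign: the generating series of the depth-$l$ weight-$w$ MZVs is antisymmetric under the simultaneous sign change $X_i\mapsto -X_i$ of all formal variables precisely because $\sum_i(s_i-1)=w-l$ is odd, mirroring the identity $\sgen{-X_1,-X_2\\-Y_1,-Y_2}=-\sgen{X_1,X_2\\Y_1,Y_2}$ used in \eqref{eq:srgeneratorsl=2}, and it is this antisymmetry that makes the system consistent and solvable.

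The main obstacle is proving that this linear system actually has full rank, i.e.\ that the double-shuffle relations pin down every depth-$l$ MZV modulo lower depth when $w+l$ is odd. In depth two this reduced to the concrete invertibility of $P_{S-1}\otimes Q_{R+1}-P_{S-1}^{\rho\sigma}\otimes Q_{R+1}^{\rho\sigma}$, which the Kronecker-product factorisation together with the binomial identities of Lemma \ref{binomialidentities} settles by hand. For $l>2$ no such tidy factorisation is available, and one must argue abstractly instead: either by the generating-function manipulation of the stuffle and shuffle identities in the style of Ihara--Kaneko--Zagier, carefully tracking the parity of the binomial coefficients that appear, or by the integral representation and its reflection symmetry as in Tsumura's argument. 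Throughout, the delicate point is to guarantee that the remainder genuinely consists of values of depth strictly below $l$, so that the resulting expression really lowers the depth.
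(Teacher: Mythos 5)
Your depth-two argument coincides with the paper's own treatment: the paper does not prove Theorem \ref{paritymzv} for general $l$ at all, but cites Tsumura and Ihara--Kaneko--Zagier, and only derives the case $l=2$ from the \emph{proof} of Theorem \ref{oddweightl=2}, by working modulo brackets of upper weight $<s_1+s_2$ and then applying $\lim_{q\to1^-}(1-q)^{s_1+s_2}$ --- exactly as you do. One small point to watch there: a product $(1-q)^{S}\bbracket{1}\bbracket{S-1}$ does \emph{not} tend to $\zeta(1)\zeta(S-1)$ (it diverges logarithmically), so you must check that no such product survives in the expression for an \emph{admissible} $\bbracket{s_1,s_2}$; in the paper's $S=5$, $R=0$ example the term $\bbracket{1}\bbracket{4}$ indeed only occurs in the equation for the non-admissible $\bbracket{1,4}$, and the limit is taken only of the first three equations.

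For general $l$ your proposal is a programme rather than a proof, and you name the missing step yourself (full rank of the double-shuffle system). But there is a concrete defect in the system as you set it up: restricting to products $\zeta(\mathbf{a})\zeta(\mathbf{b})$ with \emph{both factors admissible} yields far too few relations. Already in depth $2$ and weight $5$ there are three unknowns $\zeta(4,1)$, $\zeta(3,2)$, $\zeta(2,3)$ but only one admissible product $\zeta(2)\zeta(3)$, hence a single relation ``stuffle $=$ shuffle''; the reduction of all three values relies essentially on the relations generated by $\bbracket{1}\bbracket{4}$ in the bracket world, which have no admissible MZV counterpart because $\zeta(1)$ diverges. This is precisely why the thesis's argument stays with $q$-brackets, where the letters $z_{1,r}$ are legitimate and the matrix in the proof of Theorem \ref{oddweightl=2} has full size $(S-1)(R+1)$ and full rank. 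To make your general-$l$ strategy work on the MZV side one must pass to the \emph{regularised} double shuffle relations as in Ihara--Kaneko--Zagier (or remain in the bracket setting and prove the higher-length analogue of Theorem \ref{oddweightl=2}, which the paper only formulates as Suspicions \ref{suspicion:stsh1} and \ref{suspicion:stsh2}); the antisymmetry under $X_i\mapsto -X_i$ is the correct heuristic for why odd $w+l$ is the right parity, but it does not by itself produce the required invertibility.
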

Similarly to what we did in the example above, we see that the case $l=2$ of Theorem \ref{paritymzv} follows from the proof of Theorem \ref{oddweightl=2} by considering the system
\begin{align*}
    \bbracket{z_{s_1,r_1}}\bbracket{z_{s_2,r_2}}&-{r_1+r_2\choose r_1}\bbracket{z_{s_1+s_2,r_1+r_2}}\equiv \bbracket{z_{s_1,r_1}z_{s_2,r_2}}+\bbracket{z_{s_2,r_2}z_{s_1,r_1}},\\
    \bbracket{z_{s_1,r_1}}\bbracket{z_{s_2,r_2}}&\equiv\sum_{\substack{1\leq l\leq s_1\\ 0\leq k\leq r_2}}{s_1+s_2-1-l \choose s_2-1}{r_1+r_2-k \choose r_1}(-1)^{r_2-k}\bbracket{z_{s_1+s_2-l,k}z_{l,r_1+r_2-k}}\\
    &\quad+\sum_{\substack{1\leq l\leq s_2\\ 0\leq k\leq r_1}}{s_1+s_2-1-l \choose s_1-1}{r_1+r_2-k \choose r_2}(-1)^{r_1-k}\bbracket{z_{s_1+s_2-l,k}z_{l,r_1+r_2-k}},
\end{align*}
which is the usual system generated by the stuffle and shuffle products modulo brackets of upper weight $<s_1+s_2$.\\

This means that we can interpret the solubility of the system generated by the stuffle and shuffle product, as the parity relation of length $2$ for bi-brackets. More on this in Section \ref{subsection:lengthl>2}.

\subsection{Even total weight}
Let us consider the system of equations
\begin{align}
\label{eq:evensystem}
    0\equiv \bbracket{z_{s_1,r_1}\astt z_{s_2,r_2}},\qquad\qquad   0\equiv \bbracket{z_{s_1,r_1}\shuf z_{s_2,r_2}},
\end{align}
for letters $z_{s_1,r_1}$ and $z_{s_2,r_2}$ with fixed upper weight $s_1+s_2=S$ and lower weight $r_1+r_2=R$. We saw in the example where $S=3$ and $R=1$ that this system cannot be solved for the individual brackets $\bbracket{z_{s_1,r_1}z_{s_2,r_2}}$. A computer calculation suggests that this is (almost) always the case for even total weight $S+R$. This means that the stuffle and shuffle product between length $1$ brackets do generate enough independent relations. Let $U(S,R)$ be the matrix corresponding to System (\ref{eq:evensystem}). This system consists of $(S-1)(R+1)$ equations, so that the quantity 
\begin{equation*}
    \lambda(S,R)\coloneqq(S-1)(R+1)-\rank(U(S,R))
\end{equation*}
encodes the deficiency of relations of System (\ref{eq:evensystem}) for it to be solvable. These values are given in Table \ref{table:dimkermetnullenU}.
\begin{table}[H]
\begin{tabular}{l|rrrrrrrrrrrrrrrrrrrrr}
S\textbackslash{}R & 0 & 1 & 2 & 3 & 4 & 5  & 6  & 7  & 8  & 9  & 10 & 11 & 12 & 13 & 14 & 15 & 16  & 17 & 18 & 19      \\ \hline
2                  & 0 & 0 & 0 & 0 & 0  & 0  & 1  & 0  & 1  & 0  & 1  & 0  & 2  & 0  & 2  & 0  & 2  & 0  & 3  & 0       \\
3                  & 0 & 1 & 0 & 1 & 0  & 2  & 0  & 3  & 0  & 3  & 0  & 4  & 0  & 5  & 0  & 5  & 0  & 6  & 0  & 7       \\
4                  & 0 & 0 & 1 & 0 & 2  & 0  & 3  & 0  & 4  & 0  & 5  & 0  & 6  & 0  & 7  & 0  & 8  & 0  & 9  & 0       \\
5                  & 0 & 1 & 0 & 3 & 0  & 4  & 0  & 5  & 0  & 7  & 0  & 8  & 0  & 9  & 0  & 11 & 0  & 12 & 0  & 13      \\
6                  & 0 & 0 & 2 & 0 & 4  & 0  & 5  & 0  & 7  & 0  & 9  & 0  & 10 & 0  & 12 & 0  & 14 & 0  & 15 & 0       \\
7                  & 0 & 2 & 0 & 4 & 0  & 6  & 0  & 8  & 0  & 10 & 0  & 12 & 0  & 14 & 0  & 16 & 0  & 18 & 0  & 20      \\
8                  & 1 & 0 & 3 & 0 & 5  & 0  & 8  & 0  & 10 & 0  & 12 & 0  & 15 & 0  & 17 & 0  & 19 & 0  & 22 & 0       \\
9                  & 0 & 3 & 0 & 5 & 0  & 8  & 0  & 11 & 0  & 13 & 0  & 16 & 0  & 19 & 0  & 21 & 0  & 24 & 0  & 27      \\
10                 & 1 & 0 & 4 & 0 & 7  & 0  & 10 & 0  & 13 & 0  & 16 & 0  & 19 & 0  & 22 & 0  & 25 & 0  & 28 & 0       \\
11                 & 0 & 3 & 0 & 7 & 0  & 10 & 0  & 13 & 0  & 17 & 0  & 20 & 0  & 23 & 0  & 27 & 0  & 30 & 0  & 33      \\
12                 & 1 & 0 & 5 & 0 & 9  & 0  & 12 & 0  & 16 & 0  & 20 & 0  & 23 & 0  & 27 & 0  & 31 & 0  & 34 & 0       \\
13                 & 0 & 4 & 0 & 8 & 0  & 12 & 0  & 15 & 0  & 20 & 0  & 24 & 0  & 28 & 0  & 32 & 0  & 36 & 0  & 40      \\
14                 & 2 & 0 & 6 & 0 & 10 & 0  & 15 & 0  & 19 & 0  & 23 & 0  & 28 & 0  & 32 & 0  & 36 & 0  & 41 & 0       
\end{tabular}
\caption{$\lambda(S,R)$.}
\label{table:dimkermetnullenU}
\end{table}
% \clearpage
Note that $\lambda(S,R)=0$ for odd $S+R$. Table \ref{table:dimkerzondernullenU} shows the values for $\lambda(S,R)$ without the cases of odd total weight.
\setlength\tabcolsep{3.5pt}
\begin{table}[H]
\captionsetup{justification=centering,margin=1cm}
\begin{tabular}{l|rrrrrrrrrrrrrrrrrrrrrrrrr}
S=2  & 0& 0& 0& 1 & 1 & 1 & 2 & 2 & 2 & 3 & 3& 3& 4& 4& 4  & 5  & 5  & 5  & 6  & 6  & 6  & 7  & 7  & 7  & 8\\
3  & 1& 1& 2& 3 & 3 & 4 & 5 & 5 & 6 & 7 & 7& 8& 9& 9& 10 & 11 & 11 & 12 & 13 & 13 & 14 & 15 & 15 & 16 & 17 \\
4  & 0& 1& 2& 3 & 4 & 5 & 6 & 7 & 8 & 9 & 10 & 11 & 12 & 13 & 14 & 15 & 16 & 17 & 18 & 19 & 20 & 21 & 22 & 23 & 24 \\
5  & 1& 3& 4& 5 & 7 & 8 & 9 & 11 & 12 & 13 & 15 & 16 & 17 & 19 & 20 & 21 & 23 & 24 & 25 & 27 & 28 & 29 & 31 & 32 & 33 \\
6  & 0& 2& 4& 5 & 7 & 9 & 10 & 12 & 14 & 15 & 17 & 19 & 20 & 22 & 24 & 25 & 27 & 29 & 30 & 32 & 34 & 35 & 37 & 39 & 40 \\
7  & 2& 4& 6& 8 & 10 & 12 & 14 & 16 & 18 & 20 & 22 & 24 & 26 & 28 & 30 & 32 & 34 & 36 & 38 & 40 & 42 & 44 & 46 & 48 & 50 \\
8  & 1& 3& 5& 8 & 10 & 12 & 15 & 17 & 19 & 22 & 24 & 26 & 29 & 31 & 33 & 36 & 38 & 40 & 43 & 45 & 47 & 50 & 52 & 54 & 57 \\
9  & 3& 5& 8& 11 & 13 & 16 & 19 & 21 & 24 & 27 & 29 & 32 & 35 & 37 & 40 & 43 & 45 & 48 & 51 & 53 & 56 & 59 & 61 & 64 & 67 \\
10 & 1& 4& 7& 10 & 13 & 16 & 19 & 22 & 25 & 28 & 31 & 34 & 37 & 40 & 43 & 46 & 49 & 52 & 55 & 58 & 61 & 64 & 67 & 70 & 73 \\
11 & 3& 7& 10& 13 & 17 & 20 & 23 & 27 & 30 & 33 & 37 & 40 & 43 & 47 & 50 & 53 & 57 & 60 & 63 & 67 & 70 & 73 & 77 & 80 & 83 \\
12 & 1& 5& 9& 12 & 16 & 20 & 23 & 27 & 31 & 34 & 38 & 42 & 45 & 49 & 53 & 56 & 60 & 64 & 67 & 71 & 75 & 78 & 82 & 86 & 89 \\
13 & 4& 8& 12& 16 & 20 & 24 & 28 & 32 & 36 & 40 & 44 & 48 & 52 & 56 & 60 & 64 & 68 & 72 & 76 & 80 & 84 & 88 & 92 & 96 & 100 \\
14 & 2& 6& 10& 15 & 19 & 23 & 28 & 32 & 36 & 41 & 45 & 49 & 54 & 58 & 62 & 67 & 71 & 75 & 80 & 84 & 88 & 93 & 97 & 101 & 106 
\end{tabular}
\caption{$\lambda(S,R)$ for $S+R$ odd. The horizontal values are increases of $R$ by $2$, starting at $R=0$ or $R=1$, if $S$ is even or odd, respectively.}
\label{table:dimkerzondernullenU}
\end{table}
\subsection{Length $l>2$ and the parity relation for bi-brackets}
\label{subsection:lengthl>2}
We could try to reproduce the methods used in the case $l=2$, but applied to words of length $l>2$. For example, let us look at the smallest case: $l=3$. Fix $S\geq 3$ and $R\geq 0$. We consider all products 
\begin{align*}
    \bbracket{z_{s_1,r_1}z_{s_2,r_2}}\bbracket{z_{s_3,r_3}}&=\bbracket{z_{s_1,r_1}z_{s_2,r_2}\ast z_{s_3,r_3}},\\
    \bbracket{z_{s_1,r_1}z_{s_2,r_2}}\bbracket{z_{s_3,r_3}}&=\bbracket{z_{s_1,r_1}z_{s_2,r_2}\shuffle z_{s_3,r_3}},
\end{align*}
 with $s_1+s_2+s_3=S$ and $r_1+r_2+r_3=R$. However, we do not know whether $\bbracket{z_{s_1,r_1}z_{s_2,r_2}}\equiv 0$ mod $\MD$ for arbitrary $s_1,s_2,r_1,r_2$ (we only know this for $S+R$ odd). Let us first assume that this is indeed the case, so that we can ignore the involved terms of length $2$. We define the products $\astt$ and $\shuf$ to be $\ast$ and $\shuffle$, respectively, but without all terms of lower length, i.e., those of length $1$ and $2$.
\begin{lemma}
The partition relation for $l=3$ is given by
\begin{align*}
    &Z_{r_1}^{s_1}Z_{r_2}^{s_2}Z_{r_3}^{s_3}=\\
    &\sum_{j=0}^{s_3-1}\sum_{k=0}^{s_2+j-1}\sum_{n=0}^{r_1}\sum_{l=0}^{r_2+n}(-1)^{j+k}{s_2+j-k-1\choose j}{s_1+k-1\choose k}{r_3+l\choose l}{r_2+n\choose n}Z^{r_3+l+1}_{s_3-j-1}Z^{r_2+n-l+1}_{s_2+j-k-1}Z^{r_1-n+1}_{s_1+k-1},
\end{align*}
with the less spacious notation $Z_r^s= z_{s,r}$.
\end{lemma}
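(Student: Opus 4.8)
The plan is to derive the claimed identity by specializing the generating-function partition relation of Proposition \ref{bibracketalternatief}.$iii$ to length $l=3$ and extracting a single coefficient. In length $3$ that relation reads
$$\bgen{X_1,X_2,X_3\\Y_1,Y_2,Y_3}=\bgen{Y_1+Y_2+Y_3,Y_1+Y_2,Y_1\\X_3,X_2-X_3,X_1-X_2},$$
and by definition of the generating function the bi-bracket $\bbracket{s_1,s_2,s_3\\r_1,r_2,r_3}$ is the coefficient of $X_1^{s_1-1}X_2^{s_2-1}X_3^{s_3-1}Y_1^{r_1}Y_2^{r_2}Y_3^{r_3}$ on the left. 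First I would expand the right-hand side by definition as
$$\sum_{\substack{a_1,a_2,a_3>0\\b_1,b_2,b_3\geq0}}\bbracket{a_1,a_2,a_3\\b_1,b_2,b_3}(Y_1+Y_2+Y_3)^{a_1-1}(Y_1+Y_2)^{a_2-1}Y_1^{a_3-1}X_3^{b_1}(X_2-X_3)^{b_2}(X_1-X_2)^{b_3},$$
so that reading off the coefficient of the same monomial expresses $\bbracket{s_1,s_2,s_3\\r_1,r_2,r_3}$ as a linear combination of the brackets $\bbracket{a_1,a_2,a_3\\b_1,b_2,b_3}$. After renaming indices via $Z_r^s=z_{s,r}$ this is exactly the asserted partition relation (read, as in Corollary \ref{length1partition}, as an identity between the images under $\bbracket{\,\cdot\,}$).

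The crucial simplification is that $X_1,X_2,X_3$ and $Y_1,Y_2,Y_3$ are independent indeterminates, and the indices $b_i$ occur only in the $X$-factors while the $a_i$ occur only in the $Y$-factors; hence the sought coefficient factorizes into two independent coefficient extractions, each handled exactly as in the proof of Corollary \ref{length1partition}. For the $X$-part I would expand $(X_1-X_2)^{b_3}$ and $(X_2-X_3)^{b_2}$ by the binomial theorem, match the exponents of $X_1,X_2,X_3$ to $s_1-1,s_2-1,s_3-1$, and parametrize the solution set of the resulting constraints by the index $k$ (the power of $X_2$ drawn from $(X_1-X_2)^{b_3}$) and a further index $j$; this forces $b_3=s_1+k-1$, $b_2=s_2+j-k-1$, $b_1=s_3-j-1$ and, after the simplifications $\binom{s_1+k-1}{s_1-1}=\binom{s_1+k-1}{k}$ and $\binom{s_2+j-k-1}{s_2-1-k}=\binom{s_2+j-k-1}{j}$, produces the factor $(-1)^{j+k}\binom{s_1+k-1}{k}\binom{s_2+j-k-1}{j}$. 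The analogous expansion of $(Y_1+Y_2+Y_3)^{a_1-1}$ and $(Y_1+Y_2)^{a_2-1}$, parametrized by indices $l,n$, forces $a_1=r_3+l+1$, $a_2=r_2+n-l+1$, $a_3=r_1-n+1$ and yields $\binom{r_3+l}{l}\binom{r_2+n}{n}$. Multiplying the two factors and collecting the surviving word $z_{r_3+l+1,s_3-j-1}z_{r_2+n-l+1,s_2+j-k-1}z_{r_1-n+1,s_1+k-1}$ reproduces precisely the claimed sum.

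I expect the main obstacle to be the careful bookkeeping of the exponent matching: choosing the free parameters $j,k,l,n$ so that the two constraints $b_1+b_2+b_3=s_1+s_2+s_3-3$ and $a_1+a_2+a_3=r_1+r_2+r_3+3$ are absorbed cleanly, tracking the sign $(-1)^{j+k}$ through the two binomial expansions, and confirming that the stated ranges $0\le j\le s_3-1$, $0\le k\le s_2+j-1$, $0\le n\le r_1$, $0\le l\le r_2+n$ either coincide with the support of the coefficient or only append terms whose binomial factors vanish (for instance $\binom{s_2+j-k-1}{j}=0$ once $k>s_2-1$). None of these steps is deep, but there is genuine risk of index errors, so I would cross-check the whole extraction against the length-$2$ computation of Corollary \ref{length1partition}, where the identical method must reproduce the known formula.
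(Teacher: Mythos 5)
Your proposal is correct and follows exactly the route the paper intends: the paper's proof is the one-line remark that the relation ``is proven similar to the relations in Corollary \ref{length1partition}'', i.e.\ by specializing Proposition \ref{bibracketalternatief}.$iii$ to $l=3$ and extracting the coefficient of $X_1^{s_1-1}X_2^{s_2-1}X_3^{s_3-1}Y_1^{r_1}Y_2^{r_2}Y_3^{r_3}$. Your index bookkeeping ($b_3=s_1+k-1$, $b_2=s_2+j-k-1$, $b_1=s_3-j-1$; $a_1=r_3+l+1$, $a_2=r_2+n-l+1$, $a_3=r_1-n+1$) checks out and in fact supplies more detail than the paper does.
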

\proof The relation is proven similar to the relations in Corollary \ref{length1partition}.\qed \\
We also have
\begin{equation*}
    z_{s_1,r_1}z_{s_2,r_2}\diam z_{s_3,r_3}=z_{s_1,r_1}z_{s_2,r_2}z_{s_3,r_3}+z_{s_1,r_1}z_{s_3,r_3}z_{s_2,r_2}+z_{s_3,r_3}z_{s_1,r_1}z_{s_2,r_2}.
\end{equation*}
Clearly, calculating $z_{s_1,r_1}z_{s_2,r_2}\shuf z_{s_3,r_3}=P\big(P(z_{s_1,r_1}z_{s_2,r_2})\astt P(z_{s_3,r_3})\big)$ is impractical and should be avoided. However, we can look at the system given by
\begin{align}
\label{systeml=3}
    0&\equiv \bbracket{P(z_{s_1,r_1}z_{s_2,r_2})\astt P(z_{s_3,r_3})}\nonumber\\
    0&\equiv\bbracket{P(z_{s_1,r_1}z_{s_2,r_2}\astt z_{s_3,r_3})}.
\end{align}
This system gives the same relations as the usual system:
\begin{align*}
    0&\equiv \bbracket{P\big(P(z_{s_1,r_1}z_{s_2,r_2})\astt P(z_{s_3,r_3})\big)}\\
    0&\equiv \bbracket{z_{s_1,r_1}z_{s_2,r_2}\astt z_{s_3,r_3}}.
\end{align*}
A computer check suggests (for $S,R\leq 10$) that system (\ref{systeml=3}) has a solution whenever $S+R$ is even. This in contrary to the case $l=2$, where the system has a solution if $S+R$ is odd. This leaves us wondering if this pattern repeats as $l$ increases. Theorem \ref{oddweightl=2} and the limited data for $l=3$ let us suspect the following.
\begin{suspicion}
\label{suspicion:stsh1}
Fix some length $l\geq2$ and integers $S\geq l$ and $R\geq0$. Assume that $S+R+l$ is odd. Also assume that $\bbracket{z_{s_1,r_1}\cdots z_{s_t,r_t}}\equiv 0$ mod $\MD$ for all $2\leq t\leq l-1$. Then the system 
\begin{align*}
    0&\equiv \bbracket{z_{s_1,r_1}\cdots z_{s_{t-1},r_{t-1}}\astt z_{s_t,r_t}\cdots z_{s_l,r_l}}\\
    0&\equiv \bbracket{z_{s_1,r_1}\cdots z_{s_{t-1},r_{t-1}}\shuf z_{s_t,r_t}\cdots z_{s_l,r_l}},
\end{align*}
given by all words $z_{s_1,r_1}\cdots z_{s_{t-1},r_{t-1}}$ and $z_{s_t,r_t}\cdots z_{s_l,r_l}$, which satisfy $s_1+\dots+s_l=S$ and $r_1+\dots+r_l=R$, has a solution.
\end{suspicion}
Or perhaps more specifically:
\begin{suspicion}
\label{suspicion:stsh2}
Fix some length $l\geq2$ and integers $S\geq l$ and $R\geq0$. Assume that $S+R+l$ is odd. Also assume that $\bbracket{z_{s_1,r_1}\cdots z_{s_{l-1},r_{l-1}}}\equiv 0$ mod $\MD$. Then the system 
\begin{align*}
    0&\equiv \bbracket{z_{s_1,r_1}\cdots z_{s_{l-1},r_{l-1}}\astt z_{s_l,r_l}}\\
    0&\equiv \bbracket{z_{s_1,r_1}\cdots z_{s_{l-1},r_{l-1}}\shuf z_{s_l,r_l}},
\end{align*}
given by all words $z_{s_1,r_1}\cdots z_{s_{l-1},r_{l-1}}$ and $z_{s_l,r_l}$, which satisfy $s_1+\dots+s_l=S$ and $r_1+\dots+r_l=R$, has a solution.
\end{suspicion}
Now let us drop the assumption that $\bbracket{z_{s_1,r_1}z_{s_2,r_2}}\equiv 0$ for \textit{all} words $z_{s_1,r_1}z_{s_2,r_2}$. This means that in general, we do not have the relation
\begin{equation*}
    \bbracket{z_{s_1,r_1}z_{s_2,r_2}\astt z_{s_3,r_3}}=\bbracket{z_{s_1,r_1}z_{s_2,r_2}}\cdot\bbracket{z_{s_3,r_3}}\equiv 0\qquad\qquad\text{ mod }\MD.
\end{equation*}
We know for words with odd total weight (Theorem \ref{oddweightl=2}) this relation holds. This means we could consider the system of equations where we only consider words $z_{s_1,r_1}z_{s_2,r_2}$ with $s_1+s_2+r_1+r_2$ odd. However, a computer check ($S,R\leq10$) suggests that this restricted system in general has no solution (with the exception of small $S,R$).\\

Another way to solve the problem is to look at the products 
\begin{equation*}
    \bbracket{z_{s_1,r_1}\astt z_{s_2,r_2}\astt z_{s_3,r_3}}=\bbracket{z_{s_1,r_1}}\cdot\bbracket{z_{s_2,r_2}}\cdot\bbracket{z_{s_3,r_3}}\equiv 0\qquad\qquad \text{ mod }\MD.
\end{equation*}
Again, a computer check ($S,R\leq10$) tells us that this system in general has no solutions.\\

We noted in Section \ref{subsection:paritymzv} that the bi-brackets of length $2$ satisfy the parity relation. This was a result of the solubility of the system generated by stuffle and shuffle products. Suspicion \ref{suspicion:stsh1} and \ref{suspicion:stsh2} both imply this parity relation for arbitrary length.

\begin{suspicion}[Parity relation for bi-brackets]
Let $\bbracket{s_1,\dots,s_l\\r_1,\dots,r_l}$ be a bi-bracket of upper weight $S$ and lower weight $R$. Let us assume that $S+R+l$ is odd. Then $\bbracket{s_1,\dots,s_l\\r_1,\dots,r_l}$ can be expressed as a $\Q$-linear combination of bi-brackets with length less than $l$.
\end{suspicion}

\section*{Computational methods}
\addcontentsline{toc}{section}{Computational methods}
The results in this thesis were based on calculations made by the computational system \textit{Magma} \cite{magma}. This system had neither implementations for calculating multiple zeta values and their related stuffle and shuffle product, nor for calculating mono- and bi-brackets, and so they had to be made manually. If one is interested in these applications, or in the methods used for these calculations, feel free to contact me.

\phantomsection
\addcontentsline{toc}{section}{Bibliography}

% \include{TODO}
% =-=-=-=-=-=-=-=-=-=-=-=-=-=-=-=-=-=-=-=-=-=-=-=-=-=-=-=-=-=-=-=-=

\end{document}